\tikzset{
%Define standard arrow tip
>=stealth',
%Define style for different line styles
help lines/.style={dashed, thick},
axis/.style={<->},
important line/.style={thick},
connection/.style={thick, dotted},
}
\newtheorem{theorem}{Theorem}[section]
\newtheorem*{theorem*}{}
\newtheorem{result}[theorem]{Result}
\newtheorem{proposition}[theorem]{Proposition}
\newtheorem{lemma}[theorem]{Lemma}
\newtheorem{corollary}[theorem]{Corollary}
\theoremstyle{definition}
\newtheorem{definition}[theorem]{Definition}
\theoremstyle{remark}
\newtheorem{remark}[theorem]{Remark}
\numberwithin{equation}{section}
\newcommand{\LL}{L}
\newcommand{\LLs}{L^{\sharp}}
\newcommand{\Ss}{\mathcal{S}}
\newcommand{\D}{\mathcal{D}}
\newcommand{\E}{\mathcal{E}}
\newcommand{\tr}{\mathrm{Tr}}
\newcommand{\supp}[1]{{\text{supp}}({#1})}
\newcommand{\ints}{\mathbb{Z}}
\newcommand{\reals}{\mathbb{R}}
\newcommand{\comps}{\mathbb{C}}
\newcommand{\nats}{\mathbb{N}}
\newcommand{\dif}{\mathrm{d}}
\newcommand{\n}{\vec{n}}
\newcommand{\dist}{\mathrm{dist}}
\newcommand{\bfx}{{\bf{x}}}
\newcommand{\bfy}{{\bf{y}}}
\newcommand{\bfu}{{\bf{u}}}
\newcommand{\x}{{\bf{x}}}
\newcommand{\bfg}{\mathbf{g}}
\newcommand{\bfh}{\mathbf{h}}
\newcommand{\sfd}{\mathsf{d}}
\newcommand{\bfe}{\mathbf{e}}
\newcommand{\bfxi}{\boldsymbol{\xi}}
\newcommand{\bfeta}{\boldsymbol{\eta}}
\newcommand{\sfg}{\mathsf{G}}
\newcommand{\ran}{\mathrm{ran}}
\newcommand{\tp}{\tilde{p}}
\renewcommand{\t}{\mathrm{t}}
\newcommand{\N}{\mathcal{N}}
\newcommand{\cB}{\mathcal{B}}
\newcommand{\Op}{\mathrm{Op}}
\numberwithin{equation}{section}
\title{On a polyharmonic Dirichlet problem and boundary effects in surface spline approximation
\thanks{ \emph{2000 Mathematics
   Subject Classification:} 35C15, 	35J58, 	41A25, 46E35  }
\thanks{\emph{Key words:}
surface spline, layer potential, Dirichlet problem}}
\author{Thomas C. Hangelbroek\thanks{ Research supported by grant DMS-1413726 from the National
    Science Foundation.}
 }
\begin{document}
\maketitle
\begin{abstract}
For compact domains with smooth boundaries,  
we present an approximation scheme
for surface spline approximation that delivers precise 
$L_p$  approximation orders on well known smoothness spaces. 
This scheme overcomes the boundary 
effects when centers are placed with greater density near to the boundary.
It owes its success to an integral identity using a minimal number of boundary layer potentials, 
which, in turn is derived from the boundary layer potential solution to 
the Dirichlet problem for the $m$-fold Laplacian.
 Furthermore, his integral identity is shown to  be the ``native space extension'' of  the target function.
\end{abstract}
 
\section{Introduction}\label{S:intro}
In this paper we consider three seemingly unrelated problems. 
The first -- the solution of the polyharmonic Dirichlet problem
with boundary layer potentials  -- is a basic problem in potential theory and  elliptic PDE.
The second seeks a linear operator that provides smooth extensions to functions defined on bounded domains.
The third treats the complication of the boundary in surface spline approximation -- this is a fundamental problem for
kernel based approximation and is of prime importance for treating scattered data.

All three problems involve the fundamental solution 
$\phi_{m,d}:\reals^d\to \reals$ to the $m$ fold Laplacian $\Delta^m$ on $\reals^d$, for $m>d/2$. 
Before expanding on the connection between them, we 
give a more detailed explanation.

\paragraph{Problem 1: Surface Spline Approximation}
Radial basis function (RBF) approximation 
involves approximating
a target function $f$
by a 
linear combination of translates of a fixed, radially symmetric function (the radial basis function) 
$\phi:\reals^d \to \reals$ 
sampled from a finite point-set $\Xi\subset \reals^d$.
The approximant takes the form
$s_{f,\Xi}(x)  = \sum_{\xi\in\Xi} A_{\xi} \phi(x-\xi)$, where the coefficients $(A_\xi)_{\xi\in\Xi} \in \reals^{\Xi}$ are to be determined.
(For technical reasons, one often permits the addition of an auxiliary, low-degree polynomial term -- we ignore this for now, but it is 
expanded upon later on.)

A basic family of radial basis functions is the family of {\em surface splines}, 
which are (up to a constant multiple) the fundamental solutions 
$\phi_{m,d}$ of the $m$ fold Laplacian in $\reals^d$.
We consider  the approximation power of RBF approximation with surface splines over bounded regions:
when $\Omega$ is a bounded domain, $f:\Omega\to \reals$ and $\Xi\subset \Omega$.
Specifically, we wish to determine precisely the degradation
 of error estimates for surface spline approximation 
 in the presence of the boundary, and how this may be overcome.
  A detailed explanation of these ``boundary effects'' can be 
 found in Section \ref{SSapprox_background}.

\paragraph{Problem 2: Norm Minimizing Extension}
For a bounded  region $\Omega\subset\reals^d$ and $f\in W_2^{m}({\Omega})$, 
we wish to find an extension $f_e:\reals^d\to \comps$ that is best in the sense that it has
a minimal $m$th semi-norm 
$$|f_e|_{D^{-m}L_2}  := 
\left(\sum_{|\alpha| = m} \begin{pmatrix} m\\ \alpha\end{pmatrix} \int_{\reals^d}|D^{\alpha} f_e(x)|^2 \dif x\right)^{1/2}.$$
This is the $m$th Sobolev semi-norm, but in this context it is often called the Beppo-Levi semi-norm.
The Beppo-Levi space
$D^{-m}L_2 (\reals^d)= \{f\in W_{2,loc}^m(\reals^d)\mid \ |f|_{D^{-m}L_2}<\infty\} $ is a reproducing kernel semi-Hilbert space;
it and the above  extension have been studied in \cite{D2}. 
There, Duchon has shown that  
$f_e$ can be expressed as a convolution $f_e = \mu_f * \phi_{m,d}+p$, where $\mu_f$ is a distribution supported in
$\overline{\Omega}$ and $p$ is a polynomial of degree at most $m-1$. 

In \cite{MJ3}, Johnson demonstrates that the linear map $f\mapsto \mu_f$, is bounded from $W_2^m(\Omega)$  to $W_2^{-m}(\reals^d)$
and from  the Besov space $B_{2,1}^{m+1/2}(\Omega)$ to $B_{2,\infty}^{1/2-m}(\reals^d)$.
The mapping properties of this extension operator  have  been exploited in scattered data fitting problems, starting with
\cite{D2}, but continuing in \cite{MJ3,MJ4} and \cite{loehndorf}.
The more general notion of a native space extension operator for a conditionally positive definite kernel, which this is an example of,
has been introduced and studied in \cite{Schaback_NS}
 
 The goal here is to identify the distribution $\mu_f$
 explicitly in terms  of $f$ on $\overline{\Omega}$: 
 namely, in terms of values in $\Omega$ and boundary data on $\partial \Omega$. To date the only 
 case where this is known is when $m=2$ on the  disk $\Omega = B(0,1)$ in $\reals^2$ \cite{MJ1}.
 
\paragraph{Problem 3: Layer Potential Solution of a Dirichlet Problem}
For a compact region $\Omega\subset\reals^d$,  we consider the homogeneous $m$-fold Laplacian with non homogeneous boundary conditions:
\begin{equation}\label{Dirichlet}
\begin{cases} \Delta^m u(x) = 0,\quad \text{for $x\in \Omega$};\\
\lambda_k u = h_k,\quad \text{for }k=0\text{ to }m-1.\\
\end{cases}
\end{equation}
We employ the boundary differential operators
\begin{equation}
\label{bdry_operator_def}
\lambda_k :=
\begin{cases} 
\tr \Delta^{\frac{k}{2}},&\ \text{for even $j$;}\\
D_{\vec{n}} \Delta^{\frac{k-1}{2}},&\ \text{for odd $j$.}
\end{cases}
\end{equation}
(here $\tr:C(\overline{\Omega})\to C(\partial \Omega)$ is the restriction to the boundary and $\n$ is the outer unit normal to the boundary). 
Roughly, our goal is to provide a solution using $m$ boundary layer potentials
$u(x) = \sum_{j=0}^{m-1} \int_{\partial \Omega} g_j(\alpha) \lambda_{j,\alpha} \phi_{m,d}(x-\alpha)\, \dif\sigma(\alpha)+p(x)$
with an extra polynomial term $p$; 
in short, we wish to find 
 auxiliary functions $g_0\dots g_{m-1}$ given boundary data
 $h_0\dots h_{m-1}$.

\paragraph{The connection between the problems}
The solution of each of these three problems hinges on the ability to represent a function 
$f:\overline{\Omega} \to \comps$ 
with a combination of integrals of the form
\begin{equation}\label{basicrep}
f(x) = \int_{\Omega} \Delta^m f(\alpha) \phi_{m,d}(x-\alpha) \dif \alpha + \sum_{j=0}^{m-1}\int_{\partial \Omega} N_jf(\alpha) \lambda_{j,\alpha} \phi(x - \alpha)\dif \sigma(\alpha) + p(x).
\end{equation}

This representation indicates precisely the distribution $\mu_f$ used in the norm minimizing Sobolev extension (Problem 2). 
A special example of its use is to provide the solution to the Dirichlet problem (Problem 3); 
in turn, establishing the boundary layer solution of (\ref{Dirichlet}) yields almost directly the formula (\ref{basicrep}).

Finally, a certain discretization of the  representation yields an approximation scheme which conveniently addresses the boundary effects. 
This scheme replaces the kernels  appearing in (\ref{basicrep}), namely $\phi_{m,d}(x,\alpha)$ and $\lambda_{j,\alpha} \phi_{m,d}(x -\alpha)$, 
 by new kernels:
$k(x,\alpha)$ and $k_j(x,\alpha)$,
where  $k(x,\alpha) = \sum_{\xi\in\Xi} a(\alpha,\xi) \phi_{m,d}(x - \xi) $ and
$k_j(x,\alpha) = \sum_{\xi\in\Xi} a_j(\alpha,\xi) \phi_{m,d}(x - \xi) $. 
The approximant
\begin{equation}\label{scheme}
T_{\Xi}f(x) = \int_{\Omega} \Delta^m f(\alpha) k(x,\alpha) \dif \alpha + 
\sum_{j=0}^{m-1}\int_{\partial \Omega} N_j f(\alpha) 
k_j(x, \alpha)\dif \sigma(\alpha)
+p(x)
\end{equation}
generates an RBF approximant
and provides
precise approximation orders for surface spline approximation (Problem 1).
Moreover,  on certain point sets $\Xi$ it successfully treats the boundary effects by permitting rates of convergence 
matching those of  the 
boundary-free setting.
Such a scheme has been introduced in \cite{Hdisk} to treat the problem when $\Omega$ is the disk in $\reals^2$, but 
earlier
schemes of this sort have been used in \cite{DeRo,DyRo}. 
Discretizations of the form $k_j$ and $k$ were initially introduced in
\cite{DynLevinRippa}.

%%%%%%%%%%%%%%%%%%%%%%
%
%
%Approximation order background
%
%
%%%%%%%%%%%%%%%%%%%%%%

\subsection{Background on boundary effects for surface spline approximation}\label{SSapprox_background}
Boundary effects for surface spline approximation (as well as other RBF methods) 
can often be observed numerically \cite{FDWC,loehndorf,ZhJi}.
They can also be demonstrated analytically, by showing
that  the approximation order from finite dimensional spaces
generate by $\phi_{m,d}$
is prematurely saturated.
The meaning of this statement is explained below.

For $J\in \nats$, let 
$$S_J(\Xi,\phi):= 
\left\{  
\sum_{\xi\in\Xi} A_{\xi}  \phi(\cdot - \xi) +p
 \left| \,p\in \Pi, 
 \quad \forall q\in \Pi, 
  \sum_{\xi \in \Xi} A_{\xi} q(\xi) = 0
\right.
\right\} ;
$$
this is the space generate by $\phi$ and $\Xi$, augmented by $\Pi_J$ (polynomials of degree at most $J$)
with corresponding moment conditions on the coefficients.
The $L_p$ approximation order is defined as $\gamma>0$ so that
$$ 
\dist(f,S_J(\Xi,\phi))_p:=\min_{s\in S_J(\Xi,\phi)} \| f - s\|_{L_p} =O(h^\gamma)$$
where $h$, the {\em fill distance}
\begin{equation}\label{fill_dist}
h:=h(\Xi,\Omega):=\sup_{x\in \Omega} \dist( x,\Xi),
\end{equation}
measures the density of $\Xi$ in $\Omega$. 

The first positive results concerning approximation orders in this setting were
obtained by Duchon. In \cite{D1} and \cite{D2}  it was shown that, 
on domains satisfying an interior cone condition, interpolation of 
a function in $D^{-m}L_2(\reals^d)$
delivers $L_p$ approximation order 
$\gamma_p:= \min(m,m+d/p-d/2)$.
More precisely, 
for the (unique) function 
$I_{\Xi} f\in  S_{m-1}(\Xi,\phi_{m,d})$
which satisfies $I_{\Xi} f\left|_{\Xi}\right. = f\left|_{\Xi}\right.$,
the estimate
$\|f- I_{\Xi}f\|_p \le C h^{\gamma_p} \|f\|_{W_2^m(\Omega)}$
holds.
 This approximation order is illustrated in Figure \ref{Fi:orders}
as a dotted line.

 In \cite{MN}, Madych and Nelson introduced 
interpolation by  surface splines on multi-integer grids,
i.e., where  centers are assumed to be $h\ints^d$ and the domain of $f$ is all of 
$\reals^d$
(in this case $\Xi = h\ints^d$ is not finite and the space  $S(h\ints^d, \phi_{m,d})$ 
consists of convergent infinite 
linear combinations\footnote{Because $\phi_{k,d}$ has global support,
one considers linear combinations generated by a bounded, rapidly decaying
``localization'' 
$\psi = \sum_{j\in \ints^d} a_j \phi_{m,d}(\cdot - j)$  of shifts of $\phi_{m,d}$.}).
 Buhmann demonstrated that 
 interpolation in this setting enjoys substantially larger approximation orders than observed in the
 work of Duchon. 
In  \cite{Buh} it is
shown that interpolation by functions in $S(h\ints^d, \phi_{m,d})$ 
of shifts of $\phi_{m,d}$
 delivers approximation order $2m$ for sufficiently smooth functions.
Other ``free space'' results for surface spline approximation 
were obtained
by  Dyn and Ron \cite{DyRo}, 
Bejancu \cite{BejancuBoundary},
Johnson \cite{JohnsonOvercomingBoundary},
Schaback \cite{SchabackDoubling},
and  
DeVore and Ron \cite{DeRo} -- these show for
various schemes that the approximation order $2m$ can be attained when
the boundary can be neglected (by considering centers that are reasonably sampled throughout $\reals^d$, or 
in a sufficiently large  neighborhood of $\Omega$,  or by considering functions which are compactly supported in $\Omega$ 
or come from
some other (smaller) class of functions for which boundary effects are not an issue). 
This approximation order is illustrated in Figure \ref{Fi:orders}
as a solid, horizontal line.

\begin{center}
\begin{figure}[h]
\centering
\begin{tikzpicture}
\coordinate (y) at (0,7);
    \coordinate (x) at (9,0);
    \draw[axis] (y) node[above] {$\gamma$} -- (0,0) --  (x) node[right] {$\mathit{1/p}$};

      \coordinate (free) at ($0.9*(y)$);
    \coordinate (ub) at ($0.5*(y)$);
    \coordinate (duchon) at ($0.05*(y)$);
     \coordinate (jleft) at ($0.1*(y)$);
    \coordinate (L2) at ($.45*(x)$);
         \coordinate (jcorner) at ($(L2)+.55*(y)$);
    \coordinate (L1) at ($.9*(x)$);
    \coordinate (corner) at ($(L2)+.5*(y)$);
    \coordinate (ubright) at ($(L1)+.6*(y)$); 
    %Draw horizontal upper bound line
    \draw[important line] let \p1=(free), \p2=(L1) in (\p1) node[left] {$2m$} -- (\x2, \y1);
    %Draw Duchon result line (in two stages)
    \draw[dotted] let \p1=(duchon), \p2 =(corner) in (\p1) node[left] {$m - d/2$} -- (\x2, \y2);
    \draw[dotted] let \p1=(corner), \p2 =(L1) in (\p1) -- (\x2, \y1);
    %Draw trace line
    \draw[dashed] let \p1=(ub), \p2=(ubright) in (\p1) node[left] {$m$} -- (\p2);
       %Draw Johnson line
    \draw[dashdotted] let \p1=(jleft), \p2 =(jcorner) in (\p1) node[above left] {$m - \frac{d-1}{2}$} -- (\x2, \y2);
    \draw[dashdotted] let \p1=(jcorner), \p2 =(ubright) in (\p1) -- (\p2);
    %Put L2 marker on x axis
    \draw let \p1=(L2) in (\p1) node[below] {$1/2$};
    %Put L1 marker
    \draw let \p1=(L1) in (\p1) node[below] {$1$};
  \end{tikzpicture}
  \caption{Graphs of the boundary-free $L_p$ approximation order (solid), 
  Johnson's upper bound on the $L_p$ approximation order in the presence of the boundary (dashed) 
  and Duchon's $L_p$ approximation order (dots).
  The current best $L_p$ approximation order in the presence of a smooth boundary
  is the dash-dotted broken line. }\label{Fi:orders}
  \end{figure}
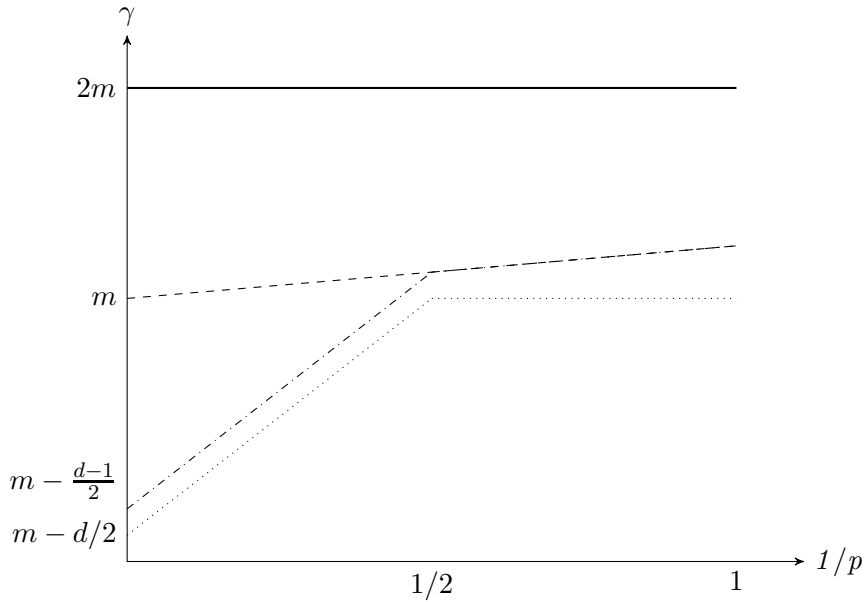
\end{center}  

The inverse result of Johnson \cite{MJ2}, 
shows that for  $\Omega = B $, the unit ball in $\reals^d$, $\Xi \subset (1-\frac12 h)B$ and for any $J$,
$1\le p\le \infty$, there exists $f \in C^{\infty}(\overline{B})$ such that
\begin{equation}\label{JUB}
\mathrm{dist}(f,S_{J}(\Xi,\phi_{m,d}))_p\ne o(h^{m+1/p}).
\end{equation}
(This result holds regardless of the polynomial space $\Pi_{J}$, including $\Pi_{-1} = \{0\}$.)
This upper bound on the approximation order in the presence of a boundary is
illustrated in Figure \ref{Fi:orders}
as a dashed line.

The current state of the art for surface spline approximation with scattered
centers in bounded domains comes from interpolation by functions in
$S_{m-1}(\Xi,\phi_{m,d})$. 
We separate this into two cases, depending on the parameter $p$. 
For $\Omega \subset \reals^d$ having sufficiently smooth boundary and 
for sufficiently smooth $f$ (specifically for $f$ in the Sobolev space
$W_2^{m+1}(\reals^d)$ when $p=1$ and for $f$ in the Besov space
$B_{2,1}^{m+1/p}(\reals^d)$ when $1<p\le2$),
the rate
\begin{equation}\label{MJp<2}
\|f-I_{\Xi}f\|_p = O(h^{m+1/p})
\end{equation} 
holds for $1\le p\le 2$ -- this is to be found in \cite{MJ3}. By the upper bound (\ref{JUB}), this is
the best possible approximation order. On the other hand, in \cite{MJ4} it has been shown that, 
for $p>2$ and for sufficiently smooth $f$ 
(for $f$ in the Besov space $B_{2,1}^{m+1/2}(\reals^d)$), 
\begin{equation}
\|f-I_{\Xi}f\|_p = O(h^{\gamma_p+1/2})\label{MJp>2}
\end{equation}
holds. 
This result, for the case $p=2$, has recently  been studied again \cite{loehndorf} using techniques from elliptic PDEs, and confirming the
saturation order on $B_{2,1}^{m+1/2}$.

Thus, there is a  gap between the
best approximation order for $p>2$ and Johnson's upper bound (\ref{JUB}).
This situation is reflected in Figure \ref{Fi:orders}.
Moreover, the classes of functions for which (\ref{MJp>2}) and (\ref{MJp<2})
hold -- except when $p=2$ -- are smaller than one would expect (in particular, for (\ref{MJp<2}) where $1\le p<2$, smoothness is measured
in the stronger $L_2$ norm, rather than the weaker $L_p$ norm).

In this article, we show that the convergence rate $\dist(f,S_{m-1}(\Xi, \phi))_p = \mathcal{O}(h^{m+1/p})$ holds for  target functions $f\in B_{p,1}^{m+1/p}(\Omega)$ when $1<p<\infty$,
 and slightly smaller spaces when $p=1,\infty$.

%
%%%%%%%%%%%%%%%%%%%%%%%
%%
%%
%%Overview
%%
%%
%%%%%%%%%%%%%%%%%%%%%%%
%
\subsection{Overview}
The goal of this paper is to demonstrate that the representation (\ref{basicrep}) holds, 
to study regularity properties of auxiliary functions $g_j$,
to use this to attack the boundary effects in surface spline approximation with the aid of the scheme (\ref{scheme}),
and to give an explicit representation of the Beppo-Levi extension operator.

The basic strategy of using the solution of (\ref{Dirichlet}) to obtain (\ref{basicrep}) is introduced in Section \ref{S:ID}. 
This section contains the main theorems concerning the solution of (\ref{Dirichlet}), the validity of the identity (\ref{basicrep})
and the regularity of the boundary operators $N_j$ 
(although the more involved proofs are given later).

 Mapping properties of the boundary layer operators 
 used in (\ref{basicrep}) and in the solution of the Dirichlet problem are 
 studied in Section \ref{boundary_layer_potential_operators}. In particular, 
 the regularity of such operators ``up to the boundary''  is studied here,
 as well as jump conditions and transposition of the boundary operators.
 These results may be well known to some readers (many can be found in \cite{Agmon} for instance); they are
 included here to keep the manuscript self-contained and because these results are used in later sections.
 
 Section \ref{Soln} treats the solution of (\ref{Dirichlet}) by a boundary integral method adapted from a technique treating
  the biharmonic problem used in (\cite{CD,ChZh}). 
 It  recasts the problem initially  as an  integral equation which can be solved by providing a bounded inverse
 to an integral operator $L$ acting between reflective Banach spaces.
 
Section \ref{Section psdo} uses the theory of pseudodifferential operators to analyze the problem. It calculates the
(multi-ordered) principal
symbol of the integral operator described in  Section \ref{Soln} and shows that it is elliptic. This is used to determine mapping
properties of $L$, as well as to show that $L$ has closed range.
 
Section \ref{proofs} gives proofs of the main theorems (which have been stated in Section \ref{S:ID}).

In Section \ref{S:SSA} we present and study the surface spline approximation scheme which treats functions defined on bounded regions using
  $S_{m-1}(\Xi, \phi_{m,d})$. 
 The section is devoted to establishing the approximation power of this scheme, and to showing how oversampling near the boundary
 can overcome boundary effects.
 
In Section \ref{S:Ext} we discuss how (\ref{basicrep}) provides the extension which minimizes Sobolev semi-norm.

%%%%%%%%%%%%%%%%%%%
%
%:Subsection notation
%
%%%%%%%%%%%%%%%%%%%
\subsection{Notation and background}
\label{notation}
\paragraph {Types of domains considered:} We consider bounded, connected, open $\Omega\subset\reals^d$ 
having  a $C^{\infty}$ outer normal, which we denote by $\vec{n}:\partial \Omega \to \mathbb{S}^{d-1}$.
In a  neighborhood 
$
\N(\partial \Omega) 
:=\partial \Omega+B(0,\epsilon_0)$ of the boundary 
of 
$\Omega$ 
we can describe $\partial \Omega$ as the zero set 
 of a ``signed distance function'' 
$\rho:
\N(\partial \Omega)   
\to (-\epsilon_0, \epsilon_0)
$.
This means that  for all
$
 x\in \N(\partial \Omega), $
there is a unique $ \gamma(x)\in \partial \Omega$
with
$\dist(x,\partial \Omega) =|x-\gamma(x) | = |\rho(x)|$ 
which satisfies $ \rho(x)<0 $ if and only if $   x\in \Omega.$

By extending the normal vector field to the neighborhood of the boundary (writing 
$\vec{n}:\N(\partial \Omega) \to \mathbb{S}^{d-1}$ for the extension) via 
$\vec{n}(x) = \vec{n}(\gamma(x))$, we can smoothly extend
the boundary differential operators (\ref{bdry_operator_def}) to $\N(\partial \Omega)$ as well:
\begin{equation}\label{extended_operator_def}
\Lambda_j f(x):= 
\begin{cases}
 \Delta^{\frac{j}{2}}f(x)&\quad \text{for even $j$,}\\
\sum_{\ell=1}^d \vec{n}_{\ell}(x) \frac{\partial}{\partial x_\ell} \Delta^{\frac{j-1}{2}}f(x) &\quad \text{for odd $j$.}
\end{cases}
\end{equation}
From this it follows that the boundary operators defined in (\ref{bdry_operator_def})  
are simply the composition of
these differential operators with the trace operator:  
$\lambda_j = \tr \Lambda_j$.

%%%%%%

\paragraph{Normal and tangential coordinates} 
If $O'\subset \reals^{d-1}$ and $U'\subset \partial \Omega$, with $\tilde{\Psi}:O'\to U'$ a diffeomorphism,
we can generate, for $\epsilon<\epsilon_0$,  tangential and normal coordinates in $U= U'+B(0,\epsilon)\subset \N(\partial \Omega)$
via 
\begin{equation}\label{explicit_normal_tangential}
\Psi:O\to U:\bfx=(x',x_d) \mapsto \tilde{\Psi}(x') + x_d \vec{n}\bigl(\tilde{\Psi}(x')\bigr).
\end{equation}
Here $O = O'+B(0,\epsilon)$.
We define %
smooth vector fields
$\bfe_j(\bfx) = \frac{\partial}{\partial x_j}\Psi (x_1,\dots,x_d)$, for $1\le j\le d$. 
The Gram matrix (of the Jacobian $D\Psi$ of $\Psi$), $\sfg: O\to \mathrm{GL}(d,\reals)$, 
is defined via its entries $\sfg_{i,j} = \langle \bfe_i(\bfx), \bfe_j(\bfx)\rangle$. Its inverse we denote by 
$\sfg^{-1} = \left(\sfg^{i,j}\right)_{i,j}$.

We note that the signed distance $\rho(\Psi(x)) = x_d$, so $(\nabla \rho) (\Psi(\bfx)) = \vec{n}(\tilde{\Psi}(x')) = \bfe_d(\bfx)$.
For fixed $t\in (-\epsilon_0,\epsilon_0)$, 
let $\partial \Omega_t = \{u\in \reals^d \mid \rho(u)  = t\}$.
 When $x_d =t$, 
 the image of $\Psi$ is the level set 
$M_t :=  \partial \Omega_t\cap U = \Psi(O'\times \{t\}).$ 
Since  $(\nabla \rho) (\Psi(\bfx)) $ is normal to $M_t$ at $\Psi(\bfx)$,
we have that $\langle \bfe_j(\bfx), \bfe_d(\bfx)\rangle = \delta_{j,d}$. 
Consequently, $\sfg$ and $\sfg^{-1}$ have a two block structure 
(with a $1\times 1$ block and another  of size $d-1\times d-1$). In short, we write 
$$
\sfg
(\bfx) = \begin{pmatrix} \sfg_{d-1}(\bfx) & 0 \\ 
0& 1\end{pmatrix}\qquad \text{and}\qquad \sfg^{-1}(\bfx) = \begin{pmatrix} 
\bigl(\sfg_{d-1}(\bfx)\bigr)^{-1} & 0 \\ 
0& 1\end{pmatrix}.
$$

\paragraph {Distributions:} 
For an open set $U\subset \reals^d$ we denote the space of test functions supported on compact subsets
of $U$ by $\D(U) = C_{0}^{\infty}(U)$. The space of distributions on $U$ is denoted $\D'(U)$.
Similarly, the space of $C^{\infty}$ functions is $\E(U) = C^{\infty}(U)$ and the space of compactly supported distributions is $\E'(U)$, while
the space of Schwarz functions is denoted $\Ss(\reals^d)$ and the space of tempered distributions is $\Ss'(\reals^d)$.

For  an open set $U\subset \partial \Omega$, 
$\D(U)$ and $\E(U)$ retain the same meaning as do $\D'(U)$ and $\E'(U)$.
Because $\partial \Omega$ is compact,
$\E(\partial \Omega)$ and $\D(\partial \Omega)$ coincide,
as do  $\E'(\partial \Omega)$ and $\D'(\partial \Omega)$.
Because $\partial \Omega$ is endowed with the surface measure $\sigma$ 
there is a natural identification between locally integrable real-valued functions and real distributions via the pairing
$\langle g, \phi \rangle = \int_{\partial \Omega} g(x) \phi(x) \dif \sigma(x)$ 
(valid for all $\phi \in \D(\partial \Omega)$). For an operator on distributions, we use $()^{\t}$ to indicate the transpose with respect to this pairing,
so $\langle M^{\t} T,f\rangle = \langle T,M f \rangle$.

We identify $C^{\infty}(U,\reals^m)$ and 
$C_c^{\infty}(U,\reals^m)$ with $\bigl(\E(U)\bigr)^m$ and $\bigl(\D(U)\bigr)^m$, respectively. 
The duals are
$(\mathcal{D}'(U))^m$ and $(\mathcal{E}'(U) )^m$.

\paragraph{Pullback}
For  open sets 
$U,O\subset \reals^d$ and a smooth diffeomorphism 
$\Psi:O\to U$,  the {\em pullback} 
of a smooth function is $\Psi^*(g)=  g\circ \Psi$.
The pullback extends continuously as a map between $\E'(U) \to \E'(O)$  and $\D'(U) \to \D'(O)$.
See \cite{Horm1}).

The pullback of the surface measure $\delta_{\partial \Omega}:g\mapsto \int_{\partial\Omega} g(x) \dif \sigma(x)$
can be computed by writing $\partial \Omega$ as the zero set of the signed distance function $\rho:\reals^d \to \reals$. 
 We have
 $\rho^* \delta = \delta_{\partial \Omega}$ (cf. \cite[Theorem 6.1.5]{Horm1}).
 If $\Psi:O\to U$ maps $O\cap \reals^{d-1}$ to $U\cap \partial \Omega$ (for instance, if we use tangential and normal coordinates)  
 then it follows that 
 $\Psi^* \delta_{\partial \Omega} = (\rho\circ \Psi)^* \delta = \delta_{\reals^{d-1}}$, the standard Lebesgue measure on $\reals^{d-1}\times \{0\}$.
Distributions of the form $f\cdot \delta_{\partial \Omega} $, 
	supported in $U$ are transformed according to
		$\Psi^* \left(f\cdot \delta_{\partial \Omega}\right) = \left( \Psi^*f \right) \cdot\delta_{\reals^{d-1}} .$

\paragraph{Coordinate change}
By conjugating with $\Psi^*$, we  express
an operator $A:\D'(U)\to \D'(U)$ {\em in coordinates} on $O$ as $A^{\Psi}$.
Thus, we write
$A^{\Psi} = \Psi^* A(\Psi^*)^{-1}$.

For $f\in C^{\infty}(U)$ let $F = f\circ\Psi$. 
Then
$\nabla^{\Psi} F = \sum_{k=1}^d \sum_{j=1}^d \sfg^{jk} \frac{\partial F}{\partial x_j}  \mathbf{e}_k$.
The Laplace operator in coordinates is 
$$  
\Delta^{\Psi}F(x) =
\sum_{j,k=1}^{d} 
\frac{1}{\sqrt{\det{\sfg} }}
\frac{\partial}{\partial x_j} 
\left( \sfg^{k,j}\sqrt{\det{\sfg}}  \frac{\partial }{\partial x_k} 
F (x)
\right). 
$$

\paragraph{Operators in normal and tangential coordinates}
For $\bfu = \Psi(\x)\in \partial \Omega_t$, the unit normal  is $\bfe_d(\bfx)$.
The vector fields $\bfe_1|_{ \partial \Omega_t},\dots,\bfe_{d-1}|_{ \partial \Omega_t}$, which lie tangent to $ \partial \Omega_t$,  
have corresponding Gram matrix  
$\sfg_{d-1}|_{M_t}$.
The Laplace--Beltrami
operator $\Delta_{t}$ for $\partial \Omega_t$ is given in coordinates by
$${\Delta_{t}}^{\Psi} F (\bfx) = 
\sum_{j,k=1}^{d-1} 
\frac{1}{\sqrt{\det{\sfg_{d-1}(\bfx)} }}
\frac{\partial}{\partial x_j} 
\left( \sfg^{k,j}(\bfx)\sqrt{\det{\sfg_{d-1}(\bfx)}}  \frac{\partial }{\partial x_k}F(\bfx)\right).$$
From these observations, it follows that the Laplacian can be decomposed as:
%%%%%%%%%%%%
%:E Loc_Lap
%%%%%%%%%%%%
\begin{equation}
\label{Loc_Lap}
\Delta^{\Psi} F
=
{\Delta_{t}}^{\Psi} F
+ 
\frac{\partial^2}{\partial x_d^2} F
+
\mu(\bfx) \frac{\partial}{\partial x_d}F
\end{equation}
 with 
 $
 \mu(\bfx):= \frac{1}{\sqrt{\det \sfg(\bfx)}} \frac{\partial }{\partial x_d}\sqrt{ \det \sfg(x)} =  \Psi^* (\mathrm{div} \vec{n})\in C^{\infty}(O).$

 The extension of the normal derivative operator applied to a distribution $f\in \D'(U)$ obeys
	\begin{equation}\label{normtrans}
		\Psi^* D_{\vec{n}} f = \frac{\partial}{\partial x_d} \Psi_* f .%+ \mu \Phi_*T
	\end{equation}
	This follows because 
	$ \Psi^*D_{\vec{n}} f  
	=\langle \vec{n}(\Psi (\cdot)),\nabla^{\Psi} \Psi^*f \rangle
	=\langle \mathbf{e}_d,\nabla^{\Psi} \Psi^*f \rangle
	$.
Likewise, the transpose of the normal derivative 
$D_{\vec{n}}^{\t}f=-\nabla( \vec{n} f) = -\langle \vec{n}, \nabla f\rangle - (\mathrm{div} \vec{n})f$
satisfies
	\begin{equation}\label{normadjointtrans}
		\Psi^* D_{\n}^{\mathrm t} f = -\frac{\partial}{\partial x_d} \Psi^* f - \mu \Psi^*f.
	\end{equation}

\paragraph{Fourier transform} For  $f\in L_1(\reals^d)$, we define $\widehat{f} (\xi) = \int_{\reals^d} f(x) e^{-i\langle x,\xi\rangle} \dif \xi$. This is extended to tempered distributions $\mathcal{S}'(\reals^d)$ in the usual way. For $f \in \mathcal{S}'(\reals^d)$ and $g\in \mathcal{S}(\reals^d)$, we have the usual Plancherel theorem 
$\langle f, g\rangle = \langle \widehat{f}, \widehat g\rangle$.

\paragraph{Smoothness spaces}
For $1\le p<\infty$ and $k\in \nats$, we denote the standard Sobolev space over $\Omega$ by $W_p^k(\Omega)$.  When $p=\infty$, we use
the standard space $C^k(\overline{\Omega})$, of functions having continuous $k$th order derivatives up to the boundary of $\Omega$.
For non-integer orders,
we consider two main extensions.

For $s\in (0,\infty)$, $1\le p<\infty$ and $1\le q\le \infty$, the Besov space $B_{p,q}^s(\Omega)$ is the real interpolation space 
$[W_p^m(\Omega),W_p^k(\Omega)]_{\theta,q}$ with $\theta = \frac{s-k}{m-k}$.  When $p=\infty$ and $s\in (0,\infty)\setminus \nats$, 
we consider  $C^{s}(\overline{\Omega}) $ the H{\"o}lder space; it is well known that 
$C^{s}(\overline{\Omega}) = B_{\infty,\infty}^s(\Omega) = [C^m(\Omega),C^k(\Omega)]_{\theta,\infty}$ where $m,k\in \nats$ and $\theta= \frac{s-k}{m-k}$.
See \cite{Trieb} for background and further references on Besov and H{\"o}lder spaces.

For $1< p<\infty$  and $s\in \reals$ we define the
 the Bessel potential space
$H_p^s(\reals^d)$
as 
$$H_p^s(\reals^d)
:= 
\{f\in \mathcal{S}'(\reals^d)\mid \bigl((1+|\cdot|^2)^{s/2} \widehat{f}\bigr)^{\vee}\in L_p(\reals^d)\}.
$$
It is the pre-image under the Bessel potential $J^s = (1-\Delta)^{s/2}$ of $L_p(\reals^d)$ for
and 
has  norm
$$\|f\|_{H_p^s} := \left\| \bigl((1+|\cdot|^2)^{s/2} \widehat{f}\bigr)^{\vee}\right\|_p = \|J^s f\|_p.$$
When $s=k$ is a non-negative integer, this coincides with the standard Sobolev space $W_p^k(\reals^d)$. 
Furthermore, these are particular examples of Triebel-Lizorkin spaces, namely $H_p^s =F_{p,2}^s$.
See \cite[1.3.2]{Trieb} and references therein for background.

We denote the space of compactly supported distributions in $H_p^s(\reals^d)$ (resp., $W_p^k(\reals^d)$) by $H_{p,c}^s(\reals^d)$ (resp., $W_{p,c}^k(\reals^d)$).
Likewise, $H_{p,loc}^s(\reals^d) = \{f\in \mathcal{D}'(\reals^d)\mid (\forall \psi\in \mathcal{D}(\reals^d)) f\psi \in H_p^s(\reals^d)\}$,
and $W_{p,loc}^s(\reals^d)$ has the obvious modification.

Of special importance is the fact that, for all $s\in \reals$, pointwise multiplication by smooth functions is continuous: for every $s,p$, there is a constant $C$ and an integer $m\in \nats$
so that 
if $f\in H_p^s(\reals^d)$ and $g\in C^{\infty}$ then
$\|fg\|_{H_s^p} \le C \|g\|_{C^m} \|f\|_{H_p^s}$ (see \cite[Theorem 4.2.2]{Trieb}).
Similarly, for a diffeomorphism $\Phi:\reals^d\to\reals^d$, there is a constant $C$ so that for all $f\in H_p^s$ we have
$\|\Phi^*f\|_{H_p^s} \le C \|f\|_{H_p^s}$. 
It follows that if $K\subset O$ is compact and $\Phi:U\to O$ is a diffeomorphism between open sets in $\reals^d$, 
then there is a constant $C_K$ so that for all $f\in H_p^s$ with support
$\supp{f}\subset K$, the estimate 
$\|\Phi^*f\|_{H_p^s} \le  C_K \|f\|_{H_p^s}$ holds.

By duality $(L_p)' \sim L_{p'}$ (for $\frac1p +\frac1{p'} =1$) and the fact that the operators $J^s$ form a 
1-parameter group on $\Ss'$, the dual of  $H_p^s(\reals^d)$ is
identified with $H_{p'}^{-s}(\reals^d)$ in the sense that
for any $\lambda \in \bigl(H_p^s(\reals^d)\bigr)'$ there is a unique distribution $g\in H_{p'}^{-s}(\reals^d)$
for which $\lambda(f)  = \langle g, f\rangle_{H_{p'}^{-s},H_{p}^{s}}$, where the pairing 
 $\langle g, f\rangle_{H_{p'}^{-s},H_{p}^{s}}$ is the extension by continuity of the above bilinear form. This is  \cite[Remark 7.1.9]{Trieb73}

\paragraph{Smoothness spaces on $\partial \Omega$}
Let $(U_{j},\Phi_{j}:U_j\to O_j\subset \reals^{d-1})$ be an atlas for $\partial \Omega$, 
and let $(\tau_{j})$ be a partition of unity
subordinate to $(U_{j})$.
For $1< p< \infty$, we define the Bessel potential spaces 
$H_p^s(\partial \Omega)$ by way of the norm
$$\|f\|_{H_p^s(\partial \Omega)} := \sum_{j} \|(\Phi_{j}^{-1})^* ({\tau}_j f)\|_{H_p^s(\reals^{d-1})}.$$

%%%%%%%%%%%%%%%%%%	
\paragraph{Pseudodifferential operators:} 
In Sections \ref{Section psdo} %6
 and \ref{proofs} %7
  we make use of the theory of pseudodifferential operators. Key results will be discussed at the beginning of Section \ref{Section psdo}.
   For background on this topic, we direct the reader to 
\cite{Grubb,Horm3,StRay,Tayl,Trev}.

\section{Multilayer representation of functions}\label{S:ID}
In this section we discuss the key identity
\begin{equation}
\label{Main_Rep}
f (x) = \int_{\Omega} \Delta^m f(\alpha) \phi(x-\alpha)\, \dif\alpha 
+
\sum_{j=0}^{m-1} \int_{\partial \Omega} 
g_j(\alpha)
 \lambda_{j,\alpha} \phi(x-\alpha)\, \dif\sigma(\alpha) +
p(x),\ 
\end{equation}
(with $ p\in \Pi_{m-1}$), 
which we later show is valid for sufficiently smooth functions.  
The identity determines $f$ from its $m$-fold Laplacian and $m$ boundary layer potentials 
$V_jg_j (x):= \int_{\partial \Omega} g_j(\alpha)
 \lambda_{j,\alpha} \phi(x-\alpha)\, \dif\sigma(\alpha)$, 
each involving an auxiliary boundary function $g_j$ and  a kernel $ \lambda_{j,\alpha} \phi(x-\alpha)$
obtained by applying the $j$th boundary operator
 to the function (which depends on the {\em order} $m$ and spatial dimension $d$)
\begin{equation}\label{Surf_Spline_Def}
\phi(x)=\phi_{m,d}(x):=
C_{m,d}\begin{cases}
|x|^{2m-d}\log|x|\quad& d\, \text{is even}\\
|x|^{2m-d}\quad&d\,\text{is odd},
\end{cases}
\end{equation}
which is notable for  being a fundamental solution of $\Delta^m$ in $\reals^d$,
cf. \cite[(2.11)]{Poly}. As such, it is in $C^{\infty}(\reals^d\setminus\{0\})$; in fact,
$\Delta^m\phi (x) = 0$ for $x\ne 0$. The extension of the operators $V_j$ to distributions is discussed
in the next section.

By direct differentiation of (\ref{Surf_Spline_Def}), one easily sees that 
\begin{equation}\label{FS_derivative_estimates}
|D^{\beta} \phi(x)| \le \begin{cases}  C_{m,d,\beta}|x|^{2m-d-|\beta|}(\log|x| +1),&\quad |\beta|\le 2m-d\\
 C_{m,d,\beta}|x|^{2m-d-|\beta|},&\quad |\beta|> 2m-d \end{cases}
\end{equation}
 (see  \cite[Claim 5]{HL} and the subsequent discussion).
 A consequence, used throughout this article, concerns convolution of $\phi$ with compactly supported
distributions that annihilate polynomials (such convolutions are well defined, at least on the complement of the support 
of the distribution).
\begin{lemma}\label{conv_with_FS}
Let $L\ge 2m-d$. For a compactly supported distribution $F$ for which $F\perp \Pi_L$  and for $x\notin \supp{F}$
$$|F*\phi (x)|\le C (1+|x|)^{2m-d-L-1}.$$
Here the constant $C$ depends on $F$.
\end{lemma}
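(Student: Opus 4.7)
The plan is to exploit the moment condition $F\perp \Pi_L$ by replacing $\phi(x-\cdot)$ with its degree-$L$ Taylor polynomial in $y$ (centered at the origin) before pairing with $F$. The residual Taylor remainder involves only derivatives of $\phi$ of order $L+1$, and since $L+1 > 2m-d$ the second (purely polynomial) case of (\ref{FS_derivative_estimates}) applies throughout, yielding clean decay with no logarithmic factors to worry about.

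Concretely, fix $R>0$ with $\supp{F}\subset \overline{B(0,R)}$ and consider first the regime $|x|\ge 2R$, so that $|x-ty|\ge |x|/2$ for every $y\in \supp{F}$ and every $t\in[0,1]$. Let $T_L(y;x)$ denote the degree-$L$ Taylor polynomial in $y$ of $y\mapsto \phi(x-y)$ about $y=0$. Since $T_L(\cdot;x)\in \Pi_L$, the hypothesis yields
$$F*\phi(x) \;=\; \langle F_y,\, \phi(x-y)-T_L(y;x)\rangle.$$
Writing the remainder in integral form exhibits $R_L(y;x):=\phi(x-y)-T_L(y;x)$ as a linear combination of terms $y^\beta\int_0^1 (1-t)^L (D^\beta\phi)(x-ty)\,\dif t$ with $|\beta|=L+1$. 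Applying (\ref{FS_derivative_estimates}) at $x-ty$ (valid since $L+1>2m-d$) gives $|D^\beta\phi(x-ty)|\le C|x-ty|^{2m-d-L-1}\le C'|x|^{2m-d-L-1}$.

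To couple this pointwise decay with $F$, one uses that a compactly supported distribution has some finite order $k$, so $|\langle F,\psi\rangle|\le C_F\|\psi\|_{C^k(U)}$ for any fixed open $U\supset \supp{F}$. Differentiating the integral representation of $R_L$ in $y$ by Leibniz only raises the order of the $\phi$-derivative (which therefore continues to exceed $2m-d$), so the same bound persists for $D_y^\alpha R_L$ with $|\alpha|\le k$, giving $\|R_L(\cdot;x)\|_{C^k(U)}\le C|x|^{2m-d-L-1}$ on $|x|\ge 2R$. This yields $|F*\phi(x)|\le C|x|^{2m-d-L-1}$ in this regime.

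For the complementary region $\{|x|\le 2R\}\setminus \supp{F}$, the target factor $(1+|x|)^{2m-d-L-1}$ is bounded below (the exponent is $\le -1$), while $F*\phi$ is smooth off $\supp{F}$; any local bound on this compact remainder region is absorbed into the $F$-dependent constant $C$. The main bookkeeping step is the Leibniz expansion verifying that every $\phi$-derivative arising from differentiating $R_L$ in $y$ stays in the polynomial case of (\ref{FS_derivative_estimates}), but this is automatic once $L+1>2m-d$, so no serious obstacle remains.
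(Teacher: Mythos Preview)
The paper does not supply a proof of this lemma; it is stated as a direct consequence of the derivative estimate (\ref{FS_derivative_estimates}), and your Taylor-remainder argument is precisely the intended mechanism. Your treatment of the large-$|x|$ regime is correct and is the content the paper relies on in its two applications (the radiating condition in Lemma \ref{oneone} and the $L_2$-decay in Section \ref{S:Ext}).

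One remark on the near region: you call $\{|x|\le 2R\}\setminus \supp{F}$ a ``compact remainder region'', but it is not compact --- it is open and can approach $\partial(\supp{F})$. In fact the bound as literally stated can fail there: with $F=D^{\alpha}\delta_0$, $|\alpha|=L+1$, one has $|F*\phi(x)|\asymp |x|^{2m-d-L-1}$, which blows up as $x\to 0$ while $(1+|x|)^{2m-d-L-1}$ stays bounded. So the gap you glossed over is actually a gap in the lemma's wording, not in your method; the estimate should be read as a decay statement for $|x|$ large (or for $x$ bounded away from $\supp{F}$), which is exactly how the paper uses it. Your large-$|x|$ argument proves that version cleanly.
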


Adopting the terminology of \cite[Chapter 2]{LionsMagenes},
we note that for any positive integer $\nu$, the system of boundary operators $(\lambda_j)_{j=0}^{\nu-1}$ forms a  
{\em Dirichlet system} of order $\nu$ (cf. \cite[Definition 2.1]{LionsMagenes})
and that $\bigl\{\Delta^m, (\lambda_j)_{j=0}^{m-1}\bigr\}$ is self adjoint
(cf. \cite[\S 2.5]{LionsMagenes}). 
Indeed, we have  {\em Green's formula}
\begin{equation}\label{GreensFormula}
\int_{\Omega}v(x) \Delta^m u(x)  - u(x)  \Delta^m  v(x) \dif x =
 \sum_{j=0}^{2m-1} \int_{\partial \Omega} \lambda_{j}u(x) \lambda_{2m-j-1} v(x) \dif \sigma(x)
\end{equation}
which follows directly from the divergence theorem and 
holds for a general class of domains $\Omega$ 
(we will be satisfied by considering bounded domains with smooth boundaries)
and for all functions $u,v$ in $C^{2m}(\overline{\Omega})$. 
A consequence of this is {\em Green's representation} (see  \cite[(2.11)]{Poly}) for smooth functions:
\begin{multline}\label{GreensRep}
\int_{\Omega} \Delta^m f(\alpha) \phi(x-\alpha) \dif\alpha + \sum_{j=0}^{2m-1} (-1)^j \int_{\partial \Omega} (\lambda_{j} f)(\alpha) \; \lambda_{2m-j-1,\alpha}\phi(x-\alpha)\, \dif \sigma(\alpha)\\
=\begin{cases} f(x)&x\in \Omega,\\ 0 & x\in \reals^d\setminus \overline{\Omega}.\end{cases}
\end{multline}
This identity determines $f$ from its $m$-fold Laplacian and $2m$ boundary values $\lambda_j f$, with $j=
(0,\dots,2m-1)$.

Note that (\ref{GreensFormula}) involves twice as many boundary terms as (\ref{Main_Rep}).
It is unsatisfactory for our purposes (i.e., producing an approximation operator using scattered translates of the fundamental solution $\phi$).
Although we could attempt to discretize (\ref{GreensFormula}) to obtain an approximation operator  similar to (\ref{scheme}), 
the higher order derivatives of $\phi$ at the boundary  are too singular, and would cause a degradation in the approximation power of the scheme.

To simplify the problem, we may
decompose $f=f_1+f_2$
 into the solution of a homogeneous PDE with inhomogeneous boundary conditions
and a complementary part which vanishes to high order at the boundary.

 The first part, $f_1$, is  the solution of the polyharmonic Dirichlet problem (\ref{Dirichlet})
with boundary values obtained from $f$. 
The second part, $f_2 = f-f_1$, vanishes to $m^{\text{th}}$ order at the boundary and satisfies 
$\Delta^m f_2 = \Delta^m f$. 
The identity (\ref{Main_Rep}) will follow if we can write the polyharmonic part $f_1$ in the form
\begin{equation}\label{BhRep}
	f_1(x) = \sum_{j=0}^{m-1} \int_{\partial \Omega} 
	%N_j f
	g_j(\alpha) 
	\lambda_j\phi(x-\alpha)\, \dif\sigma(\alpha)+ p(x)
\end{equation}
and if we can justify taking higher (up to $2m-1$) order derivatives of $f_1$ at the boundary.  By Green's representation, we apply (\ref{GreensRep}) to obtain
\begin{equation}
	f_2 =
	  \int_{\Omega} \Delta^m f(\alpha) \phi(\cdot -\alpha)\, \dif\alpha
	  	 +
	\sum_{k=0}^{m-1}  (-1)^k
	\int_{\partial \Omega} 
	\lambda_{2m-k-1}f_2 (\alpha) \; \lambda_{k,\alpha} \phi(\cdot-\alpha)
	 \, \dif\sigma(\alpha),\label{RestRep}
\end{equation}
where we have used the fact that  
the lowest order boundary values  of $f_2$ vanish 
(for $j=0\dots m-1$, $\lambda_j f_2 =  \lambda_j f-\lambda_j f_1 = 0$).
To obtain (\ref{Main_Rep}) we show that:
\begin{enumerate}

	\item[[\;A]] solutions of Dirichlet's problem are of the form (\ref{BhRep});
	
	\item[[\;B]]  for sufficiently smooth boundary data, functions of the form (\ref{BhRep}) are smooth near the boundary.
	
\end{enumerate}

Item [A] is the subjection of Sections \ref{boundary_layer_potential_operators} - \ref{proofs}, where we will demonstrate the following theorem.
%%%%%%%%%%%%%
%%%
%:Theorem polyharmonic_dirichlet_solution
%%%
%%%
%%%%%%%%%%%%%
\begin{theorem}\label{polyharmonic_dirichlet_solution}
 For functions $h_0,\dots, h_{m-1}$ with 
$h_k\in 
C^{\infty}(\partial \Omega)
$, there is a function 
$u$ 
satisfying (\ref{Dirichlet}) and having the form (\ref{BhRep}) 
with 
$g_j\in 
C^{\infty}(\partial \Omega)
$ for each $j$. 
Moreover, for each $j=0,\dots, m-1$
and $1<p<\infty$, we have, for $s\ge 0$
$$\|g_j\|_{H_p^{s+j+1-2m}(\partial \Omega)} \le C_{s,p} \max_{k=0\dots m-1} \|h_k\|_{H_p^{s-k}(\partial \Omega)}.$$
\end{theorem}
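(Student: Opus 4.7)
The plan is, following the strategy laid out in Section \ref{S:ID}, to seek a solution in the form
$$u(x) = \sum_{j=0}^{m-1} V_j g_j(x) + p(x), \qquad V_j g_j(x) := \int_{\partial\Omega} g_j(\alpha)\,\lambda_{j,\alpha}\phi(x-\alpha)\,\dif\sigma(\alpha),$$
with $p\in \Pi_{m-1}$. Because $\phi$ is a fundamental solution of $\Delta^m$ and the kernels $\lambda_{j,\alpha}\phi(x-\alpha)$ are smooth off the diagonal, such $u$ automatically satisfies $\Delta^m u = 0$ on $\Omega$, so only the boundary conditions must be imposed. Applying $\lambda_k$ and passing to the trace via the jump relations from Section \ref{boundary_layer_potential_operators} reduces the Dirichlet problem to the boundary integral system
$$\sum_{j=0}^{m-1}(\lambda_k V_j)\,g_j = h_k - \lambda_k p,\qquad k=0,\dots,m-1,$$
which I would write compactly as $L\vec g = \vec h - \vec\lambda p$ with the $m\times m$ operator matrix $L = (\lambda_k V_j)_{k,j}$.

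The main analytic step is to treat $L$ as a Douglis–Nirenberg system of classical pseudodifferential operators on $\partial \Omega$, using the machinery of Section \ref{Section psdo}. Each entry $\lambda_k V_j$ has order $k+j+1-2m$ (the kernel $\phi$ contributes the order $-2m$ from inverting $\Delta^m$, the differential operators $\lambda_k,\lambda_j$ add $k$ and $j$, and restriction to the boundary costs one further order overall). Thus, with the multi-orders $t_j = -(s+j+1-2m)$ and $s_k = -(s-k)$, the natural mapping is
$$L:\ \prod_{j=0}^{m-1} H_p^{s+j+1-2m}(\partial\Omega) \longrightarrow \prod_{k=0}^{m-1} H_p^{s-k}(\partial\Omega),$$
which is precisely the indexing in the theorem. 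The crucial point is to compute the principal symbol matrix $\bigl(\sigma(\lambda_k V_j)(x',\xi')\bigr)_{k,j}$ and check that it is invertible for every $\xi'\neq 0$. This ellipticity reflects the Lopatinskii–Shapiro covering condition for $\bigl\{\Delta^m,(\lambda_j)_{j=0}^{m-1}\bigr\}$: after freezing coefficients and flattening the boundary, the symbol computation reduces to the one-dimensional constant-coefficient problem $(\partial_t^2-|\xi'|^2)^m v=0$ on $\{t<0\}$ with Cauchy data prescribed by the $\lambda_j$, which is uniquely solvable. I expect this symbolic verification to be the main technical obstacle.

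Once ellipticity is established, standard pseudodifferential theory gives a parametrix for $L$, the a priori Douglis–Nirenberg estimate
$$\sum_{j}\|g_j\|_{H_p^{s+j+1-2m}(\partial\Omega)} \le C\Bigl(\sum_{k}\|h_k\|_{H_p^{s-k}(\partial\Omega)} + \sum_{j}\|g_j\|_{H_p^{-M}(\partial\Omega)}\Bigr),$$
and the Fredholm property of $L$ (closed range with finite-dimensional kernel and cokernel, both consisting of $C^\infty$ functions by elliptic regularity). To upgrade this to genuine solvability with the stated bound, I would use the auxiliary polynomial $p\in \Pi_{m-1}$ to absorb $\operatorname{coker}L$: the finite-dimensional map $\Pi_{m-1}\ni p\mapsto (\lambda_k p)_{k=0}^{m-1}$ is chosen so that, after subtracting $\vec\lambda p$, the remaining data $\vec h - \vec\lambda p$ lies in the range of $L$. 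Verifying that this augmented system is surjective — that is, matching $\operatorname{coker}L$ against $\Pi_{m-1}$ — is the second delicate point, and once it is in hand one extracts a bounded right inverse and hence the desired estimate. The smoothness statement $g_j\in C^\infty(\partial\Omega)$ then follows from elliptic regularity by letting $s\to\infty$ and using $C^\infty(\partial\Omega)=\bigcap_{s}H_p^s(\partial\Omega)$.
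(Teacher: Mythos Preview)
Your broad architecture is correct and matches the paper: reduce (\ref{Dirichlet}) to the boundary integral system $L\mathbf{g}=\mathbf{h}$ with $L=(v_{k,j})$, identify $L$ as a Douglis--Nirenberg elliptic system of pseudodifferential operators on $\partial\Omega$ (each $v_{k,j}$ of order $j+k+1-2m$), and use Fredholm theory to invert it on the product Sobolev scales. The smoothness statement for $g_j$ also follows exactly as you say, from elliptic regularity.

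Two points differ substantially from the paper. First, the paper does \emph{not} invoke Lopatinskii--Shapiro for the ellipticity of $L$; it computes the principal symbol $\sigma(v_{k,j})$ explicitly (Section~\ref{BOLC}), factors off diagonal operators built from powers of $\sfd(\mathbf{y},\boldsymbol{\eta})$, and reduces the question to invertibility of a constant Hankel matrix with middle-binomial and double-Catalan entries, whose determinant is $2^{m^2}$ by a cited identity. Your Lopatinskii--Shapiro route is plausible, but note that the complementing condition for $\{\Delta^m,(\lambda_j)\}$ gives Fredholmness of the \emph{BVP}, not directly of the layer-potential matrix $L$; bridging that gap requires Calder\'on-projector machinery you have not described.

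Second, and more seriously, your plan for upgrading the Fredholm estimate to genuine invertibility is a gap. You propose that the polynomial $p$ ``absorbs $\operatorname{coker}L$,'' but offer no mechanism to show that $\Pi_{m-1}\ni p\mapsto(\lambda_k p)_k$ complements $\operatorname{ran}L$; this is not obvious and is not how the paper proceeds. The paper instead augments $L$ to a self-transpose operator $L^{\sharp}$ by adjoining \emph{both} $P$ and $P^{\t}$ (so the side conditions $P^{\t}\mathbf{g}=0$ are imposed), and then proves \emph{injectivity} of $L^{\sharp}$: if $L^{\sharp}(A,\mathbf{g})=0$, the resulting $u=\sum V_jg_j+p$ solves homogeneous Dirichlet problems in both $\Omega$ and $\reals^d\setminus\overline{\Omega}$; interior uniqueness kills $u$ inside, and the moment conditions $P^{\t}\mathbf{g}=0$ force radiating decay that yields exterior uniqueness. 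Jump relations (Corollary~\ref{jump_condition}) then give $g_j=0$ one by one. Surjectivity follows by duality since $(L_{p,s})^{\t}=L_{p',1-(s+2m)}$ is likewise injective and the range is closed. This PDE-uniqueness step---particularly the role of $P^{\t}$ in guaranteeing the radiating conditions---is the genuine content your proposal is missing.
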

\begin{proof} The proof is given in Section \ref{proofs}.\end{proof}
%%%%%%%%%%%%%
%%%
%%%
%%%
%%%
%%%%%%%%%%%%%
We note, in particular, that if $h_k = \lambda_k f$ for some $f\in W_2^{m}(\Omega)$, then the trace theorem states that
$h_k \in H_2^{m-k-1/2}(\partial \Omega)$. 
A consequence of the above theorem 
is
an extension which gives the weak boundary layer solution of (\ref{Dirichlet}) with such data.
\begin{corollary}\label{Dirichlet_corollary}
Suppose $h_k\in H_2^{m- k - 1/2}(\partial \Omega)$ for $k=0\dots m-1$. Then there exist
$p\in \Pi_{m-1}$ and  $g_j\in H_2^{ j+1/2-m} (\partial \Omega)$  for $j=0,1,\dots m-1$, 
so that $u = \sum V_j g_j + p\in W_{2,loc}^m(\reals^d)$ and   $u$ solves (\ref{Dirichlet}).
\end{corollary}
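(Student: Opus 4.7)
The plan is to deduce the corollary from Theorem~\ref{polyharmonic_dirichlet_solution} by a density argument, exploiting that $C^{\infty}(\partial\Omega)$ is dense in $H_2^s(\partial\Omega)$ for every $s\in\reals$. Given $h_k\in H_2^{m-k-1/2}(\partial\Omega)$ for $k=0,\dots,m-1$, I pick sequences $h_k^{(n)}\in C^{\infty}(\partial\Omega)$ converging to $h_k$ in $H_2^{m-k-1/2}(\partial\Omega)$, and invoke Theorem~\ref{polyharmonic_dirichlet_solution} to produce smooth $g_j^{(n)}$ and polynomials $p^{(n)}\in\Pi_{m-1}$ for which $u^{(n)} := \sum_j V_j g_j^{(n)} + p^{(n)}$ solves (\ref{Dirichlet}) with data $h_k^{(n)}$. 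The construction in Section~\ref{proofs} should be linear in the boundary data (a point I would verify from the proof), so the estimate in Theorem~\ref{polyharmonic_dirichlet_solution} applies to the differences. Specializing to $p=2$ and $s=m-1/2$ -- the only choice making $s+j+1-2m$ and $s-k$ match the target regularities on both sides -- yields
\[
\|g_j^{(n)}-g_j^{(\ell)}\|_{H_2^{j+1/2-m}(\partial\Omega)} \le C\max_{0\le k\le m-1}\|h_k^{(n)}-h_k^{(\ell)}\|_{H_2^{m-k-1/2}(\partial\Omega)},
\]
so that $(g_j^{(n)})_n$ is Cauchy with limit $g_j\in H_2^{j+1/2-m}(\partial\Omega)$.

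The next step is to pass to the limit in $u^{(n)}$. Using the mapping properties of the boundary layer operators from Section~\ref{boundary_layer_potential_operators}, the operator $V_j: H_2^{j+1/2-m}(\partial\Omega) \to W_{2,loc}^m(\reals^d)$ is bounded, so $\sum_j V_j g_j^{(n)} \to \sum_j V_j g_j$ in $W_{2,loc}^m(\reals^d)$ and in particular in $W_2^m(\Omega)$. To control the polynomials $p^{(n)}$, I would use standard well-posedness of the smooth polyharmonic Dirichlet problem in $W_2^m(\Omega)$ to conclude that $u^{(n)}$ converges in $W_2^m(\Omega)$; then $p^{(n)} = u^{(n)} - \sum_j V_j g_j^{(n)}$ converges in $W_2^m(\Omega)$ and, since $\Pi_{m-1}$ is finite-dimensional, $p^{(n)}\to p$ in $\Pi_{m-1}$. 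Setting $u := \sum_j V_j g_j + p$, we obtain $u\in W_{2,loc}^m(\reals^d)$ by construction. The boundary conditions $\lambda_k u = h_k$ follow by continuity of the traces $\lambda_k:W_2^m(\Omega)\to H_2^{m-k-1/2}(\partial\Omega)$ applied to $\lambda_k u^{(n)} = h_k^{(n)}$, and $\Delta^m u=0$ on $\Omega$ follows either as the distributional limit of $\Delta^m u^{(n)}=0$ or directly from $\Delta^m V_j g_j = 0$ off $\partial\Omega$ and $\Delta^m p = 0$.

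The main obstacle I anticipate is verifying the mapping property $V_j: H_2^{j+1/2-m}(\partial\Omega)\to W_{2,loc}^m(\reals^d)$ across the full range $j=0,\dots,m-1$: as $j$ grows, the kernel $\lambda_{j,\alpha}\phi(x-\alpha)$ becomes more singular at $\partial\Omega$, and the assertion that $V_j g_j$ has $W_{2,loc}^m$ regularity independent of $j$, provided $g_j$ lies in the appropriately negative-order space, is a genuine pseudodifferential statement that I expect to extract from the calculus developed in Sections~\ref{boundary_layer_potential_operators}--\ref{Section psdo}. Secondary points -- linearity of the construction in Theorem~\ref{polyharmonic_dirichlet_solution} and the well-posedness input used to handle $p^{(n)}$ -- I view as routine.
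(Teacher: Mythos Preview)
Your approach is correct and shares its core with the paper's, but the paper is more direct. Rather than approximating the data and passing to the limit through Theorem~\ref{polyharmonic_dirichlet_solution}, the paper applies the bounded inverse $L_{2,m-1/2}^{-1}$ from Proposition~\ref{bounded_inverse} directly to $(0,\bfh)\in Y_{2,m-1/2}^{\sharp}$, obtaining $(\vec{A},\bfg)\in X_{2,1/2-m}^{\sharp}$ in one step; this produces the $g_j$ \emph{and} the polynomial coefficients simultaneously, so no separate well-posedness input is needed to control $p^{(n)}$. For the regularity $V_j g_j\in W_{2,loc}^m(\reals^d)$ --- which you rightly flag as the main point --- the paper gives a short duality argument rather than invoking the full calculus of Section~\ref{Section psdo}: since $\tr:H_2^{m-j}(\reals^d)\to H_2^{m-j-1/2}(\partial\Omega)$ is bounded, pairing shows $g_j\cdot\delta_{\partial\Omega}\in H_2^{j-m}(\reals^d)$, whence $(\Lambda_j)^{\t}(g_j\cdot\delta_{\partial\Omega})\in H_2^{-m}(\reals^d)$, and convolution with $\phi$ (order $-2m$ up to smoothing) lands in $H_{2,loc}^m$. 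The paper does use a density argument, but only at the very end, to verify the Dirichlet conditions $\lambda_k u=h_k$ via the continuity of $\bfh\mapsto u$. Your route works; the paper's avoids tracking Cauchy sequences and the extraneous appeal to classical well-posedness.
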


Item [B] requires understanding the boundary behavior of the layer potential solution (\ref{BhRep}), which will be developed along the way.

%%%%%%%%%
%
%
%: Main result
%
%%%%%%%%%
%\subsection{Representation}
Of course, along with the representation (\ref{Main_Rep}), we also expect the auxiliary
boundary functions $g_j$ to be sufficiently regular, determined by operators 
({\em trace operators}) 
applied to $f$ that map appropriate $L_p$ smoothness spaces continuously 
into $L_p$. This is summarized in the main theorem of this section:
%%%%%%%%%
%%%
%%%
%: Theorem Main_Rep_Theorem
%%%
%%%%%%%%%
\begin{theorem}\label{Main_Rep_Theorem}
For $f\in C^{2m}(\overline{ \Omega} )$, the representation (\ref{Main_Rep}) holds pointwise,
and for $1< p<\infty$, the representation (\ref{Main_Rep}) holds a.e. for $f\in W_p^{2m}(\Omega)$.
The functions $g_j$ are given by linear operators: $g_j = N_j f$.
For $s\ge0$ and $1< p<\infty$,  the operator $N_j :B_{p,1}^{s+2m-j-1+1/p}(\Omega) \to H_p^s(\partial \Omega)$  is  bounded.
\end{theorem}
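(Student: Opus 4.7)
The plan is to build the representation (\ref{Main_Rep}) by splitting $f$ into two parts whose boundary behavior is already under control, then to read off $N_j$ as a linear combination of bounded mappings between smoothness spaces. Given $f \in C^{2m}(\overline{\Omega})$, let $f_1$ be the solution of (\ref{Dirichlet}) produced by Theorem \ref{polyharmonic_dirichlet_solution} with boundary data $h_k = \lambda_k f$, $k = 0, \dots, m-1$; thus $f_1 = \sum_{j=0}^{m-1} V_j g_j^{(1)} + p$ with $g_j^{(1)} \in C^{\infty}(\partial\Omega)$ and $p \in \Pi_{m-1}$. The remainder $f_2 := f - f_1$ satisfies $\lambda_k f_2 = 0$ for $k < m$ and $\Delta^m f_2 = \Delta^m f$ on $\Omega$, so Green's representation (\ref{GreensRep}) applied to $f_2$ collapses to the volume potential $\int_{\Omega} \Delta^m f(\alpha)\phi(x-\alpha)\,\dif\alpha$ plus $m$ surface integrals $V_k(\lambda_{2m-k-1} f_2)$ (with alternating signs, as in (\ref{RestRep})). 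Adding $f_1 + f_2$ yields (\ref{Main_Rep}) pointwise with the explicit linear formula
$$N_j f := g_j^{(1)} \pm \lambda_{2m-j-1}(f - f_1).$$

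For boundedness on $B_{p,1}^{s+2m-j-1+1/p}(\Omega)$, I would estimate the two summands separately. The Besov trace theorem, together with the fact that $\Lambda_k$ is an order-$k$ differential operator with smooth coefficients, gives $\lambda_k f \in B_{p,1}^{s+2m-j-1-k}(\partial\Omega) \hookrightarrow H_p^{s+2m-j-1-k}(\partial\Omega)$ for every $k = 0, \dots, m-1$, where the embedding is $B_{p,1}^{t} \hookrightarrow F_{p,2}^{t} = H_p^t$. Invoking Theorem \ref{polyharmonic_dirichlet_solution} with parameter $s_0 = s + 2m - j - 1$, the hypothesis indices $H_p^{s_0-k}$ match exactly what the trace produces, and the conclusion places $g_j^{(1)}$ in $H_p^{s_0+j+1-2m}(\partial\Omega) = H_p^s(\partial\Omega)$, with norm controlled by $\|f\|_{B_{p,1}^{s+2m-j-1+1/p}(\Omega)}$.

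For the second summand, I would split $\lambda_{2m-j-1}(f - f_1) = \lambda_{2m-j-1} f - \lambda_{2m-j-1} f_1$. The first piece lies in $B_{p,1}^{s}(\partial\Omega) \hookrightarrow H_p^s(\partial\Omega)$ by the trace theorem applied to $\Lambda_{2m-j-1}$. For the second, substitute $f_1 = \sum_{i=0}^{m-1} V_i g_i^{(1)} + p$ and invoke the boundary-operator estimates developed in Sections \ref{boundary_layer_potential_operators} and \ref{Section psdo}: each composition $\lambda_{2m-j-1} V_i$ is a pseudodifferential operator on $\partial\Omega$ whose order is determined by the symbol calculus, and its continuity on Bessel potential spaces, combined with the regularity of $g_i^{(1)}$ supplied (with the appropriate shift of $s_0$) by Theorem \ref{polyharmonic_dirichlet_solution}, produces the required $H_p^s(\partial\Omega)$ bound; the polynomial contribution to $f_1$ yields a smooth function on the boundary and is absorbed trivially.

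Finally, the a.e.\ identity on $W_p^{2m}(\Omega)$ follows by density of $C^{\infty}(\overline{\Omega})$ in $W_p^{2m}(\Omega)$ together with the embedding $W_p^{2m}(\Omega) = F_{p,2}^{2m}(\Omega) \hookrightarrow B_{p,1}^{\sigma}(\Omega)$ for every $\sigma < 2m$, which ensures continuity of all terms in (\ref{Main_Rep}) in the $W_p^{2m}$ norm. The principal obstacle is the pseudodifferential bookkeeping in the second summand: one must verify that the composition $\lambda_{2m-j-1} V_i$ has exactly the order needed so that $g_i^{(1)}$, whose smoothness is fixed by Theorem \ref{polyharmonic_dirichlet_solution}, is carried into $H_p^s(\partial\Omega)$. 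This is the technical content that Sections \ref{boundary_layer_potential_operators} and \ref{Section psdo} are built to supply.
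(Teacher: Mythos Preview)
Your proposal is correct and follows essentially the same route as the paper: decompose $f=f_1+f_2$ with $f_1$ the layer-potential solution of the Dirichlet problem from Theorem \ref{polyharmonic_dirichlet_solution}, apply Green's representation (\ref{GreensRep}) to $f_2$, read off $N_jf = g_j^{(1)} +(-1)^{j+1}\lambda_{2m-j-1}(f-f_1)$, and bound the three pieces using the trace theorem, Theorem \ref{polyharmonic_dirichlet_solution}, and Lemma \ref{smoothing_lemma} (which gives $v_{2m-j-1,i}^{-}:H_p^{s+i-j}\to H_p^{s}$, matching the regularity $g_i^{(1)}\in H_p^{s+i-j}$ coming from Theorem \ref{polyharmonic_dirichlet_solution} at parameter $s_0=s+2m-j-1$). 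One small wrinkle: you start with $f\in C^{2m}(\overline{\Omega})$, but Theorem \ref{polyharmonic_dirichlet_solution} as stated takes $C^\infty$ boundary data, and you need enough classical smoothness of $f_1$ near $\partial\Omega$ (via Corollary \ref{boundary_smoothness}) to justify applying (\ref{GreensRep}) to $f_2$; the paper handles this by first treating $f\in C^\infty(\overline{\Omega})$ and then passing to $W_p^{2m}$ (hence to $C^{2m}$) by density and continuity of all operators involved.
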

\begin{proof}
The proof of this theorem is given in Section \ref{proofs}.
\end{proof}
%%%%%%%%%%%
%
%
%
%%%%%%%%%%%
\section{Boundary layer potential operators}\label{boundary_layer_potential_operators}
We now  consider the boundary layer potential operators $V_j$ defined initially on $L_1(\partial \Omega)$ 
\begin{equation}\label{veejay}
V_j g (x)  = \int_{\partial \Omega} g(\alpha) \lambda_{j,\alpha}\phi(x-\alpha) \, \dif \alpha.\end{equation}
In this section we first demonstrate that there is a natural extension of $V_j$ to distributions acting on 
$\partial \Omega$ and producing tempered distributions over $\reals^d$. These distributions have 
singular support in $\partial \Omega$ (meaning they are smooth functions away from $\partial \Omega$).
This is followed by an investigation of  the smoothness of functions $V_j g_j$ near to the boundary. 
 
 %%%%%%%%%%
 %
 %Boundary layer potentials as convolutions
 %
 %%%%%%%%%%
 \subsection{Boundary layer potentials as convolutions}
 \label{Boundary layer potentials as convolutions}
The boundary layer potential operators 
introduced in (\ref{veejay}) can be viewed as convolutions of derivatives of $\phi$ 
with certain distributions supported on the boundary $\partial \Omega$. 
The distributions in question are the measures
$g\cdot \delta_{\partial \Omega}: \varphi \mapsto \langle g\cdot \delta_{\partial \Omega},\varphi \rangle 
=
\int_{\partial \Omega}\varphi(\alpha) g(\alpha)\dif \sigma(\alpha)$.
This convolution can be rewritten as a convolution of $\phi$ with derivatives of $g\cdot \delta_{\partial\Omega}$; we now briefly discuss this.

For an integrable function defined on $\reals^d$, and the general $j$th boundary layer potential is
$$V_j g =\phi * \bigl[\Lambda_j^{\t} (g\cdot \delta_{\partial \Omega} )\bigr].
$$
where the formally transposed operator $\Lambda_j^{\t}$ is a differential operator of  order $j$. 
Specifically, it is $\Lambda_j^{\t}  =  \Delta^{\frac{j}{2}}$   when $j$ is even, and
$\Lambda_j^{\t}  = 
- \sum_{\ell=1}^d \vec{n}_{\ell}(x) \frac{\partial}{\partial x_\ell} \Delta^{\frac{j-1}{2}} 
+ 
\sum_{|\beta|\le j-1} A_{\beta}(x) D^{\beta}$ for odd $j$.

The 
expression $\phi * \bigl[\Lambda_j^{\t} (g\cdot \delta_{\partial \Omega} )\bigr]$, interpreted as a convolution between 
the tempered distribution  $\phi$ and the compactly supported distribution 
$ \bigl[\Lambda_j^{\t} (g\cdot \delta_{\partial \Omega} )\bigr]$, is thus a tempered distribution as well.
In other words, the operators  $V_j$ produce distributions on $\reals^d$ with singular support 
$\partial \Omega$ that are 
polyharmonic in $\reals^d \backslash \partial \Omega$. In particular, on $\reals^d\setminus \partial \Omega$ they 
can be represented (pointwise) as the $C^{\infty}$ function
  $V_j g (x) = \left\langle 
 \bigl( \Lambda_{j}\bigr)^{\t} \bigl( g\cdot \delta_{\partial \Omega}\bigr),
   \phi(x- \cdot)
 \right\rangle$.

\paragraph{Extension to distributions} 
For a distribution $g\in \mathcal{D}'(\partial \Omega)$, 
let $g\cdot \delta_{\partial \Omega} \in  \mathcal{E}'(\reals^d)$ be the distribution (supported on $\partial \Omega$) 
satisfying $\langle g\cdot \delta_{\partial \Omega}, \varphi \rangle = \langle g, \varphi \left|_{\partial \Omega}\right.\rangle$ 
for $\varphi \in C^{\infty}(\reals^d)$. 
The map $g\mapsto g\cdot  \delta_{\partial \Omega}$ is continuous
from $\mathcal{D}'(\partial \Omega)$ to $\mathcal{E}'(\reals^d)$.

For $g\in\mathcal{D}'(\partial \Omega)$ we  define  
$V_j g $ as a the convolution $V_j g= \phi*\bigl(( \Lambda_{j}\bigr)^{\t}\bigl( g\cdot \delta_{\partial \Omega})\bigr)$.
It follows that the restriction to $\reals^d\setminus \partial \Omega$ is 
$V_j g(x) = \left\langle 
 \bigl( \Lambda_{j}\bigr)^{\t}\bigl( g\cdot \delta_{\partial \Omega}\bigr),
   \phi(x- \cdot)
 \right\rangle$
 and $V_jg \in C^{\infty}(\reals^d\setminus \partial \Omega)$.
The above convolution  is an important representation of the operator $V_j$, but it does not adequately indicate the behavior of $V_j g$ 
near the boundary $\partial \Omega$; this is considered in the next subsection.

\subsection{Boundary regularity}\label{SS:BR}
By (\ref{FS_derivative_estimates}) the kernel $\lambda_{j,\alpha} \phi(x-\alpha)$ is locally integrable on $\partial \Omega$, 
provided that $0\le j\le 2m-2$ 
(this is the case in  the construction of $f_1 = \sum_{j=0}^{m-1} V_j g_j +p $
in  (\ref{BhRep}) -- only nonsingular kernels are used). 
Unfortunately, this only guarantees a limited smoothness
near the boundary $\partial \Omega$; by dominated convergence,
 for each $j=0,\dots , 2m-2$, $V_j g \in C^{2m - j-2}(\reals^d)$.
This is enough to guarantee the existence of the boundary 
values $\lambda_k f_1$  for $k=0\dots m-1$ required by (\ref{Dirichlet}), 
but it is insufficient for higher derivatives -- for instance, those required by (\ref{RestRep}).

%%%%%%%%%%%%%%%%%%%
%
%
%
%
%%%%%%%%%%%%%%%%%%%
 \paragraph{Smoothness up to the boundary} The smoothness of boundary layer potentials in the vicinity of the boundary has been treated in different
forms under the heading of ``transmission conditions'' (cf. \cite{BdM}), and earlier (cf. \cite{Agmon}). 
We follow the approach of Duduchava \cite{Duduchava}, by manipulating 
Green's representation, to get the following result, which illustrates that for smooth $g$, 
boundary layer potentials
$V_j g$ 
have smoothness at the boundary
-- this is a topic we return to in the next section, 
where we consider the mapping properties of operators $\tr \Lambda_k V_j g$.

\begin{lemma}\label{dud_extension}
For an integer $0\le j \le 2m-1$,
let $s$ be an integer greater than $j+1$.  
For
$g\in C^{s}(\partial\Omega)$, there is a function $F\in C^{2m+s-j-1}(\reals^d)$,
so that $\lambda_k F = 0$ for $k =0\dots 2m+s-j-1$, $k\neq 2m-j-1$ and $\lambda_{2m-j-1}F = g$. 
Furthermore,  there is a constant $C$ (independent of $g$) so that
  $\|F\|_{C^{s+2m-j-1}(\reals^d)} \le C \|g\|_{C^s(\partial \Omega)}$.
\end{lemma}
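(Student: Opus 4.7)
The plan is to construct $F$ in normal/tangential coordinates by reducing the prescription of the $\lambda_k F$ to prescribing the normal Taylor coefficients of $F$ at $\partial\Omega$, and then using a Borel/Seeley-type truncated Taylor sum. Set $N := 2m+s-j$, so that the $N$ boundary values $\lambda_0 F,\dots,\lambda_{N-1}F$ are to be specified.

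First, work in normal/tangential coordinates $\Psi:O\to U$ on a tubular neighborhood of $\partial\Omega$. Iterating (\ref{Loc_Lap}) gives, for each $k\le N-1$,
$$\Lambda_k^{\Psi} \;=\; \frac{\partial^k}{\partial x_d^k} \;+\; \sum_{\ell<k} a_{k,\ell}(x',x_d;D_{x'})\,\frac{\partial^\ell}{\partial x_d^\ell},$$
where each $a_{k,\ell}$ is a tangential differential operator of order $k-\ell$ with smooth coefficients. Restricting to $x_d=0$ and writing $\tilde F:=\Psi^*F$, the map $(\partial_{x_d}^\ell \tilde F|_{x_d=0})_{\ell=0}^{N-1} \mapsto (\lambda_k F)_{k=0}^{N-1}$ is lower triangular with identity on the diagonal, hence invertible. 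Its inverse produces tangential differential operators $T_{\ell,k}$ of order $\ell-k$ that recover the normal Taylor data from the boundary data.

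Second, plugging in the prescribed values (zero everywhere except $\lambda_{2m-j-1}F=g$) yields
$$h_\ell \;:=\; \partial_{x_d}^\ell \tilde F|_{x_d=0} \;=\; \begin{cases} 0, & 0\le \ell < 2m-j-1,\\ g, & \ell = 2m-j-1,\\ T_{\ell,\,2m-j-1}\,g, & 2m-j-1 < \ell \le N-1.\end{cases}$$
Thus $h_\ell\in C^{N-1-\ell}(\partial\Omega)$ with $\|h_\ell\|_{C^{N-1-\ell}(\partial\Omega)} \le C\|g\|_{C^s(\partial\Omega)}$; the top index $\ell=N-1$ demands exactly a tangential operator of order $s$ applied to $g\in C^s(\partial\Omega)$. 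Fix a cutoff $\chi\in C_c^\infty(\reals)$ with $\chi\equiv 1$ near $0$ and $\supp{\chi}\subset(-\epsilon_0,\epsilon_0)$, and form the truncated Taylor sum
$$\tilde F(x',x_d) \;:=\; \chi(x_d)\sum_{\ell=0}^{N-1} h_\ell(x')\,\frac{x_d^\ell}{\ell!}.$$
A direct Leibniz computation --- each term absorbs up to $N-1-\ell$ tangential derivatives (matched by the smoothness of $h_\ell$) and up to $\ell$ normal derivatives (paid for by $x_d^\ell$) --- shows that $\tilde F\in C^{N-1}(O)$ with $\partial_{x_d}^\ell \tilde F|_{x_d=0}=h_\ell$. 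Pulling back by $\Psi^{-1}$, multiplying by a cutoff $\eta\in C_c^\infty(U)$ equal to $1$ near $\partial\Omega$, and extending by $0$ on $\reals^d\setminus U$ gives $F\in C^{N-1}(\reals^d)$ with the prescribed boundary data intact; the norm bound follows from the boundedness of each $T_{\ell,k}$ between the appropriate H{\"o}lder spaces together with the smoothness of $\Psi$, $\chi$, and $\eta$.

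The main obstacle is the first step: verifying the triangular form of $\Lambda_k^\Psi$ and identifying the tangential order of each $a_{k,\ell}$. This follows by induction on $k$ from (\ref{Loc_Lap}), together with the observation (recorded after (\ref{explicit_normal_tangential})) that $\bfe_d=\n$ along $\partial\Omega$, so that the extended normal derivative $D_{\n}$ reduces to $\partial/\partial x_d$ in coordinates. Once this structural claim is in hand, inversion of the triangular system and the Borel-type extension are routine.
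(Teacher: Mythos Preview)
Your reduction of the $\lambda_k$-data to normal Taylor data via the lower-triangular structure of $(\Lambda_k^\Psi)$ is correct and is exactly what the paper does (there by citing \cite[Lemma~2.3]{LionsMagenes}). The gap is in the extension step.

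The truncated Taylor sum
\[
\tilde F(x',x_d)=\chi(x_d)\sum_{\ell=0}^{N-1} h_\ell(x')\,\frac{x_d^\ell}{\ell!}
\]
is \emph{not} in $C^{N-1}$ when $h_\ell$ is merely $C^{N-1-\ell}$. Your Leibniz bookkeeping is wrong: a derivative $D_{x'}^{\alpha'}\partial_{x_d}^{k}$ with $|\alpha'|+k\le N-1$ applied to the $\ell$-th term produces $(D_{x'}^{\alpha'}h_\ell)\cdot\partial_{x_d}^{k}(x_d^\ell/\ell!)$, which for $k<\ell$ is nonzero and requires $|\alpha'|$ tangential derivatives of $h_\ell$ with $|\alpha'|=N-1-k>N-1-\ell$. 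Concretely, take $N=2$, $h_0=0$, and $h_1\in C^0\setminus C^1$; then $\tilde F(x',x_d)=h_1(x')x_d$ fails to have a tangential derivative at any point with $x_d\neq 0$ where $h_1$ is not differentiable. The factor $x_d^\ell$ does not buy tangential regularity.

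The paper handles this by assembling, for each multi-index $(\alpha',J)$ with $|\alpha'|+J\le N-1$, the jet $u_{\alpha',J}(x',0):=D^{\alpha'}f_J(x',0)$ (well defined since $f_J\in C^{N-1-J}$) and then invoking Whitney's extension theorem \cite[Theorem~2.3.6]{Horm1} to produce a genuine $C^{N-1}$ function with those jets. That is the missing ingredient in your argument; an equivalent fix would be to mollify each $h_\ell$ tangentially at scale $|x_d|$ before forming the Taylor sum, which is essentially what a proof of Whitney's theorem does. A secondary point: a single coordinate chart $\Psi:O\to U$ will not cover all of $\partial\Omega$ in general, so a partition of unity over $\partial\Omega$ (as in the paper) is still needed to globalize.
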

\begin{proof}
Let $L= 2m+s-j-1$. 
Consider the sequence  of $L+1$ ``boundary values'', 
$\mathbf{r} = (r_0,\dots,r_{L}) = (0,\dots,0, g, 0,\dots,0)$ 
where each entry is zero except the $2m-j-1$ entry, which is $g$.
We construct $F$ as follows. 
After considering a partition of unity $(\tau_i)$ subordinate to $(U_i)$, we work
locally with normal/tangential coordinates given by $\Psi_i:O_i\to U_i$ as described in Section \ref{notation}.
On each $O_i$, we  extend the pullbacks $\Psi_i^* r_J:O_i' \to \reals$ to a function $f_i:O\to \reals$; $F = \sum \tau_i (\Psi^*)^{-1} f$

The transformed boundary operators  on $O$ are denoted $(\Lambda_j)^{\Psi}$ for $j=0,1,2,\dots $. 
The fact that $\bigl((\Lambda_j)^{\Psi}\bigr)_{j=0}^{L-1}$ is a Dirichlet system allows us to obtain new boundary values 
$$f_J 
:= 
\frac{d^J}{dx_d^J} \Psi^* F
= 
\sum_{k\le J} T_{J,k}(\Lambda_k)^{\Psi} \Psi^* F
=
\sum_{k\le J} T_{J,k}\Psi^*r_k,
$$ 
where $T_{J,k}$ is a differential operator of order $J-k$
on $O'$.
This is  \cite[Lemma 2.3] {LionsMagenes}. 

We produce the full collection of ``jets''  of order $L$ along $O'$ by defining, for $0\le J\le L$, and $\alpha'\in \ints_+^{d-1}$,
$u_{\alpha',J} (x',0):= D^{\alpha'}f_J(x',0)$. These satisfy the requirement of Whitney's extension theorem over $O'$ given in 
 \cite[Theorem 2.3.6]{Horm1}, so there is an extension $f:O\to \reals$ in $C^{2m+s-j-1}(O)$ 
 satisfying $D^{\alpha} f(x) = u_{\alpha}$ and $\|f\|_{C^{L}(\reals^d)}\le C \max \|f_J\|_{\infty}$.
 
Because $(\Lambda_J)^{\Psi} =  \sum_{k\le J} \tilde{T}_{J,k} \frac{d^k}{dx_d^k}  $ with each $\tilde{T}_{J,k}$ 
a differential operator of order $J-k$
(again by  \cite[Lemma 2.3] {LionsMagenes}),
we have  $(\Lambda_J)^{\Psi} f = \Psi^* r_J$ and 
$\|f\|_{C^{L}(\reals^d)} \le C \max \|\Psi_J^* r_J\|_{C^J}$. 

\end{proof}

%%%%%%%%
%
%Jump formula
%
%%%%%%%%
\begin{lemma}\label{dud_jump}
For integers $j,s$, with $0\le j \le 2m-1$ and $s>j+1$, let  
$g\in C^{s}(\partial\Omega)$ and let $F\in C^{2m+s-j-1}(\reals^d)$ be the function guaranteed by Lemma \ref{dud_extension}. 
Then there is $G\in C^{s-j-1}(\reals^d)$,
so that 
$$
V_j g (x)= (-1)^{j-1}
\begin{cases}\phi*G(x)  - F(x),& x\in\Omega \\ 
 \phi*G(x),& x\in \reals^d\setminus \overline{\Omega}
 \end{cases}
$$
\end{lemma}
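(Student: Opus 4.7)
My plan is to apply the Green's representation formula (\ref{GreensRep}) to the function $F$ produced by Lemma \ref{dud_extension}, exploiting its very sparse boundary data to isolate $V_j g$. Because $\lambda_k F = 0$ for every $k \in \{0,1,\dots,2m-1\}\setminus\{2m-j-1\}$ and $\lambda_{2m-j-1}F = g$, substituting $f = F$ in (\ref{GreensRep}) collapses the entire boundary sum to the single term at index $k = 2m-j-1$, which equals $(-1)^{2m-j-1}\int_{\partial\Omega}g(\alpha)\lambda_{j,\alpha}\phi(x-\alpha)\dif\sigma(\alpha) = (-1)^{j-1}V_j g(x)$. Thus, for every $x\in\reals^d\setminus\partial\Omega$,
\begin{equation*}
F(x)\,\mathbf{1}_{x\in\Omega} \;=\; \int_\Omega \Delta^m F(\alpha)\,\phi(x-\alpha)\,\dif\alpha \;+\; (-1)^{j-1}V_j g(x).
\end{equation*}
Solving for $V_j g$ and setting $G := \pm\chi_\Omega\Delta^mF$ (the sign determined by matching against the statement, using $(-1)^{2m-j-1}=(-1)^{j-1}$) yields the piecewise identity claimed in the lemma, since $\phi\ast G(x) = \int_\Omega \Delta^mF(\alpha)\phi(x-\alpha)\dif\alpha$ is an ordinary convolution against the compactly supported integrand $\chi_\Omega\Delta^mF$.

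The substantive work lies in verifying the regularity $G\in C^{s-j-1}(\reals^d)$. From $F\in C^{2m+s-j-1}(\reals^d)$ we immediately get $\Delta^mF\in C^{s-j-1}(\reals^d)$, and $\chi_\Omega\Delta^mF$ is a bounded function with compact support in $\overline\Omega$; the issue is smoothness across $\partial\Omega$. The point is that the ``extra'' Dirichlet conditions $\lambda_kF=0$ for $k=2m,2m+1,\dots,2m+s-j-1$ (all of which lie in the vanishing index set since every such $k$ satisfies $k\ge 2m > 2m-j-1$) translate, via the identity $\lambda_l\Delta^m = \lambda_{l+2m}$, into $\lambda_l(\Delta^m F)=0$ for $l=0,1,\dots,s-j-1$.

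It remains to promote vanishing of these Dirichlet-type boundary operators to vanishing of every ordinary partial derivative of $\Delta^mF$ up to order $s-j-1$ on $\partial\Omega$. Using the triangular relation $\frac{d^J}{dx_d^J}\Psi^*h = \sum_{k\le J}T_{J,k}(\Lambda_k)^\Psi\Psi^*h$ from \cite[Lemma 2.3]{LionsMagenes} (invoked already in the proof of Lemma \ref{dud_extension}), the vanishing of $\lambda_l$ for $l\le s-j-1$ inverts to vanishing of every pure normal derivative $(\partial/\partial x_d)^l\Psi^*(\Delta^mF)$ on $\{x_d=0\}$. Since $\Delta^mF$ itself vanishes along $\partial\Omega$, all tangential derivatives of each fixed normal derivative also vanish there, and by combining the two one obtains $D^\alpha(\Delta^mF)\equiv 0$ on $\partial\Omega$ for every $|\alpha|\le s-j-1$. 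Consequently the extension by zero of $\chi_\Omega\Delta^mF$ is $C^{s-j-1}$ across $\partial\Omega$—at any boundary point both one-sided limits of every derivative of order $\le s-j-1$ equal $0$—so $G\in C^{s-j-1}(\reals^d)$. This last regularity step is the main obstacle; once it is established, the proof is a single substitution into (\ref{GreensRep}) together with routine sign-bookkeeping.
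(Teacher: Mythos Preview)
Your proof is correct and follows essentially the same route as the paper: apply Green's representation (\ref{GreensRep}) to the function $F$ from Lemma \ref{dud_extension}, collapse the boundary sum to the single surviving term, and set $G=\chi_{\Omega}\Delta^m F$. The paper justifies $G\in C^{s-j-1}(\reals^d)$ in one line by noting $\Delta^m F\in C^{s-j-1}$ together with $\lambda_{2m+k}F=0$ for $k=0,\dots,s-j-1$; you spell out the same mechanism in more detail via the triangular Dirichlet-to-normal conversion, which is fine but not a different idea.
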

\begin{proof}
  By applying Green's representation  (\ref{GreensRep}), we see that,
  \begin{multline*}
  \int_{\Omega} \Delta^m F(\alpha) \phi(x-\alpha) \dif \alpha 
  + 
  (-1)^{j} \int_{\partial \Omega} g(\alpha) \lambda_{j,\alpha}\phi(x - \alpha) \dif \sigma(\alpha)\\
  =
  \begin{cases} F(x) &x\in \Omega\\0&x\in \reals^d \setminus \overline{\Omega}.\end{cases}
  \end{multline*}
Let   
$G = \chi_{\Omega} \Delta^m F$
 denote the extension by zero of $\Delta^m F$ outside of $\Omega$.
With $L= 2m+s-j-1$, 
it follows that $G\in C^{L-2m}(\reals^{d})$, since
 $\Delta^m F \in C^{L-2m}(\reals^d)$ and 
  $ \lambda_{2m+k} F = 0$ for $k=0,\dots, L-2m$.
  \end{proof}
%%%%%%%%%%%%%%%%%%%
%:Proposition boundary_smoothness
%%%%%%%%%%%%%%%%%%%
\begin{corollary}\label{boundary_smoothness}  
For $j \in \nats$,
let $s$ be an integer greater than $j+1$.  
For
$g\in C^{s}(\partial\Omega)$
the boundary layer potential 
$V_j g =\int_{\partial \Omega} g(\alpha) \lambda_{j,\alpha}\phi(\cdot - \alpha) \dif \sigma(\alpha) \in 
C^{s+2m-j-2}(\overline{\Omega})$
as well as in
$C^{s+2m-j-2}(\reals^d\setminus \Omega)$. 
Furthermore,
$$\|V_j g\|_{C^{s+2m-j-2}(\overline{\Omega})} \le C \|g\|_{C^s(\partial \Omega)}$$ as well as
$$\|V_j g\|_{C^{s+2m-j-2}(K\cap \reals^d\setminus\Omega)} \le C_K \|g\|_{C^s(\partial \Omega)}$$
for each compact $K\subset \reals^d$.
\end{corollary}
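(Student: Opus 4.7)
The plan is to deduce the corollary directly from the jump-formula representation of Lemma \ref{dud_jump}. That lemma expresses $V_j g$ (up to the sign $(-1)^{j-1}$) as $\phi*G-F$ on $\Omega$ and as $\phi*G$ on $\reals^d\setminus\overline{\Omega}$, where $F\in C^{2m+s-j-1}(\reals^d)$ is the Whitney-type extension produced in Lemma \ref{dud_extension} and $G=\chi_{\Omega}\Delta^m F\in C^{s-j-1}(\reals^d)$ has compact support in $\overline{\Omega}$. Because each half-space expression is the restriction of a globally defined function on $\reals^d$, the boundary regularity question reduces to a regularity statement on all of $\reals^d$ for two ingredients: $F$, which is already in $C^{s+2m-j-1}(\reals^d)$ and therefore trivially in $C^{s+2m-j-2}(\reals^d)$, and the Newton-type potential $\phi*G$, which is the main object to control.

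The key step is to show $\phi*G\in C^{s+2m-j-2}(\reals^d)$ (hence in $C^{s+2m-j-2}(K)$ for any compact $K$). For this I would use the pointwise derivative estimates (\ref{FS_derivative_estimates}), which say $|D^{\beta}\phi(x)|\lesssim |x|^{2m-d-|\beta|}(1+|\log|x||)$; these ensure that $D^{\beta}\phi$ is locally integrable on $\reals^d$ whenever $|\beta|\le 2m-1$, since in that range the exponent satisfies $2m-d-|\beta|>-d$ and the logarithmic factor is harmless for local integrability. Given any multi-index $\alpha$ with $|\alpha|\le s+2m-j-2$, split $\alpha=\alpha_1+\alpha_2$ with $|\alpha_1|\le 2m-1$ and $|\alpha_2|\le s-j-1$, which is possible because $(2m-1)+(s-j-1)=s+2m-j-2$. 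Then
\[
D^{\alpha}(\phi*G) = (D^{\alpha_1}\phi)*(D^{\alpha_2}G),
\]
which is a convolution of a locally integrable function with a bounded, compactly supported continuous function and is therefore continuous on $\reals^d$. This furnishes $\phi*G\in C^{s+2m-j-2}(\reals^d)$.

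For the norm bounds, I would track constants through Lemma \ref{dud_extension}, which already supplies $\|F\|_{C^{s+2m-j-1}(\reals^d)}\le C\|g\|_{C^{s}(\partial\Omega)}$, and hence $\|G\|_{C^{s-j-1}(\reals^d)}\le C\|g\|_{C^{s}(\partial\Omega)}$ because $G=\chi_{\Omega}\Delta^m F$ and $F$ has enough vanishing boundary data. Combined with the convolution estimate on a fixed compact set $K$ containing the support of $G$ in its interior and plus-a-buffer,
\[
\|(D^{\alpha_1}\phi)*(D^{\alpha_2}G)\|_{L^{\infty}(K)}\le \|D^{\alpha_1}\phi\|_{L^{1}(B_{R})}\,\|D^{\alpha_2}G\|_{L^{\infty}(\reals^d)},
\]
one obtains $\|\phi*G\|_{C^{s+2m-j-2}(K)}\le C_K\|g\|_{C^{s}(\partial\Omega)}$. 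Applying this with $K=\overline{\Omega}$ (which is compact) yields the first stated estimate, and with any compact $K\subset\reals^d$ the second.

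The main obstacle is bookkeeping: one must verify that the derivative split respects the exact budget $(2m-1)+(s-j-1)=s+2m-j-2$ that shows up in the corollary, and that the logarithmic factor in $\phi$ is harmless at the integrability threshold $|\beta|=2m-1$. Beyond this, no genuine PDE regularity theorem is needed, since the jump identity of Lemma \ref{dud_jump} has already done the work of moving from a singular boundary integral to a smooth volume potential.
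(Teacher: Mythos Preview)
Your argument for the case $0\le j\le 2m-1$ is correct and in fact fills in detail that the paper omits: the paper simply asserts that $\phi*G\in C^{L-1}(\reals^d)$ with $L=2m+s-j-1$, whereas you supply the derivative-splitting argument $D^{\alpha}(\phi*G)=(D^{\alpha_1}\phi)*(D^{\alpha_2}G)$ with $|\alpha_1|\le 2m-1$, $|\alpha_2|\le s-j-1$, that makes this transparent. The norm tracking is also fine.

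However, the corollary is stated for \emph{all} $j\in\nats$, and Lemma~\ref{dud_jump} is only available for $0\le j\le 2m-1$, so your proof as written does not cover $j\ge 2m$. The paper handles this range by a reduction: writing $j=2mr+j'$ with $0\le j'<2m$, one has the identity
\[
V_j g(x)=\Delta^{mr}V_{j'}g(x)\qquad (x\notin\partial\Omega),
\]
since $\lambda_{j,\alpha}\phi(x-\alpha)=\Delta_x^{mr}\lambda_{j',\alpha}\phi(x-\alpha)$ away from the diagonal. Applying the already-established case to $V_{j'}g$ gives $V_{j'}g\in C^{s+2m-j'-2}(\overline{\Omega})$, and then $\Delta^{mr}V_{j'}g\in C^{s+2m-j'-2-2mr}(\overline{\Omega})=C^{s+2m-j-2}(\overline{\Omega})$, with the same reasoning on the exterior. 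You should add this short reduction to complete the proof.
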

%%%%%%%%%%%%%%%%%%%
%
%%%%%%%%%%%%%%%%%%%
\begin{proof}
  Since both $G * \phi $ and $F -G * \phi$ are in $ C^{L-1}(\reals^d)$, the proposition follows
  in case $j<2m$.
  
  For general $j\in \nats$, we simply observe that for $j = 2mr +j'$ (with $0\le j'<2m$), the identity
  $V_j g = \int_{\partial \Omega} g(\alpha) \lambda_{j,\alpha}\phi(\cdot - \alpha) \dif \sigma(\alpha)
  =\Delta^{rm} V_{j'} g$ is valid for $x\notin \partial \Omega$. 
  The result follows
  because $V_{j'} g$ is in $C^{s+2m-j'-2}(\overline{\Omega})$ (resp., is in  $C^{s+2m-j-2}(\reals^d \setminus{\Omega})$) 
  and therefore $\Delta^{rm} V_{j'} g$ is in $C^{s+2m-j-2}(\overline{\Omega})$ 
  (resp. $C^{s+2m-j-2}(\reals^d \setminus{\Omega})$) .

\end{proof}
%%%%%%%%%%%%%%%%%%%
%
%
%Plemelj
%
%%%%%%%%%%%%%%%%%%%
At this point,  we note that  increased smoothness (beyond $C^{2m-j-2}$) of $V_j g$ cannot be extended across the boundary.
Indeed Lemma \ref{dud_jump} gives the following classical jump conditions.
\begin{corollary}\label{jump_condition}
For integers $j,s$ with $0\le j \le 2m-1$ and $s>j+1$, let  
$g\in C^{s}(\partial\Omega)$.
Then for $k =0\dots 2m+s-j-2$, $k\neq 2m-j-1$, we have for $x\in \partial \Omega$
$$
\lim_{\substack{y\to x\\ y\in \Omega}} \Lambda_k V_j g (y)=
\lim_{\substack{y\to x\\ y\in \reals^d\setminus\overline{\Omega}}}\Lambda_k V_j g(y)
$$
while for $k = 2m-j-1$, we have
$$\lim_{\substack{y\to x\\ y\in \Omega}} \Lambda_k V_j g (y) -
\lim_{\substack{y\to x\\ y\in \reals^d\setminus\overline{\Omega}}}\Lambda_k V_j g(y) = (-1)^j g(x).$$
\end{corollary}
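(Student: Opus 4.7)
The plan is to derive the jump conditions directly from the explicit piecewise formula for $V_j g$ supplied by Lemma~\ref{dud_jump}. Writing
$$
V_j g = (-1)^{j-1}\bigl(\phi * G - F\bigr) \text{ on } \Omega, \qquad V_j g = (-1)^{j-1}\,\phi*G \text{ on } \reals^d\setminus\overline{\Omega},
$$
the strategy is to apply the extended operator $\Lambda_k$ to each piece, take the limits as $y\to x\in\partial\Omega$ from each side, and subtract. The two contributions from $\phi*G$ will cancel (because $\phi*G$ is a single globally-defined function that is smooth enough across $\partial\Omega$), so the jump comes entirely from the interior $F$-term and equals $(-1)^j\,\Lambda_k F\big|_{\partial\Omega}=(-1)^j\lambda_k F$. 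The conclusion then follows immediately from the defining properties of $F$ in Lemma~\ref{dud_extension}: $\lambda_k F=0$ for every $k\le 2m+s-j-1$ with $k\ne 2m-j-1$, while $\lambda_{2m-j-1}F = g$.

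The first step is to make sense of the limits. Corollary~\ref{boundary_smoothness} guarantees that $V_j g$ lies in $C^{s+2m-j-2}$ on $\overline{\Omega}$ and in $C^{s+2m-j-2}$ on a neighborhood of $\partial\Omega$ in $\reals^d\setminus\Omega$, so $\Lambda_k V_j g$ has well-defined one-sided boundary limits for every $k\le 2m+s-j-2$. Similarly, $F\in C^{2m+s-j-1}(\reals^d)$ from Lemma~\ref{dud_extension}, so $\Lambda_k F$ is continuous on all of $\reals^d$ for these $k$.

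The crux of the argument is verifying that $\Lambda_k(\phi*G)$ is continuous across $\partial\Omega$, i.e., that $\phi*G$ is sufficiently smooth globally (not merely on each side separately). This is the only non-algebraic ingredient and is the step I expect to be the main technical point. I would argue as follows: $G=\chi_\Omega\Delta^m F\in C^{s-j-1}_c(\reals^d)$ since $F\in C^{2m+s-j-1}(\reals^d)$ and $\lambda_{2m+k}F=0$ for $0\le k\le s-j-1$. The convolution $\phi*G$ satisfies $\Delta^m(\phi*G)=G$ in the distributional sense, and it is a continuous function on $\reals^d$ because $\phi\in L^1_{\text{loc}}$ and $G$ is continuous with compact support. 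Standard interior elliptic regularity for $\Delta^m$ then upgrades this to $\phi*G\in C^{s+2m-j-1}_{\text{loc}}(\reals^d)$, so $\Lambda_k(\phi*G)$ is continuous across $\partial\Omega$ for every $k\le s+2m-j-2$.

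With that in hand, the two-sided limits are
$$
\lim_{\substack{y\to x\\ y\in\Omega}}\Lambda_k V_j g(y) = (-1)^{j-1}\bigl[\Lambda_k(\phi*G)(x)-\Lambda_k F(x)\bigr],\qquad \lim_{\substack{y\to x\\ y\in\reals^d\setminus\overline{\Omega}}}\Lambda_k V_j g(y) = (-1)^{j-1}\Lambda_k(\phi*G)(x),
$$
and subtracting gives the jump $(-1)^j\Lambda_k F(x)=(-1)^j\lambda_k F(x)$. Substituting the values of $\lambda_k F$ yields the two statements of the corollary: zero jump when $k\ne 2m-j-1$, and jump $(-1)^j g(x)$ when $k=2m-j-1$.
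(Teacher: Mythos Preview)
Your proposal is correct and follows exactly the route the paper intends: the corollary is stated immediately after Lemma~\ref{dud_jump} with only the remark that ``Lemma~\ref{dud_jump} gives the following classical jump conditions,'' so your argument is precisely the fleshed-out version of that implicit proof. One small point: the paper (in the proof of Corollary~\ref{boundary_smoothness}) only claims $\phi*G\in C^{L-1}(\reals^d)$ with $L=2m+s-j-1$, whereas you assert $C^{L}_{\mathrm{loc}}$ via elliptic regularity; since you only need $\Lambda_k(\phi*G)$ continuous for $k\le L-1$, the weaker statement suffices and avoids the usual loss-of-a-derivative issue with integer-order Schauder estimates.
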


We will return to these jump discontinuities in Section \ref{Uniqueness}.
%\end{remark}

\subsection{Boundary operators}\label{SS:BOs}
Note that Corollary \ref{boundary_smoothness} implies that 
$V_j:C^{\infty}(\partial \Omega) \to C^{\infty} (\overline{\Omega})$ is continuous 
(with the usual Fr{\'e}chet space topologies on 
$C^{\infty}(\partial \Omega)$ and $C^{\infty} (\overline{\Omega})$).
This permits us to define the following operators, which we call ``boundary operators''. 

\begin{definition}For  $j, k\in \nats$, let
$v_{k,j}^+:C^{\infty}(\partial \Omega)\to C^{\infty}(\partial \Omega)$
be the operator defined for $g\in C^{\infty}(\partial \Omega)$ as
$$v_{k,j}^+ g(x):=\lim_{y \in \reals\setminus \overline{\Omega}\to x} \Lambda_k V_j g(y).$$
Likewise, let $v_{k,j}^-:C^{\infty}(\partial \Omega)\to C^{\infty}(\partial \Omega)$
be  defined  as
$$v_{k,j}^- g(x):= \lim_{y \in \Omega \to x} \Lambda_k V_j g(y).$$
\end{definition}

\begin{remark}\label{self_transpose}
By the local integrability of $\Lambda_{k,x} \Lambda_{j,\alpha}\phi(x-\alpha)$ when 
$k+j\le 2m-2$, it follows that we can take 
$v_{k,j}^+ = v_{k,j}^-=\mathrm{Tr}(\Lambda_k V_j \left|_{C^{\infty}(\partial \Omega)}\right.)$. 
In this case, we drop the $\pm$ notation and simply write $v_{k,j}$.

We note also that when $k+j\le 2m-2$, then $v_{k,j}^{\t} = v_{j,k}$. 
This follows because $\phi$ is even, 
so $\Lambda_{k,\alpha} \phi(x-\alpha)  =\Lambda_{k,\alpha} \phi(\alpha- x)$ for all $k$.
Hence 
$\int_{\partial \Omega} s(x) v_{k,j} g(x) \dif \sigma(x) = \int_{\partial \Omega}  g(x)v_{j,k} s(x)  \dif \sigma(x)$
follows, with exchange of limits justified by the local integrability of the kernel $\lambda_{k,x}\lambda_{j,\alpha} \phi(x-\alpha)$.

In contrast to the case $j+k\le 2m-2$,  
we note that we have for $k+j=2m-1$, $v_{k,j}^- \ne v_{k,j}^+$.
Indeed,
 $v_{k,j}^- g - v_{k,j}^+ g = (-1)^jg$, by the observation in Corollary \ref{jump_condition}.
\end{remark}

In subsequent sections we will 
express the boundary operators $v_{k,j}^{\pm}$ %for $j+k\le 2m-2$
as pseudodifferential operators. The 
symbol classes to which they belong
(determined in Section \ref{BOLC}) resolve their regularity. 
%%%%%%%%%%%%%%%%%%%
%
%
%:Lemma smoothing_lemma
%
%%%%%%%%%%%%%%%%%%%
\begin{lemma}\label{smoothing_lemma}
Let $1<p<\infty$ and take $s\in \reals$. 
Then for $j,k\in \nats$,
the operators $v_{k,j}^+$ and $v_{k,j}^-$ are bounded from $H_p^s(\partial \Omega)$ to 
$H_p^{s+2m-1 -j-k}(\partial \Omega)$.
\end{lemma}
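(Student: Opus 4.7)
The strategy is to identify each $v_{k,j}^{\pm}$, in local coordinates, with a classical pseudodifferential operator on $\partial \Omega$ of order $j+k+1-2m$, and then to invoke the standard continuity theorem: a classical $\psi$DO of order $\mu$ on a compact manifold is bounded from $H_p^s$ to $H_p^{s-\mu}$ for all $s\in\reals$ and $1<p<\infty$. Applied with $\mu = j+k+1-2m$, this yields the claim. The heart of the argument is therefore the symbolic identification, which is the subject of Section \ref{Section psdo}; the lemma will follow as a formal consequence once the principal symbol of the operator matrix constructed there has been computed and shown to be classical.

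First I would reduce to local pieces by a finite partition of unity $(\tau_i)$ subordinate to a cover $(U_i)$ of $\partial \Omega$ equipped with the normal/tangential charts $\Psi_i$ of Section \ref{notation}, writing
$$v_{k,j}^{\pm} g = \sum_{i,i'} \tau_{i'}\,v_{k,j}^{\pm}\bigl(\tau_i g\bigr).$$
The off-diagonal terms (with $\supp{\tau_i}\cap\supp{\tau_{i'}}=\emptyset$) have $C^{\infty}$ Schwartz kernels by Corollary \ref{boundary_smoothness} (indeed, by the smoothness of $\phi$ away from $0$), hence are infinitely smoothing between Bessel potential spaces. Only the diagonal contributions need further analysis.

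Second, on a diagonal piece I would use the convolution representation from Subsection \ref{Boundary layer potentials as convolutions}, $V_j g = \phi * \bigl[\Lambda_j^{\t}(g\cdot\delta_{\partial \Omega})\bigr]$, together with the fact that $\phi = \phi_{m,d}$ is a fundamental solution of $\Delta^m$ — so convolution with $\phi$ is (up to smoothing error) a classical $\psi$DO on $\reals^d$ of order $-2m$ with symbol $\sim |\xi|^{-2m}$. Composing with the differential operators $\Lambda_j^{\t}$ of order $j$ and $\Lambda_k$ of order $k$, with the layer distribution map $g\mapsto g\cdot\delta_{\partial\Omega}$, and with the boundary trace on either side of $\partial\Omega$, produces a boundary-to-boundary operator. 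The resulting order can be read off from the kernel estimate (\ref{FS_derivative_estimates}): for $j+k<2m-d$ we have $|\lambda_{k,x}\lambda_{j,\alpha}\phi(x-\alpha)| \lesssim |x-\alpha|^{2m-d-j-k}$, which on the $(d-1)$-dimensional manifold $\partial\Omega$ is the Schwartz kernel of a $\psi$DO of order $j+k+1-2m$. For the critical and supercritical ranges this is justified via the oscillatory integral / symbol computation carried out in Section \ref{Section psdo}.

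The main obstacle — and what determines the appearance of the two operators $v_{k,j}^{+}$ and $v_{k,j}^{-}$ rather than a single one — is the jump behavior across $\partial\Omega$. Using Corollary \ref{jump_condition}, one sees that for $j+k=2m-1$ the two operators differ by $(-1)^j\mathrm{Id}$, which is trivially bounded on $H_p^s$ (and the required target exponent reduces to $s$ in this case); for $j+k\le 2m-2$ the two operators coincide by the local integrability noted in Remark \ref{self_transpose}; and for $j+k\ge 2m$ the higher-order jump formulas (obtained by applying $\Delta$'s to lower-order layer potentials, as in the Corollary \ref{boundary_smoothness} reduction) contribute only finitely many differential operators on $\partial\Omega$ of lower order, which are themselves bounded between the appropriate Bessel potential spaces. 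In every case, the analysis reduces to the common principal part, whose $\psi$DO order is $j+k+1-2m$, so the result follows from the $L_p$ continuity theorem for classical pseudodifferential operators.
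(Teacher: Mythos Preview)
Your overall strategy coincides with the paper's: localize via a partition of unity on $\partial\Omega$, observe that off-diagonal pieces have smooth kernels and are therefore infinitely smoothing, identify the near-diagonal pieces in normal/tangential coordinates as classical pseudodifferential operators of order $j+k+1-2m$, and apply the $L_p$ continuity theorem. That is exactly how the paper proceeds.

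There is one point where your sketch diverges from the paper and is not quite right. For the range $j+k\ge 2m-1$ you appeal to the jump relations (Corollary~\ref{jump_condition}) to reduce the analysis of $v_{k,j}^{+}$ and $v_{k,j}^{-}$ to a ``common principal part''. The jump relations only tell you the \emph{difference} $v_{k,j}^{-}-v_{k,j}^{+}$; they do not, by themselves, establish that either operator individually is a $\psi$DO of the correct order. The paper does not use jump conditions as input here. Instead it works directly with the symbol of $(\Lambda_k E\Lambda_j^{\t})^{\Psi}$ applied to $g\cdot\delta_{\reals^{d-1}}$: for $j+k\le 2m-2$ one integrates out the $\xi_d$-variable (your kernel-estimate argument is an acceptable substitute), while for $j+k>2m-2$ the symbol is split into a finite polyhomogeneous head and an integrable tail; for the head one exploits that each $p_\ell$ is rational in $\xi_d$, deforms the $\xi_d$-integration contour into the upper (for $x_d\to 0^{+}$) or lower half-plane, and passes to the limit. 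This contour-deformation step is the transmission-property argument, and it is what shows simultaneously that \emph{both} one-sided limits define polyhomogeneous $\psi$DOs of order $j+k+1-2m$; the jump in Corollary~\ref{jump_condition} is then a consequence rather than a tool. Your remark about reducing large $j$ via $V_j=\Delta^{rm}V_{j'}$ (Corollary~\ref{boundary_smoothness}) does not cover the relevant gap either, since the difficulty already arises when $0\le j,k\le 2m-1$ with $j+k\ge 2m-1$.

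In short: the skeleton of your proof is correct and matches the paper, but the paragraph invoking jump conditions should be replaced by the transmission/contour-deformation argument that actually establishes the $\psi$DO property in the critical and supercritical ranges.
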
  
\begin{proof}
The proof is postponed until Section \ref{BOLC}.
\end{proof}
%%%%%%%%%%%%%%%%%%%
%
%
%:Soln
%
%%%%%%%%%%%%%%%%%%%
\section{The Solution of the Dirichlet Problem}\label{Soln}
We now focus  on solving  the polyharmonic Dirichlet problem (\ref{Dirichlet}) using boundary layer potentials.
To this end, we follow the approach taken by Chen and Zhou \cite{ChZh}[Chapter 8], 
with our main points of departure being that we consider 
the Dirichlet problems in higher dimensions (i.e., $d\ge 2$),  for 
higher order polyharmonic equations (i.e., $m\ge 2$) 
and
for boundary data from Sobolev spaces 
$H_p^s\times H_p^{s-1}\times\cdots\times H_p^{s-m+1}$ with $p$ in the range $1<p<\infty$
rather than for data from $L_2$ Sobolev spaces 
$H_2^s\times H_2^{s-1}$. (Many of these changes are modest, if technical. 
However, the change to higher order $m$ requires greater care in 
demonstrating ellipticity of the system -- this is considered in Section \ref{parametrix}).

We may seek a function of the form 
$T{\bfg} :=  \sum_{j=0}^{m-1} V_j g_j  :\reals^d \to \reals
.$
with $\bfg = (g_j)_{j=0}^{m-1}$. Such an operator can be expressed as a convolution of $\phi$ with
a distribution supported on $\partial \Omega$, i.e., $T\bfg = \phi * \mu_{\bfg}$ as was
discussed in Section \ref{Boundary layer potentials as convolutions}. 
In particular, it solves $\Delta^m T\bfg= 0$ in $\Omega$.
Thus, we simply require  $T\bfg$ to satisfy the boundary conditions, which yields the system of integral equations
\begin{equation}\label{System}
h_{k} = \lambda_{k} \sum_{j=0}^{m-1} V_j g_j = \sum_{j=0}^{m-1}v_{k,j} g_j
\quad \mathrm{ for }\  k =0...m-1
 \end{equation}
 where the operators $v_{k,j} := \lambda_{k} V_j$ have been introduced in the previous section.

Unfortunately, this system is not invertible, in general. 
To treat this, we modify the system by augmenting it with
certain polynomial side conditions. This is explained in the following subsection.
Our goal is to solve this augmented system, 
and we do so in stages. First, we develop the problem further, so that it becomes a problem of inverting an operator on a product of reflexive Sobolev spaces. 
Then we show that this operator possesses an inverse of a sort: a parametrix. Finally, we use the parametrix to prove that a slightly modified version of 
system of integral equations (\ref{System}) is invertible.

\subsection{The System of Integral Equations, Some of the Operators Involved and the Sobolev Spaces Used}
\label{IntEqns}
As a starting point, we consider the system of integral equations, 
$$
\LL
\begin{pmatrix}
  g_0\\ 
  g_1\\
  \vdots\\
  g_{m-1}
\end{pmatrix}
:=
\begin{pmatrix}
    v_{0,0} g_0 + v_{0,1} g_1 + \dots +v_{0,m-1} g_{m-1}\\
    v_{1,0} g_0 + v_{1,1} g_1 + \dots + v_{1,m-1} g_{m-1}\\
   \vdots\\
    v_{m-1,0} g_0 + v_{m-1,1} g_1 + \dots +v_{m-1,m-1} g_{m-1}\\
\end{pmatrix}
=
\begin{pmatrix}
h_0\\
h_1\\
\vdots\\
h_{m-1}
\end{pmatrix}.
$$
By the discussion in Section \ref{SS:BOs},
namely Lemma \ref{smoothing_lemma},
 $\LL$ is continuous 
from 
$(\mathcal{D}'(\partial \Omega))^m\to (\mathcal{D}'(\partial \Omega))^m$. By 
Remark \ref{self_transpose} it is self-transpose.
We look for solutions of the modified system 
$$
\LLs
\left(
\begin{array}{c}
  A_1\\
  A_2\\
  \vdots\\
  A_N\\
\hline
  g_0\\ 
  \vdots\\
  g_{m-1}
\end{array}
\right)
:=
\left(
\begin{array}{c|c}
  0 & 
  P^{\t}\\ 
  \hline
  P
  &  \LL
\end{array}
\right)
\left(
\begin{array}{c}
  A_1\\
  A_2\\
  \vdots\\
  A_N\\
  \hline
  g_0\\
  \vdots\\ 
  g_{m-1}
\end{array}
\right)
=
\left(
\begin{array}{c}
B_1\\
B_2\\
\vdots\\
B_N\\
\hline
h_0\\
\vdots\\
h_{m-1}
\end{array}
\right).
$$
Here $N = \frac{(m-1+d)!}{(m-1)!d!} = \dim(\Pi_{m-1})$, 
and $P:\reals^N \to C^{\infty}(\partial\Omega,\reals^m)$ 
is the Vandermonde-style matrix whose $j^{\text{th}}$ column consists of the 
basic boundary operators 
applied to the $j^{\text{th}}$ basis element for 
$\Pi_{m-1}$, namely $(P)_{kj} = \lambda_k p_j$. Thus,
$$P \begin{pmatrix}
  A_1\\
  \vdots\\
  A_N
  \end{pmatrix} 
  :=
  \begin{pmatrix} A_1 \lambda_0p_1+\dots+A_N \lambda_0 p_N\\
  \vdots\\
  A_1 \lambda_{m-1} p_1 + \dots + A_N \lambda_{m-1} p _N
  \end{pmatrix}.
$$
The operator $P^{\t}:\bigl(\mathcal{D}'(\partial \Omega) \bigr)^m\to \reals^N$ is its natural transpose,
$$P^{\t} \begin{pmatrix} g_0\\ g_1\\ \vdots \\g_{m-1}\end{pmatrix} = \begin{pmatrix}
  \langle g_0,\lambda_0p_1 \rangle + \langle g_1,\lambda_1p_1 \rangle+ \dots +\langle g_{m-1},\lambda_{m-1} p_1 \rangle \\
  \langle g_0,\lambda_0p_2 \rangle   +   \langle g_1,\lambda_1 p_2 \rangle  +\dots+  \langle g_{m-1}, \lambda_{m-1} p_2 \rangle  \\
 \vdots \\
  \langle g_0,\lambda_0p_N \rangle   + \langle g_1,\lambda_1 p_N \rangle  +\dots +   \langle g_{m-1},\lambda_{m-1} p_N \rangle  
  \end{pmatrix}.
$$
 The function $A_1 p_1+A_2 p_2+\dots +A_N p_N+V_0 g_0 +\dots+V_{m-1} g_{m-1}$ solves the 
 Dirichlet problem with $N$ extra ``side conditions''. 
 The relevance of these extra conditions will be made clear in Section \ref{bounded_invertibility_of_L}. 

We restrict $\LLs$ to various products of Bessel potential spaces and recast the problem in the context of  reflexive Banach spaces. 
Thus we make the following definition.
\begin{definition}\label{spaces}
For $1<p<\infty$ and $s\in \reals$, let
\begin{eqnarray}
        X_{p,s}&:=& \prod_{j=0}^{m-1}H_p^{s+j}(\partial \Omega) \\
        X^{\sharp}_{p,s}&:=&\reals^N \times  X_{p,s}
\end{eqnarray}
Similarly, let
\begin{eqnarray}
	Y_{p,s}&:=& \prod_{j=0}^{m-1}H_p^{s-j}(\partial \Omega)\\
       Y^{\sharp}_{p,s}&:=&\reals^N \times Y_{p,s}
\end{eqnarray}
\end{definition}
\begin{remark} 
We have defined the Bessel potential space $H_p^{s}(\partial \Omega)$ in Section \ref{notation}.
We remark that these are smoothness spaces over the manifold $\partial \Omega$ which, 
are reflexive, the dual of 
$H_p^{s}(\partial \Omega)$  
being 
$H_{p'}^{-s}(\partial \Omega)$ 
under the bilinear form 
$ H_p^{s}(\partial \Omega)\times H_{p'}^{-s}(\partial \Omega) \to \comps:(g,h)\mapsto \langle g,h\rangle$
 inherited from the pairing $\langle \phi, \psi \rangle  = \int_{\partial \Omega} \phi(x) \psi(x) \dif x$ defined on 
test functions.
 
From this, we naturally identify the dual of $X_{p,s}$ with $Y_{p', -s}$ and vice versa. 
We have the identification $X_{p,s}^{\sharp}$ with $Y_{p', -s}^{\sharp}$ via the pairing
$$\langle(\vec{A},g),(\vec{B},h)\rangle = \langle g,h\rangle + \sum_{j=1}^{N} A_j B_j.$$ 
\end{remark}

\subsection{Bounded invertibility of $\LLs$}\label{bounded_invertibility_of_L}
The problem we now face is to show that the restriction of $\LLs$ to $X_{p,s}^{\sharp}$ maps onto 
$Y_{p,s+2m-1}^{\sharp}$, and that this map is boundedly invertible. To do this, we make use of the following three 
lemmas, which are proved in the coming subsections.

The first lemma concerns regularity of the operator $L$. It is a direct consequence of the mapping properties
of the constituent pseudodifferential operators $v_{k,j}$, and follows in a more-or-less immediate way
from Lemma \ref{smoothing_lemma}, which in turn, shows boundedness of $L^{\sharp}$ from $X_{p,s}^{\sharp} $ to $Y_{p,s}^{\sharp}$.
%%%%%%%%%%%%%%%%
%
%:Lemma psdo
%
%%%%%%%%%%%%%%%%
\begin{lemma}\label{psdo} For $1<p<\infty$ and $s\in \reals$, $\LLs$ 
is a bounded map from $X_{p,s}^{\sharp}$ to $Y_{p,s+2m-1}^{\sharp}$. 
\end{lemma}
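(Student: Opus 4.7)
The plan is to verify the stated mapping property block by block, using the self-transpose structure of $\LLs$ together with the already-stated (postponed) mapping property of the individual boundary operators $v_{k,j}^{\pm}$, namely Lemma \ref{smoothing_lemma}.

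First I would unpack the definition of $\LLs$ on an element $(\vec A, \bfg)\in X_{p,s}^{\sharp}$. The output has two blocks: a finite-dimensional piece $P^{\t}\bfg \in \reals^N$, and a function-valued piece $P\vec A + \LL \bfg \in Y_{p,s+2m-1}$. For the finite-dimensional piece, each coordinate is $\sum_{j=0}^{m-1}\langle g_j,\lambda_j p_i\rangle$; since each $\lambda_j p_i$ is a fixed element of $C^\infty(\partial\Omega)$, it lies in the dual $H_{p'}^{-(s+j)}(\partial\Omega)$, and the pairing is continuous. Hence the map $\bfg \mapsto P^{\t}\bfg$ is bounded from $X_{p,s}$ into $\reals^N$.

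Next I would bound the vector-valued piece componentwise. Fix $k\in\{0,\dots,m-1\}$. The $k$th entry of $P\vec A$ is $\sum_{i=1}^N A_i\,\lambda_k p_i$, which is a fixed smooth function on $\partial\Omega$ and therefore lies in $H_p^{s+2m-1-k}(\partial\Omega)$ with norm controlled by $|\vec A|$. The $k$th entry of $\LL\bfg$ is $\sum_{j=0}^{m-1} v_{k,j}\, g_j$. By Lemma \ref{smoothing_lemma}, applied with smoothness parameter $s+j$ in place of $s$, each $v_{k,j}$ sends $H_p^{s+j}(\partial\Omega)$ continuously into $H_p^{(s+j)+2m-1-j-k}(\partial\Omega)=H_p^{s+2m-1-k}(\partial\Omega)$, so the sum lies in $H_p^{s+2m-1-k}(\partial\Omega)$ with the required bound. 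Summing over $k$ and collecting with the earlier estimate for $P^{\t}\bfg$ yields the claimed boundedness $\LLs:X_{p,s}^{\sharp}\to Y_{p,s+2m-1}^{\sharp}$.

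Essentially nothing in this argument is deep; the entire statement is a repackaging of Lemma \ref{smoothing_lemma} in matrix form, together with the trivial fact that finite-rank operators built from $C^\infty$ data are bounded between any Bessel potential spaces. The only real ``obstacle'' is bookkeeping: one has to make sure the shifted indices in the domain factor $H_p^{s+j}$ and the codomain factor $H_p^{s+2m-1-k}$ align correctly with the smoothing index $2m-1-j-k$ delivered by $v_{k,j}$, so that $j$ cancels and the resulting order depends only on $k$. Since Lemma \ref{smoothing_lemma} itself is the genuine content (and is deferred to Section \ref{BOLC}), no further PDE-theoretic ingredients are needed here; the proof of Lemma \ref{psdo} reduces to this indexing verification.
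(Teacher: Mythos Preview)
Your proposal is correct and matches the paper's approach exactly: the paper's proof consists of the single sentence ``This follows directly from Lemma \ref{smoothing_lemma},'' and your write-up simply unpacks that sentence, supplying the index bookkeeping for the $v_{k,j}$ blocks and the trivial boundedness of the finite-rank polynomial blocks $P$ and $P^{\t}$.
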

%%%%%%%%%%%%%%%%
\begin{proof} 
This follows directly from Lemma \ref{smoothing_lemma}.
\end{proof}
%%%%%%%%%%%%%%%%%%%%%%%%
%
%%%%%%%%%%%%%%%%%%%%%%%%

The second lemma concerns the range of the map $L_{p,s} := \LLs\left|_{X_{p,s}^{\sharp}}\right.: X_{p,s}^{\sharp} \to Y_{p,s+2m-1}^{\sharp}$.
The following section will demonstrate that there is a near right inverse $R:Y_{p,s+2m-1}\to X_{p,s}$ 
(known as a {\em parametrix})
so that $\LL R = \mathrm{Id}+K$, where $K: Y_{p,s+2m-1} \to Y_{p,s+2m-1}$ is a compact operator. 
%%%%%%%%%%%%%%%%
%
%:Lemma closed
%
%%%%%%%%%%%%%%%%
\begin{lemma}\label{closed}  
For $1<p<\infty$ and $s\in \reals$,
$\LLs(X_{p,s}^{\sharp})$ is closed in $Y_{p,s+2m-1}^{\sharp}$.
\end{lemma}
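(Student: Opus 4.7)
The plan is to use the right parametrix for $\LL$ (promised in the discussion preceding the lemma and constructed in Section \ref{parametrix}) to show that $\LLs$ is lower semi-Fredholm; closed range will then follow immediately.

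First I would exploit the identity $\LL R = \mathrm{Id}+K$, with $K$ compact on $Y_{p,s+2m-1}$. Since $\mathrm{Id}+K$ is Fredholm of index zero (Riesz--Schauder), its range is closed in $Y_{p,s+2m-1}$ and has finite codimension. Because the range of $\LL$ contains the range of $\LL R = \mathrm{Id}+K$, and because any subspace of a Banach space containing a closed subspace of finite codimension must itself be closed, we conclude that $\LL: X_{p,s} \to Y_{p,s+2m-1}$ has closed range of finite codimension.

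Next I would examine the block structure of $\LLs$ and write $\LLs = T + F$, with
\[
T = \begin{pmatrix} 0 & 0 \\ 0 & \LL \end{pmatrix}
\quad\text{and}\quad
F = \begin{pmatrix} 0 & P^{\t} \\ P & 0 \end{pmatrix}.
\]
The range of $T$ is $\{0\} \oplus \LL(X_{p,s})$, which is closed in $Y_{p,s+2m-1}^{\sharp}$ with finite codimension (namely $N$ plus the codimension of $\LL(X_{p,s})$ in $Y_{p,s+2m-1}$); hence $T$ is itself lower semi-Fredholm. The off-diagonal perturbation $F$ factors through the finite-dimensional space $\reals^N$ in each of its two nontrivial blocks: $P^{\t}: X_{p,s} \to \reals^N$ and $P: \reals^N \to Y_{p,s+2m-1}$ (the latter lands in $C^{\infty}(\partial\Omega,\reals^m)$, hence in every $H_p^t$ needed). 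Consequently $F$ has rank at most $2N$ and is compact. The stability of the lower semi-Fredholm class under compact perturbation (a classical theorem of Kato) then gives that $\LLs$ is lower semi-Fredholm, and in particular $\LLs(X_{p,s}^{\sharp})$ is closed in $Y_{p,s+2m-1}^{\sharp}$, as desired.

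The only substantive obstacle is the parametrix identity $\LL R = \mathrm{Id}+K$ itself, which is the content of Section \ref{parametrix} and rests on the pseudodifferential calculus and ellipticity of the principal symbol; granted that, the argument above is pure functional analysis relying only on the block structure of $\LLs$ and the finite-rank nature of $P$ and $P^{\t}$.
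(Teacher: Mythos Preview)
Your argument is correct and follows essentially the same line as the paper's proof: both use the right parametrix identity $\LL R = \mathrm{Id}+K$ to show $\ran \LL$ has finite codimension (hence is closed) in $Y_{p,s+2m-1}$, and then pass to $\LLs$ by a finite-rank perturbation. The only difference is cosmetic: the paper handles the last step by noting that $\ran\LL + \ran P$ is closed (as a closed subspace plus a finite-dimensional one) and concludes directly, whereas you decompose $\LLs = T+F$ and invoke Kato's stability theorem for semi-Fredholm operators under compact perturbations---arguably a cleaner way to package the same idea.
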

%%%%%%%%%%%%%%%%
\begin{proof} Postponed until Section \ref{parametrix}.
\end{proof}
%%%%%%%%%%%%%%%%%%%%%%%%
%
%%%%%%%%%%%%%%%%%%%%%%%%
We remark that the theory of pseudodifferential operators on manifolds can be called upon to prove both Lemmas \ref{psdo} and 
\ref{closed}. 
Specifically, Lemma \ref{psdo} follows by showing that 
$\LL$ is a pseudodifferential operator of a certain order
and Lemma \ref{closed} follows by showing it is elliptic (elliptic pseudodifferential operators are Fredholm operators). 
This is essentially the approach we take in the coming subsections.

The third lemma shows injectivity of the operator $\LLs$. 
This is the moment where using $\LL$ is insufficient,
and the auxiliary polynomial operators $P$ and $P^{\t}$ must be used. 
Essentially, the injectivity of $\LL$ fails to hold because
the polyharmonic Dirichlet problem does not have unique solutions in the unbounded region $\reals^d\setminus \Omega$.
%%%%%%%%%%%%%%%%%%%%%%%%
%
%:Lemma oneone
%
%%%%%%%%%%%%%%%%%%%%%%%%
\begin{lemma}\label{oneone} 
$L_{p,s}$ is $1\!-\!1$. 
\end{lemma}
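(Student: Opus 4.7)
The plan is to show that $L^{\sharp}_{p,s}(A,g)=0$ forces $A=0$ and $g=0$. Set
$u := \sum_{i=1}^{N} A_i p_i + \sum_{j=0}^{m-1} V_j g_j = q + \phi \ast \mu,$
where $q := \sum_i A_i p_i\in \Pi_{m-1}$ and $\mu := \sum_{j=0}^{m-1} \Lambda_j^{\t}(g_j\cdot \delta_{\partial\Omega})$ is supported on $\partial\Omega$. The equation $P^{\t}g=0$ says $\langle\mu,p\rangle=0$ for every $p\in \Pi_{m-1}$ (since $\langle\mu,p_i\rangle=\sum_j\int_{\partial\Omega}g_j\lambda_jp_i\,\mathrm{d}\sigma=(P^{\t}g)_i$), while $PA+Lg=0$ says $\lambda_k u=0$ on $\partial\Omega$ for $k=0,\dots,m-1$; the traces are unambiguous because $k+j\le 2m-2$ in the relevant $v_{k,j}$, so $v_{k,j}^{+}=v_{k,j}^{-}$ by Remark \ref{self_transpose}. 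On $\Omega$, $u$ is polyharmonic with vanishing Dirichlet data, so by uniqueness for the interior polyharmonic Dirichlet problem, $u\equiv 0$ on $\Omega$.

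Next I run an energy identity on the annular exterior $B_R\setminus \overline{\Omega}$. Iterated integration by parts yields
\[
\int_{B_R\setminus\overline{\Omega}} \sum_{|\alpha|=m}\tbinom{m}{\alpha}(D^\alpha u)^2\,\mathrm{d}x
=(-1)^m\int_{B_R\setminus\overline{\Omega}} u\,\Delta^m u\,\mathrm{d}x + \sum_{j=0}^{m-1}(\pm 1)\int_{\partial(B_R\setminus\overline{\Omega})}\lambda_j u\,\lambda_{2m-j-1}u\,\mathrm{d}\sigma.
\]
The volume term vanishes since $\Delta^m u=0$ off $\partial\Omega$; the $\partial\Omega$-boundary contributions vanish since each pair $(\lambda_j u,\lambda_{2m-j-1}u)$ has the factor $\lambda_j u=0$ for $j\le m-1$. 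For the $\partial B_R$-terms, the orthogonality $\mu\perp\Pi_{m-1}$ combined with a Taylor expansion of $\phi(x-y)$ about $y=0$ (the same mechanism as in Lemma \ref{conv_with_FS}) gives $|D^k(\phi\ast\mu)(x)|=O(|x|^{m-d-k})$ (up to a logarithm) as $|x|\to\infty$; together with $|D^k q(x)|=O(|x|^{m-1-k})$ and $D^k q=0$ for $k\ge m$, each integrand $\lambda_j u\,\lambda_{2m-j-1}u$ at $|x|=R$ is $O(R^{-d})$, so the $\partial B_R$-integral is $O(R^{-1})\to 0$. Sending $R\to\infty$ gives $\int_{\reals^d\setminus\overline{\Omega}}\sum_{|\alpha|=m}\binom{m}{\alpha}(D^\alpha u)^2\,\mathrm{d}x=0$, so $u$ restricted to $\Omega^c$ is a polynomial of degree at most $m-1$.

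By Corollary \ref{boundary_smoothness}, $V_j g_j \in C^{m-1}(\reals^d)$ (for sufficiently smooth $g_j$; the general case is reduced to this by elliptic regularity of $L^{\sharp}$, from Section \ref{parametrix}), so the polynomial $u|_{\Omega^c}$ and its first $m-1$ derivatives vanish on $\partial\Omega$ (matching $u\equiv 0$ on $\Omega$). Along any normal ray $\gamma(s)=x_0+s\vec{n}(x_0)$ based at $x_0\in\partial\Omega$, $u\circ\gamma$ is a polynomial of degree $\le m-1$ in $s$ whose value and first $m-1$ derivatives at $s=0$ all vanish, so $u\circ\gamma\equiv 0$; hence $u$ vanishes on a tubular neighborhood of $\partial\Omega$ and, being polynomial on $\Omega^c$, vanishes identically on $\Omega^c$. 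Applying $\Delta^m$ to $u\equiv 0$ on $\reals^d$ yields $\mu=0$ (since $\Delta^m q=0$), hence $q=0$ and $A=0$. Finally, $\sum_j\Lambda_j^{\t}(g_j\delta_{\partial\Omega})=0$ tested against arbitrary $\varphi\in C^\infty(\overline{\Omega})$ gives $\sum_j\int_{\partial\Omega}g_j\lambda_j\varphi\,\mathrm{d}\sigma=0$, and since the Dirichlet trace $\varphi\mapsto(\lambda_j\varphi)_{j=0}^{m-1}$ onto $C^\infty(\partial\Omega)^m$ is surjective (a Dirichlet-system feature, cf. \cite[Lemma 2.3]{LionsMagenes}), each $g_j=0$.

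The chief obstacle is the vanishing of the boundary terms at $\partial B_R$ as $R\to\infty$: this hinges on the decay of $\phi\ast\mu$ available only because $\mu\perp\Pi_{m-1}$, which is precisely what the polynomial side conditions $P^{\t}g=0$ provide, and which is the reason the augmented system $L^{\sharp}$ (rather than $L$ alone) is required. A secondary technicality is the limited regularity of general $g\in X^{\sharp}_{p,s}$, which constrains the classical integration by parts; this can be handled by first carrying out the argument for smooth data and then promoting any element of $\ker L^{\sharp}_{p,s}$ to smooth data via elliptic regularity of $L^{\sharp}$ (Section \ref{parametrix}).
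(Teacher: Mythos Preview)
Your argument follows the same skeleton as the paper's: bootstrap to smooth $g_j$ via ellipticity, apply interior uniqueness, run an exterior energy identity on $B_R\setminus\overline{\Omega}$, and use the moment condition $P^{\t}g=0$ (i.e., $\mu\perp\Pi_{m-1}$) to kill the $\partial B_R$ contributions. Where you diverge is in the endgame. The paper uses the bilinear form $\mathcal{B}(u,u)$ (built from $\Delta^{m/2}u$ or $\nabla\Delta^{(m-1)/2}u$), obtains only $\Lambda_m u=0$ off $\partial\Omega$, and then invokes the jump relations of Corollary~\ref{jump_condition} to peel off $g_{m-1},g_{m-2},\dots,g_0$ one at a time. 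You instead use the full $m$th-order semi-norm $\sum_{|\alpha|=m}\binom{m}{\alpha}(D^\alpha u)^2$, which yields the stronger conclusion that $u$ is a polynomial of degree $\le m-1$ on the exterior; from there you finish via $C^{m-1}$ continuity across $\partial\Omega$ and surjectivity of the Dirichlet trace applied to $\mu=0$. Your route avoids the jump-condition machinery entirely, which is a genuine simplification.

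One correction: the displayed identity you write is not quite right. The boundary terms $\sum_j(\pm1)\int\lambda_j u\,\lambda_{2m-j-1}u$ belong to the Green's first identity for $\mathcal{B}$, not for the full semi-norm. When you integrate $\sum_{|\alpha|=m}\binom{m}{\alpha}\int(D^\alpha u)^2$ by parts $m$ times to reach $(-1)^m\int u\,\Delta^m u$, the boundary terms are sums of $\int_{\partial} n_i\,D^\beta u\,D^\gamma u$ with $|\beta|\le m-1$ and $|\beta|+|\gamma|=2m-1$; they are not expressible as products of the $\lambda_j$ operators. This does not damage your argument: on $\partial\Omega$ each such term vanishes because the factor $D^\beta u$ with $|\beta|\le m-1$ vanishes (the vanishing of $\lambda_0 u,\dots,\lambda_{m-1}u$ from the exterior side forces all derivatives of order $\le m-1$ to vanish there, by the Dirichlet-system property), and on $\partial B_R$ the same decay bookkeeping applies since the factor with $|\gamma|\ge m$ kills $D^\gamma q$ and leaves only $D^\gamma(\phi*\mu)=O(R^{m-d-|\gamma|})$. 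So the conclusion stands, but the formula should be corrected.
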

%%%%%%%%%%%%%%%%
\begin{proof} 
Postponed until Section \ref{Uniqueness}.
\end{proof}
%%%%%%%%%%%%%%%%%%%%%%%%
%
%%%%%%%%%%%%%%%%%%%%%%%%

Together, the previous three lemmata imply the following result, which is the key to solving the polyharmonic Dirichlet problem,
and consequently to obtain the desired integral representation.
%%%%%%%%%%%%%%%%%%%%%%%%
%
%:Proposition bounded_inverse
%
%%%%%%%%%%%%%%%%%%%%%%%%
\begin{proposition}\label{bounded_inverse}
%%%%%%%%%%%%%%%%
For $1<p<\infty$ and $s\in \reals$, the map $L_{p,s}=\LLs \left|_{X_{p,s}^{\sharp}}\right.$ is boundedly invertible from $X_{p,s}^{\sharp}$ 
to $Y_{p,s+2m-1}^{\sharp}$.
\end{proposition}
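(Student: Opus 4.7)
The plan is to combine the three preceding lemmas with a standard Banach space duality argument: bounded injectivity plus closed range plus dense range will give bounded invertibility by the open mapping theorem.

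First, boundedness of $L_{p,s}$ from $X_{p,s}^{\sharp}$ to $Y_{p,s+2m-1}^{\sharp}$ is exactly Lemma \ref{psdo}, and injectivity is Lemma \ref{oneone}. By Lemma \ref{closed}, the range $L_{p,s}(X_{p,s}^{\sharp})$ is closed in $Y_{p,s+2m-1}^{\sharp}$. It therefore suffices to prove surjectivity, and by the closed range theorem this reduces to showing that the transpose operator is injective.

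The plan for surjectivity is as follows. Using the duality identifications recorded after Definition \ref{spaces}, we have $(X_{p,s}^{\sharp})' \cong Y_{p',-s}^{\sharp}$ and $(Y_{p,s+2m-1}^{\sharp})' \cong X_{p',-s-2m+1}^{\sharp}$. I first compute the transpose $(L_{p,s})^{\t} : X_{p',-s-2m+1}^{\sharp} \to Y_{p',-s}^{\sharp}$ with respect to the natural pairings. The block structure
\[
\LLs = \begin{pmatrix} 0 & P^{\t} \\ P & \LL \end{pmatrix}
\]
is symmetric by construction, and the scalar operator $\LL$ is self-transpose by Remark \ref{self_transpose} (since $v_{k,j}^{\t} = v_{j,k}$ in the non-singular range that governs $\LL$). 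Consequently $(L_{p,s})^{\t}$ is again given by the same matrix of operators and therefore coincides with $L_{p',-s-2m+1}$ acting between $X_{p',-s-2m+1}^{\sharp}$ and $Y_{p',-s}^{\sharp}$. Now Lemma \ref{oneone}, applied with $p$ replaced by $p'$ and $s$ replaced by $-s-2m+1$, yields that $(L_{p,s})^{\t}$ is $1$-$1$. By the Hahn--Banach theorem, any continuous linear functional that annihilates $L_{p,s}(X_{p,s}^{\sharp})$ corresponds to an element of the kernel of $(L_{p,s})^{\t}$, hence vanishes. Thus $L_{p,s}(X_{p,s}^{\sharp})$ is dense in $Y_{p,s+2m-1}^{\sharp}$; combined with closedness from Lemma \ref{closed}, the operator $L_{p,s}$ is surjective.

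Finally, since $L_{p,s}$ is a continuous bijection between the reflexive Banach spaces $X_{p,s}^{\sharp}$ and $Y_{p,s+2m-1}^{\sharp}$, the open mapping theorem (equivalently, the bounded inverse theorem) guarantees that $L_{p,s}^{-1}$ is bounded, completing the proof. There is no real obstacle once the three lemmas are in hand; the only subtlety worth checking carefully is that the transpose is indeed of the same form $L_{p',-s-2m+1}^{\sharp}$, which is where it becomes important that Lemma \ref{oneone} holds for all $p\in(1,\infty)$ and all real $s$, so it may be invoked with the dual indices without any separate argument.
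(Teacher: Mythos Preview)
Your proof is correct and follows essentially the same route as the paper's: both use Lemma~\ref{psdo} for boundedness, Lemma~\ref{oneone} for injectivity, identify the transpose as $L_{p',1-(s+2m)}$ via the self-transpose property of $\LLs$, apply Lemma~\ref{oneone} at the dual indices to get injectivity of the transpose, and then combine this with the closed range from Lemma~\ref{closed} to conclude surjectivity and invoke the open mapping theorem. Your presentation is slightly more explicit about the Hahn--Banach step and the verification that the transpose has the claimed form, but the argument is the same.
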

\begin{proof}
It suffices to show that $L_{p,s}^{\sharp}$ is invertible from  $X_{p,s}^{\sharp}$ to $Y_{p,s+2m-1}^{\sharp}$;
the open mapping theorem then guarantees boundedness of the inverse.

Lemma \ref{psdo}, in conjunction with the definition of $\LLs$ and
the duality of the spaces $X_{p,s}^{\sharp}$ and $Y_{p, 1-(s+2m)}^{\sharp}$,
 indicates that $(L_{p,s})^{\t} = L_{p',1-(s+2m)}$. 
By Lemma \ref{oneone}, this operator is $1\!-\!1$ and we have that $\ker (L_{p,s})^{\t} = \{0\}$. 
Since the range of $L_{p,s}$ is closed, we have that 
$$
\mathrm{ran} (L_{p,s}) 
=
\overline{\mathrm{ran} (L_{p,s}) }
=  
\left(\ker (L_{p,s}^{\t})\right)_\perp 
=
\left(\ker (L_{p',1-(s+2m)} )\right)_\perp 
 = 
 Y_{p,s+2m-1}.
 $$
Consequently, $L_{p,s}$ is invertible.
\end{proof}
%%%%%%%%%%%%%%%%%%%%%%%%
%
%%%%%%%%%%%%%%%%%%%%%%%%

%%%%%%%%%%%%%%%%%%%%%%%%%
%
%section: psdo
%
%%%%%%%%%%%%%%%%%%%%%%%
\section{ Expressing Boundary Layer Potential Operators as Pseudodifferential Operators}
\label{Section psdo}
We continue our investigation of the boundary layer potential operators $V_j$, their boundary values 
$v_{k,j} = \lambda_{k} V_j$, and the full boundary integral operator $L$.
By changing variables 
so that  portions of the boundary $U\cap \partial \Omega$ are flattened,
we may express these as pseudodifferential operators. 
In particular, we can calculate the principal symbols of the boundary operators $v_{k,j}$,
the orders of which
(determined by the order of the principal symbol) 
determine their mapping properties, from which Lemma \ref{psdo} follows naturally.
We  use this calculation to demonstrate the ellipticity of $L$, which guarantees that it has closed range.

\subsection{\bf Background}
Before discussing pseudodifferential operators, we 
mention some other classes of operators which are useful for the theory.
A continuous linear operator  $K:\mathcal{E}'(\reals^d) \to \E(\reals^d)$ is  called a {\em smoothing} operator 
(other texts call it a regularizing or negligible operator). An operator $A:\D(U)\to \E'(U)$ is
{\em properly supported}  if also $A^{\t}:\D(U) \to \E'(U)$; 
by duality, it is clear that such an operator
is continuous also from $\E(U)$ to $\D'(U)$.

\subsubsection{Pseudo-differential operators on Euclidean domains}
Let us now briefly highlight some aspects of the theory of pseudodifferential operators -- these can be found
in a variety of sources (including \cite{Grubb,Horm3,StRay,Tayl,Trev}; this is but a small sampling of resources).

\begin{definition}\label{D:symbol}
Given an open subset $O$ of $\reals^d$ % 
for $p\in C^{\infty}(O\times \reals^d)$ %
we say that $p$ is in the symbol class $S_{1,0}^N(X)$ if for each pair of multi-integers 
$\alpha,\beta$ and each compact $K\subset U$ there is a constant $C_{\alpha,\beta,K}$ so that
$$|D_x^{\beta} D_{\xi}^{\alpha}(p(x,\xi))|\le C_{\alpha,\beta,K} (1+|\xi|)^{N-|\alpha|}$$
holds for all $x\in K$. 
\end{definition}

Given a symbol $p\in S_{1,0}^N(O)$, we can (initially) define an operator on test functions as
$$\Op(p){f}(x) := (2\pi)^{-d}\int_{\reals^d}e^{i x\cdot \xi} p(x,\xi) \widehat{{f}}(\xi) \dif \xi.$$

We have 
$\Op(p): \bigl(\mathcal{D}(O)\bigr) \mapsto \bigl(\mathcal{D}'(O)\bigr)$, 
although
the operator can be continuously extended to map 
$\mathcal{D}(O) \to \E(O)$ and $\mathcal{E}'(O) \to \mathcal{D}'(O)$ (cf. \cite[Theorem 1.5]{Tayl}). 
The operator (thus extended) is  a pseudodifferential operator of order $N$. 

Clearly, $S_{1,0}^N(O)\subset S_{1,0}^{N+1}(O)$  
and (because of mapping properties described in item 5 below)  the symbol class
$S^{-\infty}(U) =\bigcap_{N\in\ints}S_{1,0}^N(O)$  consists of operators   
which are smoothing.
For any $p\in S_{1,0}^N(O)$ there is a properly supported operator $P$ and a smoothing  operator 
$R = \Op(r)$, with $r\in S^{-\infty}(O)$  so that $\mathrm{Op}(p)  = P + R$ (cf., \cite[Proposition 7.8]{Grubb}).

We note that this class of operators includes linear partial differential operators: the operator 
$\sum_{\alpha=0}^N a_{\alpha} D^{\alpha}$ has symbol 
$p(x,\xi) = \sum_{|\alpha|=0}^N i^{|\alpha|} a_{\alpha}(x) \xi^{\alpha}$ 
(one checks readily that
$p\in S^N(\reals^d)$).
Another example is $J_s = (1-\Delta)^{s/2}$, which has symbol $(1+|\xi|^2)^{s/2}\in S^{s}(\reals^d)$ 
(this holds for all $s\in \reals$).

\begin{result}\label{results}
The following results hold for such operators:
\begin{enumerate}
\item  %1
Let $f$ be a distribution on $O$ and $P$ be a pseudodifferential operator. 
If $\Upsilon$ is the largest open set
on which $f$ is smooth (i.e., the complement of the singular support), $Pf$ is $C^{\infty}( \Upsilon)$ as well. 
Indeed, if $\tau$ and $\omega$ are smooth functions on $O$ for which $\supp{\tau}\subsetneq \{x\in O\mid \omega(x)=0\}$, 
then $f\mapsto \omega P(\tau f)$ is smoothing.
\item %2
Given a countable sequence of symbols $(p_j)_{j=0}^{\infty}$
with each $p_j\in S_{1,0}^{N_j}(O)$ and $N_j$ decreasing,  there is a symbol 
$p\in S_{1,0}^{N_0}(O)$ such that for every $M\in\nats$,
$ p-\sum_{j=0}^M p_j \in S_{1,0}^{N_{M +1}}(O).$ 
In this case, we write $p = \sum_{j=0}^{\infty} p_j$. 
\item %3
For symbols $a\in S_{1,0}^M(O)$ and $b\in S_{1,0}^N(O)$ 
for which one of $\Op(a)$ and $\Op(b)$ is properly supported, 
the composition $\Op(a) \Op(b)$ is a pseudodifferential operator of order $N+M$ and has symbol
$$
(a\odot b)(x,\xi)
= 
\sum_{|\alpha|=0}^{\infty}\frac{(-i)^{|\alpha|}}{\alpha !}D_{\xi}^{\alpha}a(x,\xi)D_x^{\alpha}b(x,\xi)$$
(with convergence of the series understood as in item 2). 
\item %4
The class of pseudodifferential operators is closed under diffeomorphism, and order is preserved.
Indeed, we have, for $\Phi:U\to O$, and symbol $p\in S_{1,0}^N(U)$, the operator $(\mathrm{Op}(p))^{\Phi}$ 
is a pseudodifferential operator, with symbol $p^{\Phi} \in S_{1,0}^N(O)$ given by
$$
p^{\Phi} (\Phi(x),\xi)= \sum_{\alpha \in \nats^d}\frac{1}{\alpha!} \phi_{\alpha}(x,\xi) D_{\xi}^{\alpha} p(x,(D\Phi)^{\t}\xi)
$$
Here $ \phi_{\alpha}(x,\xi)$ is a polynomial\footnote{Specifically, 
$\phi_{\alpha}(x,\xi) =[D_{y}^{\alpha} e^{i\langle (\Phi(y)-\Phi(x) -(D\Phi(x))(y-x),\xi\rangle }]\left|_{y=x}\right.$.} 
 in $\xi$
 of degree at most $|\alpha|/2$, and $\phi_0 = 1$.
\item %5
For a  symbol $p\in S_{1,0}^N(O)$, the operator $\Op(p)$ maps $H_{p,\mathrm{c}}^s(O)$ boundedly to 
$H_{p,\mathrm{loc}}^{s-N}(O)$ for all $s\in \reals$, $1<p<\infty$.
\end{enumerate}
\end{result}
Item 1 is \cite[Ch.2, Theorem 2.1]{Tayl} as well as \cite[Proposition 7.11]{Grubb}. Item 2 is \cite[Ch. 2, Theorem 3.1]{Tayl} as well as \cite[Lemma 7.3]{Grubb}.
Item 3 can be found in \cite[Ch. 2, Section 4]{Tayl} as well as \cite[Theorem 7.13]{Grubb}. 
Item 4  is \cite[Ch. 2, Theorem 5.1]{Tayl} as well as \cite[Theorem 8.1]{Grubb}.
Item 5  follows from \cite[Ch. 11, Theorem 2.1]{Tayl} .

\subsubsection{Polyhomogeneous operators and ellipticity}
A symbol $p\in S_{1,0}^N(O)$ is {\em positively homogeneous of order $N$} if it satisfies, 
for $\lambda\ge 1$ and $|\xi|\ge 1$,
\begin{equation}\label{weak_polyhomogeneity}
p(x,\lambda \xi) = \lambda^{N}p_j(x,\xi).
\end{equation}
A symbol  $p\in S_{1,0}^N(O)$
is called {\em polyhomogeneous} if it 
 has an (asymptotic) expansion  
$p = \sum_{j=0}^{\infty} p_j$, where each $p_j \in S_{1,0}^{N-j}(U)$
and is positively homogeneous of order $N-j$. 
\begin{definition}
Let 
$S^N(U)$ 
denote the set of  polyhomogeneous symbols of order $N$. Furthermore, for $m\in \nats$,
let $S^N(U,m)$ denote the set of matrix valued symbols $p = (p_{j,k})_{j,k}$ where each $p_{j,k}\in S^N(U)$.
\end{definition}
In the case of a matrix valued symbol, $\Op(p)$ is  defined as
$\Op(p)\mathbf{f}(x) = (2\pi)^{-d}\int_{\reals^d}e^{i x\cdot \xi} p(x,\xi) \widehat{\mathbf{f}}(\xi) \dif \xi$,
where  
$\widehat{\mathbf{f}}(\xi) =  [\widehat{f_0}, \dots \widehat{f_{m-1}}]^{\t}$ 
is the entry-wise Fourier transform 
of $\mathbf{f} = [f_0, \dots f_{m-1}]^{\t}$ and
$p(x,\xi) \widehat{\mathbf{f}}(\xi) $ is a matrix-vector product.
Similarly, $p \odot q (x,\xi)=\sum_{|\alpha|=0}^{\infty}\frac{(-i)^{|\alpha|}}{\alpha !}D_{\xi}^{\alpha}p(x,\xi)D_x^{\alpha}q(x,\xi)$ involves matrix products.
This class is closed under $\odot$ and addition.
Differential operators have symbols which are polynomial in $\xi$, thus their symbols are polyhomogeneous. 

For a pseudodifferential operator $P$ with   symbol $p= \sum_{j=0}^{\infty} p_j \in S^N(X,m)$, the {\em principal symbol} 
is  ${p_0} \in S^N(X,m)$. 
Although this is an equivalence class of symbols,  we make the slight abuse of terminology  by referring to ``the'' principal symbol 
and we denote it by $\sigma(P):=p_0$. 
We note especially that the values of $\sigma(P)$ for small values of $\xi$ are unimportant, and so we generally
give $\sigma(P)(x,\xi)$ only for $|\xi|\ge 1$.

\paragraph{Ellipticity and parametrices}
The property that ensures existence of a parametrix is {\em ellipticity} of the symbol. 
We use the following definition, which is restrictive 
-- a more robust definition would be valid for symbols in $S_{1,0}^N(O)$ -- 
but it is sufficient for our purposes.
\begin{definition}
A  symbol $p\in S^N(U,m)$ is elliptic 
if
$p_0(x,\xi)$ is non-singular for $|\xi|\ge 1$.
\end{definition}
Note in particular that if $p_0$ is as positively homogeneous, scalar symbol of order $N$ which does not vanish, then there
is a constant $c>0 $ so that $c |\xi|^N \le |p_0(x,\xi)|$, and therefore $ |\xi|^{-N} |p(x,\xi)|$ is bounded below for $|\xi|$ sufficiently large.
The following consequence of ellipticity is a simplification (sufficient for our purposes) of \cite[Theorem 7.18]{Grubb}.

\begin{lemma}\label{local_parametrix_lemma}
When $p$ is elliptic 
there is a properly supported pseudodifferential operator $Q$ (with symbol $q\in S^{-N}(O,m)$, modulo $S^{-\infty}(O,m)$) so that 
$Q\Op(p) - \mathrm{Id}$ and $\Op(p) Q - \mathrm{Id}$ are smoothing operators.
\end{lemma}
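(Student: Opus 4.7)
The plan is to build the parametrix by the standard asymptotic inversion of elliptic symbols, organized in three stages: construct a first approximate inverse from the principal symbol, improve it iteratively to kill the remainder in lower and lower orders, then sum the corrections asymptotically and adjust by a smoothing operator to make the result properly supported.

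First I would fix a smooth cutoff $\chi \in C^{\infty}(\reals^d)$ with $\chi(\xi)=0$ for $|\xi|\le 1/2$ and $\chi(\xi)=1$ for $|\xi|\ge 1$, and set $q_0(x,\xi):=\chi(\xi)\,\bigl[p_0(x,\xi)\bigr]^{-1}$. Ellipticity makes $p_0(x,\xi)$ invertible on $\{|\xi|\ge 1\}$, and on that region the positive homogeneity of order $N$ of $p_0$ transfers (by Cramer's rule) to positive homogeneity of order $-N$ of $p_0^{-1}$. The main technical check at this stage — which I expect to be the chief obstacle — is that the entries of $p_0^{-1}$, written as cofactors divided by $\det p_0$, satisfy the symbol estimates of Definition \ref{D:symbol} with index $-N$ uniformly on compact subsets of $O$; this follows from the matrix identity $D(p_0^{-1}) = -p_0^{-1}(Dp_0)p_0^{-1}$ (applied recursively to all mixed derivatives in $x,\xi$), the homogeneity-based lower bound $|\det p_0(x,\xi)|\ge c_K |\xi|^{mN}$ for $x\in K$ and $|\xi|\ge 1$, and the symbol estimates already available for $p_0$. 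This gives $q_0\in S^{-N}(O,m)$.

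Next I would iterate. By the matrix version of the composition formula (item 3 of Result \ref{results}), applied after replacing $\Op(p)$ by a properly supported representative modulo smoothing, the operator $\Op(q_0)\Op(p)$ has symbol $q_0\odot p$ whose leading term is $q_0 p_0 = \mathrm{Id}$ for $|\xi|\ge 1$, so $r_1 := q_0\odot p - \mathrm{Id}\in S^{-1}(O,m)$. Setting $q_1 := -q_0\odot r_1 \in S^{-N-1}(O,m)$ yields $(q_0+q_1)\odot p - \mathrm{Id}\in S^{-2}(O,m)$. Continuing, I obtain $q_j\in S^{-N-j}(O,m)$ with $(q_0+\dots+q_k)\odot p - \mathrm{Id}\in S^{-k-1}(O,m)$ for every $k$. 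Applying item 2 of Result \ref{results} entry-wise produces a symbol $q\in S^{-N}(O,m)$ with $q\sim\sum_j q_j$, and then $q\odot p - \mathrm{Id}\in S^{-\infty}(O,m)$, so $\Op(q)\Op(p)-\mathrm{Id}$ is smoothing. Using the fact recalled just before Result \ref{results} that every pseudodifferential operator is the sum of a properly supported operator and a smoothing one, I replace $\Op(q)$ by a properly supported $Q$ with the same asymptotic symbol; the smoothing difference is absorbed into the already-smoothing remainder.

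Finally, I would repeat the construction on the right — the same iteration, reading $p\odot q$ in place of $q\odot p$ — to produce a properly supported $Q'$ with $\Op(p)Q'-\mathrm{Id}$ smoothing. The standard two-sided parametrix argument
\[
Q \;=\; Q\,\bigl(\Op(p)Q'\bigr) + Q\,\bigl(\mathrm{Id}-\Op(p)Q'\bigr) \;=\; \bigl(Q\,\Op(p)\bigr)Q' + (\text{smoothing}) \;=\; Q' + (\text{smoothing})
\]
shows that $Q$ is also a right parametrix, so both $Q\Op(p)-\mathrm{Id}$ and $\Op(p)Q-\mathrm{Id}$ are smoothing, which is the desired conclusion.
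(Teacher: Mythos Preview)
Your proof is correct and follows essentially the same standard parametrix construction as the paper: invert the principal symbol after cutting off, iterate to kill the remainder (the paper packages this as a Neumann series $\sum_{j\ge 0}(-r)^{\odot j}\odot\tilde q$ rather than your recursive $q_j$, but the two are equivalent), sum asymptotically, make the result properly supported, and use the associativity argument to identify left and right parametrices. One small slip: your cutoff $\chi$ is nonzero on $1/2<|\xi|<1$, where the paper's definition of ellipticity does not guarantee $p_0(x,\xi)$ is invertible, so $q_0$ as written may not be defined there; either shift the cutoff so that $\chi=0$ for $|\xi|\le 1$, or (as the paper does) let the cutoff depend on $(x,\xi)$ and vanish on a neighborhood of the singular set of $\det p_0$.
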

\begin{proof} 
We begin by constructing a left parametrix $Q$, so that $Q\Op(p) - \mathrm{Id}\sim 0$. 
Constructing a right parametrix follows with an obvious modification.
The identification of right and left parametrices (modulo a smoothing operator) follows by associativity.

Let $\psi:O\times \reals^d\to [0,1]$ be a smooth function in $ C^{\infty}(O\times \reals^d)$ 
which satisfies that $\psi(x,\xi) = 0$  in the singular set 
$\{(x,\xi)\in O\times \reals^d\mid \det({p_0}(x,\xi) ) = 0\}$, 
and 
so that $\psi(x,\xi) =1$ outside of a neighborhood of this set. 
Let 
$\tilde{q}(x,\xi) = {\psi(x,\xi)}\bigl({p_0}(x,\xi)\bigr)^{-1}$, 
and note that 
 $\tilde{q}$ is positive homogeneous by ellipticity of ${p_0}$ and the product rule. 
The product
$\tilde{q}\odot p=1+r$ is in $S^{0}(O,m)$, 
with  $r = \sum_{|\alpha|=1}^{\infty}\frac{(-i)^{|\alpha|}}{\alpha !}D_{\xi}^{\alpha}\tilde{q}(x,\xi)D_x^{\alpha}{p}(x,\xi)$.
 Indeed,  we have $D_{\xi}^{\alpha}\tilde{q}(x,\xi)D_x^{\alpha}{p}(x,\xi)\in S^{-N-|\alpha| +N}(O,m)$ for 
each $|\alpha|\ge 1$,
so  $r  \in S^{-1}(O,m)$.
The symbol $r^{\odot j} := r\odot \dots \odot r$ ($j$-fold product)  resides in $S^{-j}(O,m)$, and the  
Neumann series $\sum_{j=0}^{\infty}  \bigl(-r\bigr)^{\odot j}\in S^0(O,m)$, give  the desired parametrix symbol.
The desired parametrix is 
 $Q \sim \mathrm{Op}\bigl(\sum_{j=0}^{\infty}  (-r)^{\odot j} \odot \tilde{q}\bigr) $.
\end{proof}

It follows from the construction  that if $\sum_{j=0}^N p_j$ 
is the symbol of an elliptic differential operator (with $p_j\in S^{N-j}(O)$), 
then the parametrix $Q$ of $\mathrm{Op}(p)$ has  symbol $q = \sum_{j=0}^{\infty} q_j\in S^{-N}(O)$.
We can say more, however: that each term $q_j$ is rational in $\xi$.

\begin{lemma}\label{phg_parametrix}
Suppose $p$   is the symbol of a (scalar) elliptic differential operator of order $N$.
Then its parametrix $Q = \Op(q)$ is polyhomogeneous, with $q = \sum_{j=0}^{\infty}q_j$.
Moreover,  for every $j$, $q_j$ is positively homogeneous of order $-N-j$, 
and for $|\xi|\ge 1$, $\xi\mapsto q_j(x,\xi)$ is a rational function. 
\end{lemma}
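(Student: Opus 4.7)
The plan is to derive the homogeneous components $q_j$ by the standard recursive matching procedure in the composition formula, and to check that rationality in $\xi$ is propagated through the recursion. Since $p$ is the full symbol of a differential operator of order $N$, it is polynomial in $\xi$ and decomposes as $p(x,\xi)=p_0(x,\xi)+p_1(x,\xi)+\cdots+p_N(x,\xi)$, where $p_k$ is homogeneous of degree $N-k$ in $\xi$; ellipticity says $p_0(x,\xi)\ne 0$ for $\xi\ne 0$. Fix once and for all a cutoff $\chi\in C^{\infty}(\reals^d)$ with $\chi\equiv 0$ near the origin and $\chi\equiv 1$ for $|\xi|\ge 1$, which will be used to convert each homogeneous $q_j$ into a bona fide symbol in $S^{-N-j}(O)$ without altering values on $|\xi|\ge 1$.

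The recursion is obtained by matching terms of each homogeneous order in the formal identity $q\odot p\sim 1$ (Result \ref{results}(3)). Since $D_\xi^\alpha q_j$ has degree $-N-j-|\alpha|$ and $D_x^\alpha p_k$ has degree $N-k$, the part of $q\odot p$ homogeneous of degree $-\ell$ must satisfy
$$\sum_{j+k+|\alpha|=\ell}\frac{(-i)^{|\alpha|}}{\alpha!}\,D_\xi^\alpha q_j(x,\xi)\,D_x^\alpha p_k(x,\xi)=\delta_{\ell,0}.$$
The equation for $\ell=0$ forces $q_0=1/p_0$, which is rational in $\xi$ and positively homogeneous of degree $-N$ on $|\xi|\ge 1$. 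For $\ell\ge 1$, isolating the $(j,k,|\alpha|)=(\ell,0,0)$ term gives the recursion
$$q_\ell(x,\xi)=-\frac{1}{p_0(x,\xi)}\sum_{\substack{j+k+|\alpha|=\ell\\ j<\ell}}\frac{(-i)^{|\alpha|}}{\alpha!}\,D_\xi^\alpha q_j(x,\xi)\,D_x^\alpha p_k(x,\xi).$$

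A straightforward induction now shows that each $q_j$ is rational in $\xi$ on $|\xi|\ge 1$ and positively homogeneous of degree $-N-j$: the $\xi$-dependence of each polynomial $D_x^\alpha p_k$ is unaffected by the $x$-differentiation, rationality of the $q_j$ with $j<\ell$ is preserved under $D_\xi^\alpha$, and division by $p_0$ keeps the expression rational, while the homogeneous degrees add correctly to $-N-\ell$. Asymptotic summation of $\chi(\xi)q_j(x,\xi)$ (Result \ref{results}(2)) yields a symbol $q\in S^{-N}(O)$ whose operator is a parametrix for $\Op(p)$. Because any two parametrices of an elliptic operator differ by a smoothing operator, this $q$ agrees modulo $S^{-\infty}(O)$ with the parametrix constructed in Lemma \ref{local_parametrix_lemma}, so the homogeneous terms produced above are exactly its polyhomogeneous components. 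The only real obstacle is the bookkeeping: carefully tracking the interplay between globally-defined symbols and their $\xi$-homogeneous representatives (meaningful only for $|\xi|\ge 1$), which is handled once and for all by the cutoff $\chi$ together with the convention of (\ref{weak_polyhomogeneity}).
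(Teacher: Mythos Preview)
Your proposal is correct and follows essentially the same approach as the paper: both proofs decompose $p$ into its homogeneous parts, set $q_0=1/p_0$, derive the same recursion for $q_\ell$ by matching homogeneous degrees in $q\odot p\sim 1$, and then verify rationality and homogeneity by induction. Your additional remarks on the cutoff $\chi$ and on uniqueness of the parametrix modulo $S^{-\infty}$ are standard and do not depart from the paper's argument.
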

\begin{proof}
We write $p = \sum_{j=0}^N p_j$, so that each $p_j\in S^{N-j}$ 
is a homogeneous polynomial of degree $N-j$, and therefore satisfies (\ref{weak_polyhomogeneity}). 

The  terms of $q$ can be determined via the product formula 
$ \left(\sum_{j=0}^{\infty} q_j\right) \odot  \left(\sum_{j=0}^N p_j \right)= 1$.
Namely, after rearranging terms, we have the equation
$$  \left(\sum_{j=0}^{\infty} q_j\right) \odot  \left(\sum_{j=0}^N p_j \right) 
= 
\sum_{j=0}^{\infty} \sum_{|\alpha|+k+\ell = j} \frac{(-i)^{|\alpha|}}{\alpha!}D_{\xi}^{\alpha}q_k(x,\xi) D_x^{\alpha}p_{\ell}(x,\xi).
$$
With the aid of a cutoff function, set $q_0 (x,\xi) = (p_0(x,\xi))^{-1}$  for $|\xi|\ge 1$, 
and note that for $|\xi|\ge 1$, this is  rational and positively homogeneous of order $-N$.
Each term $ \frac{(-i)^{|\alpha|}}{\alpha!}D_{\xi}^{\alpha}q_k(x,\xi) D_x^{\alpha}p_{\ell}(x,\xi)$  is a symbol of order 
$-(k+\ell+|\alpha|)=-j$.
Proceed by induction on $j$, 
setting (for $|\xi|\ge 1$)
\begin{equation}\label{parametrix_expansion}
q_j (x,\xi) = -(p_0(x,\xi))^{-1} \sum_{k=0}^{j-1}\left( \sum_{\ell+|\alpha| 
= 
j-k}\frac{(-i)^{|\alpha|}}{\alpha!}D_{\xi}^{\alpha}q_k(x,\xi) D_x^{\alpha}p_{\ell}(x,\xi)\right).
\end{equation}
Then $D_{\xi}^{\alpha}q_k(x,\xi)$ is rational and positively homogeneous of order $-N -k -|\alpha|$, while 
$ D_x^{\alpha}p_{\ell}(x,\xi)$ is polynomial and positively homogeneous of order $N-\ell$ in $\xi$. 
Thus $\frac{(-i)^{|\alpha|}}{\alpha!}D_{\xi}^{\alpha}q_k(x,\xi) D_x^{\alpha}p_{\ell}(x,\xi)$ is positive homogeneous of order
$-k - \ell - |\alpha| = -j$ for $k<j$, so  $q_j$ is positively homogeneous of order $-N -j$.
It is likewise rational as a sum of rational functions.
\end{proof}

 \subsection{Expression of operators in coordinates}

In this section, we express the basic operators under consideration in normal and tangential coordinates near the boundary.
Namely, we consider a map $\Psi':O'\to U'$, with $O'\subset \reals^{d-1}$ and $U'\subset \partial \Omega$ as described in Section \ref{notation}.
We calculate the effect of the diffeomorphism 
$\Psi:O\to U$ given in (\ref{explicit_normal_tangential}) on
the Laplacian, the boundary operators $\Lambda_j$, and 
the fundamental solution of $\Delta^m$. 
Finally, we use this to analyze the boundary layer potential operators $V_j$.  

\paragraph{Laplace operator} 
From the decomposition (\ref{Loc_Lap}), the principal symbol for ${\Delta^{\Psi}}$ is 
$$
\sigma({\Delta}^{\Psi})(\bfx,\xi) =
\sigma({\Delta_{t}}^{\Psi} + 
\frac{\partial^2}{\partial x_d^2})(\bfx,\xi) 
=
-\left(\xi_d^2+\sum_{j=1}^{d-1} \sum_{k=1}^{d-1} 
 \sfg^{i,j}(\bfx)\xi_j \xi_k\right),$$
and because of the positivity of the first (i.e., least) eigenvalue of $\sfg^{-1}$ 
we see that ${\Delta}^{\Psi}$ and $({\Delta}^m)^{\Psi}$ 
are elliptic (of order 2 and $2m$ respectively). 
Writing $\bfeta  = (\xi_1,\dots,\xi_{d-1})$, we denote
 $$
 \sfd(\bfx, \bfeta) :=\sum_{j=1}^{d-1} \sum_{k=1}^{d-1} 
 \sfg^{j,k}(\bfx)\xi_j \xi_k,
 $$
 which allows us to write
 $$\sigma\bigl(({\Delta})^{\Psi}\bigr)(\bfx,\xi) = -\left(\xi_d^2+  \sfd(\bfx, \bfeta)  \right).$$
 At times, we will consider $\sfd|_{O'\times \reals^d}$, 
 and we express this restriction as $ \sfd(\bfy, \cdot)$ which simply means $\sfd(\bfx,\cdot)$ with $x_d=0$.

\paragraph{Normal derivative} 
The principal symbol of  $({ D_{\vec{n}}^{\t}})^{\Psi}$ is
$\sigma\bigl(({ D_{\vec{n}}^{\t}})^{\Psi}\bigr)(\bfx,\xi) = - i \xi_d$.
We can also express the principal symbol of the differential operator $\Lambda_j^{\t}$ -- 
the adjoint of the operator defined
in (\ref{extended_operator_def}) -- as
\begin{equation}\label{boundary_diff_op_ps}
\sigma\bigl((\Lambda_j^{\t} )^{\Psi})\bigr)(\bfx,\xi)=
\begin{cases}
(-1)^{\frac{j}{2}}
  \left(\xi_d^2+
   \sfd(\bfx, \bfeta) 
\right)^{ j/2}& j \ \text{is even}\\
(-1)^{\frac{j+1}{2}} i\xi_d
 \left(\xi_d^2
 + \sfd(\bfx, \bfeta) 
\right)^{ (j-1)/2}& j \ \text{is odd}.
\end{cases}
\end{equation}

\subsubsection{The fundamental solution to $\Delta^m$ in local coordinates} 
The solution operator, $f\mapsto \phi*f$, for $\Delta^m$ in $\reals^d$ 
 is a Fourier multiplier with symbol $\widehat{\phi}(\xi) = |\xi|^{-2m}$ 
(at least, when considering distributions supported on $\reals^d\setminus \{0\}$).
If not for its behavior near $\xi=0$
it would be in
$S^{-2m}(\reals^d)$. 
This is easily fixed by making the decomposition
$\phi*f = Ef +Kf$, 
into a properly supported pseudodifferential operator and a smoothing operator.

Note that  the formula $ \Delta^m \phi* g= g =  \phi*(\Delta^m g)$ is 
valid for test functions $g\in\D(\reals^d)$ which satisfy $g\perp \Pi_{2m}$. 
Thus 
$({\Delta}^m)^{\Psi} (E)^{\Psi}$ and $ (E)^{\Psi} ({\Delta}^m)^{\Psi} $  
both equal the identity, modulo addition of a smoothing operator.
It follows that 
$E^{\Psi}$ is 
a parametrix for  
$({\Delta}^m)^{\Psi}$, the $m$-fold composition of
the operator $({\Delta})^{\Psi}$ from (\ref{Loc_Lap}) on $O$ (derived from $\Delta^m$).
By Lemma \ref{phg_parametrix}, $E^{\Psi}$ has a polyhomogeneous symbol $\sum_{j=0}^{\infty}e_j(\bfx,\bfxi)$;
 we can express its principal symbol   as
\begin{equation}\label{fs_ps}
\sigma(E^{\Psi})(\bfx,\bfxi) = e_0(\bfx,\bfxi) = (-1)^m \left(\xi_d^2+
 \sfd(\bfx, \bfeta) 
 \right)^{-m}.
 \end{equation}
 \begin{remark}
 We note that the symbol for $E^{\Psi}$ could be obtained by the change of variables formula in item 4 of Result \ref{results}.
In particular,   the principal symbol could have been determined the transformation for $p\in S^m$
to 
$
p(\Psi (\bfx), {\bf M}(\bfx)\bfxi),
$  
where ${\bf M}(\bfx) = (\bigl[D\Psi(\bfx)\bigr]^{-1})^{\t}$. 
The above result gives 
$e_0(\bfx,\bfxi) =- |{\bf M}(\bfx)\bfxi|^2 =
-\bfxi^{\t} \sfg^{-1}(\bfx)\bfxi$,
since  the symbol
for the Laplacian in standard coordinates is $p(u,\zeta)  = -|\zeta|^2$
and
the  Gram matrix is $[D\Psi(\bfx)]^{\t}D\Psi(\bfx)$ 
and, so, the inverse Gram matrix is 
$\sfg^{-1}(\bfx)= {\bf M}(\bfx)^{\t}{\bf M}(\bfx)$. 
\end{remark}
\paragraph{\bf The boundary layer potential operator in local coordinates}
To describe the coordinate representation of the operators 
$g\mapsto \Lambda_k V_j g= \Lambda_k \phi* \bigl(\Lambda_j^{\t} g \cdot \delta_{\partial \Omega}\bigr)$, 
we  focus on the  the coordinate version  of  
$(\Lambda_k) E (\Lambda_j^{\t})$,
since this differs from the map 
$f\mapsto \Lambda_k  \phi* ( \Lambda_j^{\t} f) $  
 by a smoothing operator. 
It follows that, it too is polyhomogeneous (as  a product of polyhomogeneous operators):
$(\Lambda_k) E (\Lambda_j^{\t}) = \mathrm{Op}(p)$, with $p= \sum_{\ell=0}^{\infty}p_\ell$, and $p_\ell\in S^{j+k-2m-\ell}(O)$.
Writing $n = j+k$, its  principal symbol is determined by combining (\ref{boundary_diff_op_ps}) and (\ref{fs_ps}):
$$
\sigma\left(
\bigl(\Lambda_k
E
 \Lambda_j^{\t}\bigr)^{\Psi}
\right)(\bfx,\bfxi) 
 =
(-1)^{m-\frac{n}{2}} \begin{cases}
\frac{1}{\left(\xi_d^2+
 \sfd(\bfx, \bfeta)
 \right)^{m-\frac{n}{2}}}&\quad j, k \text{ are even, }\\
 \mbox{}&\mbox{}\\
 \frac{\xi_d^2}{\left(\xi_d^2+
  \sfd(\bfx, \bfeta)
 \right)^{m+1-\frac{n}{2}}}&\quad j, k \text{ are odd, }\\
 \mbox{}&\mbox{}\\
\frac{i(-1)^{\ell}\xi_d}{\left(\xi_d^2+
  \sfd(\bfx, \bfeta)
 \right)^{ m-\frac{n-1}{2}}}&\quad n \text{ is odd.} 
 \end{cases}
$$

%
%%%%%%%%%%%%%%%%%%%%%%%%%%%%%%%%%%%%%%%%%%%%%
%
%
%
%
%
%%%%%%%%%%%%%%%%%%%%%%%%%%%%%%%%%%%%%%%%%%%%%
\subsection{Boundary operators in local coordinates}\label{BOLC}
The expression of $(\Lambda_k E \Lambda_j^{\t})^{\Psi}$ as the polyhomogeneous operator $p$
 permits us to write 
$(v_{k,j}^{\pm})^{\Psi}$
as an operator from $\D(O')$ to $\E(O')$.
To this end, define the operator $\widetilde{v_{k,j}^{\pm}} $ as 
$$\widetilde{v_{k,j}^{\pm}} g (\bfy)
  :=
  \lim_{x_d\to 0^{\pm}}  (\Lambda_k E  \Lambda_j^{\t})^{\Psi} (g\cdot \delta_{\reals^{d-1}}) (\bfx).$$
  That this is well defined for 
  smooth $g$
  is an immediate consequence of Corollary \ref{boundary_smoothness};
  indeed, we can 
 write $(v_{k,j}^{\pm})^{\Psi} $ as the sum of the
  operator $ \widetilde{v^{\pm}}_{k,j}$ and a smoothing operator.
  Namely
 \begin{equation}\label{pdbo_in_coords}
 (v_{k,j}^{\pm})^{\Psi}g(\bfy)
  =
   \widetilde{v_{k,j}^{\pm}} g (\bfy)
+\lim_{x_d\to 0}\left[  
 { \Lambda_{k}}^{\Psi} K  {\Lambda_j^{\t}}^{\Psi} \bigl(g \cdot\delta_{\reals^{d-1}}\bigr)
 \right](\bfx).
 \end{equation}
Note that when $j+k\le 2m-2$, 
$ (v_{k,j}^{+})^{\Psi}g(\bfy)=  (v_{k,j}^{-})^{\Psi}g(\bfy)$,
so we simply write  $\widetilde{v_{k,j}^{\pm}} g (\bfy) =\widetilde{v_{k,j}} g (\bfy)$.
 
 The following lemma shows that $\widetilde{v_{k,j}^{\pm}}$, and hence $v_{k,j}^{\pm}$ can be extended to distributions.
It shows, roughly, that $(\Lambda_k E  \Lambda_j^{\t})^{\Psi}$ has the ``transmission property'',
 and will be clear to those familiar with this subject. 
 In this case, it is fairly easy to demonstrate, since we are considering
  symbols with convenient analytic extension. 
 The structure of this proof (especially Case 2) follows Section 18.2 of \cite{Horm3}, 
 although by dealing with classical symbols, it is greatly simplified.
\begin{lemma}
For any $j,k\in \nats$, $ \widetilde{v_{k,j}^{\pm}}$ is a polyhomogeneous pseudodifferential operator of order
$j+k-2m+1$.% 
\end{lemma}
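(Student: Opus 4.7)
My plan is to compute the symbol of $\widetilde{v_{k,j}^{\pm}}$ explicitly by integrating out the normal frequency variable $\xi_d$ via residue calculus, applied term-by-term to the polyhomogeneous expansion $p = \sum_{\ell\ge 0} p_\ell$ of $(\Lambda_k E \Lambda_j^{\t})^{\Psi}$. Because $\widehat{g\cdot \delta_{\reals^{d-1}}}(\bfeta,\xi_d) = \widehat{g}(\bfeta)$ is independent of $\xi_d$, Fubini gives
\begin{equation*}
\Op(p)(g\cdot \delta_{\reals^{d-1}})(\bfy,x_d) = (2\pi)^{-(d-1)}\int_{\reals^{d-1}} e^{i\bfy\cdot\bfeta}\, r(\bfy,x_d,\bfeta)\, \widehat{g}(\bfeta)\, \dif\bfeta,
\end{equation*}
where $r(\bfy,x_d,\bfeta) := (2\pi)^{-1}\int_{\reals} e^{ix_d \xi_d}\, p(\bfy,x_d,\bfeta,\xi_d)\, \dif\xi_d$. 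If one can identify $\tp^{\pm}(\bfy,\bfeta) := \lim_{x_d\to 0^{\pm}} r(\bfy,x_d,\bfeta)$ as a classical symbol of order $j+k-2m+1$ in $\bfeta$, then the lemma follows, modulo the already-smoothing $R$-correction in (\ref{pdbo_in_coords}).

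The key structural observation is that each $p_\ell \in S^{j+k-2m-\ell}(O)$ is rational in $\xi$, with denominator a power of $\xi_d^2 + \sfd(\bfx,\bfeta)$ and polynomial numerator. This inherits from Lemma \ref{phg_parametrix} applied to $E^{\Psi}$ (whose principal symbol is $(-1)^m(\xi_d^2+\sfd)^{-m}$); the $\odot$-composition and left/right multiplication by the polynomial symbols of $\Lambda_k$ and $\Lambda_j^{\t}$ only differentiate in $\xi$, which preserves the rational form, merely raising the denominator power and changing the numerator polynomial. I would Euclidean-divide the numerator by $(\xi_d^2+\sfd)^{\nu_\ell}$ in the variable $\xi_d$, writing $p_\ell = R_\ell(\bfx,\xi) + Q_\ell(\bfx,\bfeta,\xi_d)/(\xi_d^2+\sfd)^{\nu_\ell}$ with $R_\ell$ polynomial in $\xi_d$ and $\deg_{\xi_d} Q_\ell < 2\nu_\ell$.

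For the proper-rational piece, since (for $|\bfeta|\ge 1$) the poles $\xi_d = \pm i\sqrt{\sfd(\bfx,\bfeta)}$ sit strictly off the real axis by positive-definiteness of $\sfg^{-1}$, I would close the contour in the upper half plane (for $x_d>0$) or the lower (for $x_d<0$) and pick up the residues at $\pm i\sqrt{\sfd}$. The resulting $\tp_\ell^{\pm}(\bfy,\bfeta)$ is a rational function of $\bfeta$ and $\sqrt{\sfd(\bfy,0,\bfeta)}$, well-defined and smooth for $|\bfeta|\ge 1$. Scaling $\xi_d\mapsto t|\bfeta|$ together with the degree-$(j+k-2m-\ell)$ homogeneity of $p_\ell$ yields $\tp_\ell^{\pm}(\bfy,\lambda\bfeta) = \lambda^{j+k-2m-\ell+1}\tp_\ell^{\pm}(\bfy,\bfeta)$ for $\lambda,|\bfeta|\ge 1$, and the symbol estimates $|D_{\bfy}^{\beta}D_{\bfeta}^{\alpha}\tp_\ell^{\pm}|\le C_{\alpha,\beta,K}(1+|\bfeta|)^{j+k-2m-\ell+1-|\alpha|}$ then reduce by homogeneity to bounds on the compact set $\{|\bfeta|=1\}$, which are immediate from the smoothness of $\sfd$ and the fact that $\sqrt{\sfd}$ stays bounded away from zero there. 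Asymptotic summation via item 2 of Result \ref{results} produces a polyhomogeneous $\tp^{\pm}\sim \sum_{\ell}\tp_\ell^{\pm}$ of order $j+k-2m+1$.

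The main obstacle is the $\xi_d$-polynomial remainder $R_\ell$ when $n=j+k$ is large enough that $p_\ell$ fails to decay in $\xi_d$: the $\xi_d$-integral is then divergent in the classical sense and must be interpreted distributionally. This is exactly where the ``transmission property'' enters, and in our setting it is available for free because the rational structure inherited from the parametrix of $\Delta^m$ forces $R_\ell$ to be polynomial in $\xi_d$ (so its inverse Fourier transform in $\xi_d$ is a finite sum of derivatives of $\delta(x_d)$). The contribution of $R_\ell$ to $r(\bfy,x_d,\bfeta)$ therefore vanishes identically for $x_d\ne 0$, so the one-sided limits $x_d\to 0^{\pm}$ do not see it; any residual jump at $x_d=0$ is exactly the one recorded in Corollary \ref{jump_condition}, and is consistent with the $\pm$-discrepancy between $\widetilde{v_{k,j}^{+}}$ and $\widetilde{v_{k,j}^{-}}$. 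Only the proper-rational part contributes to $\tp^{\pm}$, and the argument closes.
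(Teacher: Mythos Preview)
Your approach is close in spirit to the paper's: both exploit the rational-in-$\xi$ structure of the expansion terms $p_\ell$ (inherited from Lemma \ref{phg_parametrix}) and use analytic continuation off the real $\xi_d$-axis to give meaning to the divergent normal-frequency integral. The paper first splits $p = p^{\sharp} + p^{\flat}$ with $p^{\sharp} = \sum_{\ell\le N} p_\ell$ a finite sum of rational terms and $p^{\flat}$ of order at most $-2$; it handles $p^{\flat}$ by absolute convergence (its Case 1), and for $p^{\sharp}$ it introduces a mollifier $\widehat\tau(\epsilon\xi_d)$ and deforms the $\xi_d$-contour to a fixed semicircle $\gamma_R$ in the upper (resp.\ lower) half-plane before letting $\epsilon\to 0$ and then $x_d\to 0^{\pm}$. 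Your route replaces the mollifier-plus-contour-deformation by Euclidean division in $\xi_d$ together with the observation that the polynomial-in-$\xi_d$ quotient contributes only derivatives of $\delta(x_d)$, hence nothing for $x_d\ne 0$. This is a cleaner way to isolate the transmission contribution and makes the jump discrepancy between $\widetilde{v_{k,j}^{+}}$ and $\widetilde{v_{k,j}^{-}}$ transparent.

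There is, however, one genuine gap in your write-up. Your opening Fubini step is applied to the \emph{full} symbol $p$, but when $j+k\ge 2m-1$ the $\xi_d$-integral $\int_{\reals} e^{ix_d\xi_d} p(\bfy,x_d,\bfeta,\xi_d)\,\dif\xi_d$ diverges, and $p$ itself is not rational---only its asymptotic terms $p_\ell$ are. You cannot Euclidean-divide $p$. You must first peel off a finite sum $\sum_{\ell\le N} p_\ell$ with $N$ large enough that the remainder $p - \sum_{\ell\le N} p_\ell$ has order $<-1$ in $\xi$ (hence absolutely $\xi_d$-integrable and treatable by dominated convergence), and only then apply your Euclidean-division/residue argument to the finitely many rational $p_\ell$. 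This is precisely the paper's $p^{\sharp}/p^{\flat}$ split. Without it, your asymptotic summation at the end has nothing to compare against, since $r(\bfy,x_d,\bfeta)$ was never rigorously defined from the full $p$. Once you insert this split, your argument goes through.
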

\begin{proof}
We split this into two cases.
   
{\bf Case 1: $j+k \le 2m -2$.}
In this case we have  
$$ 
(\Lambda_k E  \Lambda_j^{\t})^{\Psi} (g\cdot \delta_{\reals^{d-1}}) (\bfx)
     = (2\pi)^{-d}
 \int_{\reals^{d-1}} 
  \widehat{g}(\bfeta) 
     e^{i\bfy\cdot \bfeta}
\left( \int_{\reals}
  p(\bfx, \bfeta, \xi_d) 
    e^{ix_d\cdot \xi_d} \dif \xi_d 
    \right)\dif \eta    
     $$
(the inner integral is convergent by decay of $p$).

Allowing $x_d\to 0$,  we have
$\widetilde{v_{k,j}} g(\bfy) =  
(2\pi)^{-d} \int_{\reals^{d-1}} 
  \widehat{g}(\bfeta) 
     e^{i\bfy\cdot \bfeta}
\left( \int_{\reals}
  p(\bfy,0, \bfeta, \xi_d) 
 \dif \xi_d 
    \right)\dif \eta  $
    after exchanging limit and integral. 
The conditions on  
$p$  
ensure that
 $(\bfy,\bfeta) \mapsto 
 \frac{1}{2\pi} \int_{\reals}
  p(\bfy,0, \bfeta, \xi_d) 
 \dif \xi_d$
 is a symbol in 
 $S^{j+k-2m+1}(O')$. Indeed, for multi-indices $\alpha$ and $\beta$, we 
 have (by dominated convergence)
 \begin{eqnarray*}
 |D_{\bfy}^{\alpha}D_{\bfeta}^{\beta}\int_{\reals}
  p(\bfy,0, \bfeta, \xi_d) 
 \dif \xi_d| 
 &\le&
  \int_{\reals}
|( D_x^{\alpha}D_{\xi}^{\beta} p)(\bfy,0, \bfeta, \xi_d)|
 \dif \xi_d\\
 &\le & C   \int_{\reals}
(1+|\bfeta|+|\xi_d|)^{j+k-2m -|\beta|}
 \dif \xi_d \\
 &=& C (1+|\bfeta|)^{j+k-2m+1-|\beta|}
\end{eqnarray*}
where in the last equation, we have used the change of variable $t = \frac{\xi_d}{1+|\bfeta|}$.

A similar estimate applied to each $p_{\ell}$ guarantees that we can express $(v_{k,j})^{\Psi}$ as a  polyhomogeneous series, namely,
$$  \int_{\reals}
  p(\bfy,0, \bfeta, \xi_d) 
 \dif \xi_d 
 =
 \sum_{\ell=0}^{\infty}
\int_{\reals}
  p_{\ell}(\bfy,0, \bfeta, \xi_d) 
 \dif \xi_d.
      $$
 For $|\eta|>1$ and $\lambda>1$,
 $\int_{\reals}
  p_{\ell}(\bfy,0, \lambda \bfeta, \xi_d) 
 \dif \xi_d 
 = 
 \lambda^{ j+k-2m-\ell +1}
 \int_{\reals}
  p_{\ell}(\bfy,0,  \bfeta, \zeta) 
 \dif \zeta $ by a simple change of variable, so
 each term is positively homogeneous of order $j+k-2m-\ell +1$.

{\bf Case 2: $j+k> 2m-2$.}
In this case, we  write $p = \sum_{\ell=0}^Np_j + {p}^{\flat}$, choosing $j+k-2m -N-1\le 2$. 
This permits
us to treat ${p}^{\flat}$ as in Case 1; this is left to the reader.
We focus on $p^{\sharp} =  \sum_{\ell=0}^Np_j + {p}^{\flat}$.

Consider the case $x_d>0$, the other case is handled similarly.
Begin by mollifying $g\cdot \delta$ as follows:
 for a smooth $\tau:\reals\to \reals$ supported in $[-1,1]$, 
 consider $G_{\epsilon} (\bfx)= \frac{1}{\epsilon}g(\bfy) \tau(x_d/\epsilon)$. 
 Then
 $(\Lambda_k E  \Lambda_j^{\t})^{\Psi} (g\cdot \delta_{\reals^{d-1}}) 
 = 
 \lim_{\epsilon\to 0} (\Lambda_k E  \Lambda_j^{\t})^{\Psi} G_{\epsilon}$.

Because $\widehat{G_{\epsilon}}(\bfxi)=\widehat{g}(\bfeta) \widehat{\tau}(\epsilon \xi_d)$ is a Schwartz function,
 it follows that 
 \begin{equation}\label{transmission_fubini}
 \Op(p^{\sharp})(g\cdot \delta_{\reals^{d-1}})(\bfx)
 =
 (2\pi)^{-d}
 \int_{\reals^{d-1}}\widehat{g}(\eta)e^{i\langle\bfy,\bfeta\rangle}\int_{\reals} 
 \widehat{\tau}(\epsilon \xi_d)
 p^{\sharp}  
 (\bfx,\bfeta,\xi_d)
 e^{ix_d \xi_d}
 \dif \xi_d \dif \bfeta
\end{equation}
because $\bfxi\mapsto \widehat{G_{\epsilon}}(\bfxi) p^{\sharp}  (\bfx,\bfxi)$  
is integrable.

Note that $\widehat{\tau}$ is defined on $\comps$ and is entire. 
Because each $p_{\ell}$ is rational in $\xi$ (for $|\xi|>1$),
there is a complex region
$\Omega_{R_0} := \{\zeta\in \comps\mid |\zeta|>R_0, \Im(\zeta)>0\}$ where  
for each  $\ell = 0\dots N$,
$\zeta \mapsto p_{\ell}(\bfx,\bfeta,\zeta)$ 
is defined and analytic. 
The inner integral 
$\int_{\reals} \widehat{\tau}(\epsilon \xi_d)p^{\sharp} %p_{\ell}
(\bfx,\bfeta,\xi_d)e^{ix_d \xi_d}\dif \xi_d 
$  
in (\ref{transmission_fubini}) can be written as
$$
\int_{-R}^R \widehat{\tau}(\epsilon \xi_d) %p_{\ell}
p^{\sharp}
(\bfx,\bfeta,\xi_d)e^{ix_d \xi_d}\dif \xi_d -
 \int_{\gamma_R} \widehat{\tau}(\epsilon \zeta)
 p^{\sharp} %p_{\ell}
 (\bfx,\bfeta,\zeta)e^{ix_d \zeta}\dif \zeta
$$
for any $R_0<R<\infty$.
(Here $\gamma_R$ is the upper  part of semicircle of radius $R$ centered at $0$.) Not
Because $|e^{ix_d \zeta} \widehat{\tau}(\epsilon \zeta)| = | \int_{-1}^1 \tau(t) e^{i(x_d-\epsilon t)\zeta} \dif t|$, 
we have that 
$|e^{ix_d \zeta} \widehat{\tau}(\epsilon \zeta)| \le \|\tau\|_1$ 
provided $\epsilon<x_d$ and $\Im \zeta \ge 0$. 
By dominated convergence, we 
then have that 
 \begin{multline*}  
 \Op(p^{\sharp})(g\cdot \delta_{\reals^{d-1}})
  =\\
  (2\pi)^{-d}\int_{\reals^{d-1}}\widehat{g}(\eta)e^{i\langle\bfy,\bfeta\rangle}\left(\int_{-R}^R  p^{\sharp} %p_{\ell}
  (\bfx,\bfeta,\xi_d)e^{ix_d \xi_d}\dif \xi_d -
 \int_{\gamma_R} p^{\sharp} %p_{\ell}
 (\bfx,\bfeta,\zeta)e^{ix_d \zeta}\dif \zeta\right) \dif \bfeta
\end{multline*}
 Applying dominated convergence again as we let $x_d\to 0^+$, we have
\begin{multline*} 
\widetilde{v_{k,j}^{\pm}} g (\bfy) = \\
    (2\pi)^{-d} \int_{\reals^{d-1}}\widehat{g}(\eta)e^{i\langle\bfy,\bfeta\rangle}
   \left(
     \int_{-R}^R p^{\sharp} %p_{\ell}
       (\bfy,0,\bfeta,\xi_d)
     \dif \xi_d 
     -
     \int_{\gamma_R} p^{\sharp} %p_{\ell}
       (\bfy,0,\bfeta,\zeta)
      \dif \zeta\right) 
     \dif \bfeta.
 \end{multline*}
 The fact that the symbol is a  positively homogeneous symbol follows by
 considering for $\lambda \ge 1$ and each $\ell = 0 \dots N$, the integral
 $$
 \int_{-\lambda R}^{\lambda R} p_{\ell}(\bfy,0,\lambda \bfeta,\xi_d)\dif \xi_d -
 \int_{\gamma_{\lambda R}}p_{\ell}(\bfy,0,\lambda \bfeta,\zeta)\dif \zeta,
$$ and applying a change of variable as  in Case 1.
 \end{proof}

When $j+k=n\le 2m-2$, the principal symbol, in local coordinates, is (for $|\bfeta|\ge 1$)
the convergent integral
%%%%%%
\begin{equation*}
\sigma\left(   \widetilde{v_{k,j}}\right)(\bfy,\bfeta) =\frac{(-1)^{m-\frac{n}2} }{2\pi}
\begin{cases}
\int_{-\infty}^{\infty}\frac{1}{\left(\xi_d^2+
 \sfd(\bfy, \bfeta)
\right)^{m- \frac{n}{2}}}\dif \xi_d& j, k \text{  both even,}\\
\mbox{}&\mbox{}\\
 \int_{-\infty}^{\infty}
 \frac{\xi_d^2}{\left(\xi_d^2+
 \sfd(\bfy, \bfeta)
\right)^{m+1 - \frac{n}{2}}}
\dif \xi_d
& j, k \text{  both odd,}\\
\mbox{}&\mbox{}\\
\int_{-\infty}^{\infty}\frac{-i\xi_d}
{\left(\xi_d^2+\sfd(\bfy, \bfeta)\right)^{ m- \frac{n-1}{2}}}\dif \xi_d & n = j +k\text{ odd.} 
 \end{cases}
\end{equation*} 
After a change of variable, and integrating out the $\xi_d$ variable,
we are left with the simple expression 
\begin{equation}\label{princ_symb}
\sigma\left(    \widetilde{v}_{k,j}\right)(\bfy,\bfeta) = (-1)^{m-n} C_{k,j} \sfd(\bfy, \bfeta)^{\frac{n+1}{2} -m}.
\end{equation}
with constant
\begin{equation}\label{coeffs}
C_{k,j}  = 
\frac{1}{2\pi}
\begin{cases}
\int_{-\infty}^{\infty}\frac{1}{\left(\zeta^2+
1\right)^{m- \frac{n}{2}}}\dif \zeta =  2^{1+n-2m} \mathfrak{b}_{m-n/2-1} 
& j, k \text{ both even,}\\
\mbox{}&\mbox{}\\
 \int_{-\infty}^{\infty}\frac{\zeta^2}{\left(\zeta^2+1 \right)^{m+1 - \frac{n}{2}}}
\dif \zeta = 
 2^{1+n-2m}
\mathfrak{c}_{m-n/2-1}
& j, k \text{ both odd,}\\
\mbox{}&\mbox{}\\
\int_{-\infty}^{\infty}\frac{-i\zeta}
{\left(\zeta^2+1\right)^{ m- \frac{n-1}{2}}}\dif \zeta=0 & n = j +k
\text{ odd.} 
 \end{cases}
 \end{equation}
 where $\mathfrak{b}_j := \frac{(2j)!}{j!j!}$ 
 are middle binomial coefficients and 
 $\mathfrak{c}_{j-1} := 4\mathfrak{b}_{j-1} - \mathfrak{b}_j$ are double Catalan numbers
 (this follows from \cite[Proposition 4.1]{HL}).

%%%%%%%%%%%%%%%%%%%%%%%%%%%
\begin{proof}[Proof of Lemma \ref{smoothing_lemma}]
Let $(U_{\ell},\Phi_{\ell})_{\ell=1\dots N}$ be an atlas for $\partial \Omega$, with  $\Phi_{\ell}:U_{\ell} \to O_{\ell}\subset \reals^{d-1}$  a diffeomorphism
and $\Psi_{\ell} = \Phi_{\ell}^{-1}$.
Let $(\tau_\ell)_{\ell=1\dots N}$ be a smooth partition of unity for $\partial \Omega$ subordinate to $(U_\ell)_{\ell=1\dots N}$,
and let $(\tilde{\tau}_{\ell})_{\ell=1\dots N}$ be a family of  smooth cut-off functions  so that
$\tilde{\tau}:\partial \Omega \to [0,1]$ with $\supp{\tilde{\tau}_{\ell}}\subset U_\ell$ and $\tilde{\tau}_{\ell}(z) =1$ for $z\in \supp{\tau_{\ell}}$.

 Let $\sigma = s+2m-j-k-1$.
 We wish to show that for all $\ell=1\dots N$, 
 $$\| \Psi_{\ell}^*( \tau_{\ell} v_{k,j}f)\|_{H_p^{\sigma}(O_\ell)} 
 \le 
 C \sum_{\ell'=1}^N \|\Psi_{\ell'}^*(\tau_{\ell'} f)\|_{H_p^{\sigma +j+k+1-2m}(O_{\ell'})}
 $$
holds.

For $f\in H_p^s(\partial \Omega)$ we 
have $v_{k,j} f = \sum_{\ell'=1}^Nv_{k,j} \tau_{\ell'} f$,
and we consider $\tau_{\ell}v_{k,j}\tau_{\ell'} f$ for each value of 
$\ell$ and $\ell'$. We further split this using the functions $\tilde{\tau}_{\ell'}$, obtaining 
the maps $b_{\ell,\ell'} f:= \tilde{\tau}_{\ell'}\tau_{\ell}v_{j,k}\tau_{\ell'}f$ and
$g_{\ell,\ell'} f:=(1-\tilde{\tau}_{\ell'})\tau_{\ell}v_{k,j}\tau_{\ell'}f$. Clearly, 
$$\tau_{\ell}v_{k,j} f =  \sum_{\ell'=1}^N b_{\ell,\ell'}f + \sum_{\ell'=1}^N g_{\ell,\ell'}f.$$

Let $\mathcal{W}_{\ell,\ell'}: = \mathrm{supp}\bigl((1-\tilde{\tau}_{\ell})\times\tau_{\ell'}\bigr)$ and $\mathcal{U}_{\ell} := \supp{\tau_\ell}$.
Because $\mathcal{W}_{\ell,\ell'}$ and $\mathcal{U}_{\ell}$  
are disjoint, we have that the map
$\mathcal{D}'( \mathcal{U}_{\ell} ) \to \mathcal{D}'(\mathcal{W}_{\ell,\ell'}):F\mapsto \left(v_{k,j}F\right)|_{\mathcal{W}_{\ell,\ell'}}$ 
has a $C^{\infty}$ kernel. Namely, the restriction of  $(x,\alpha) \mapsto \lambda_{k,x} \lambda_{j,\alpha} \phi(x-\alpha)$
to $\mathcal{W}_{\ell,\ell'} \times \mathcal{U}_{\ell}$. 
Thus $\|\Psi_{\ell}^*g_{\ell,\ell'} f\|_{H_p^{\sigma_1}(\partial\Omega)} \le C\|\Psi_{\ell}^* (\tau_{\ell}f)\|_{H_p^{\sigma_2}(\partial\Omega)}$ 
for any pair $\sigma_1,\sigma_2\in \reals$.

The boundedness of $b_{\ell,\ell'}$ follows from the estimate
\begin{eqnarray*}
\| \Psi_{\ell}^*(\tilde{\tau}_{\ell'} \tau_{\ell}v_{k,j}  \tau_{\ell'}f)\|_{H_p^{\sigma}( O_{\ell})}  
&\le& C\| \Psi_{\ell'}^*(\tilde{\tau}_{\ell'} \tau_{\ell}v_{k,j} \tau_{\ell'}f) \|_{H_p^{\sigma}( O_{\ell'})}\\
&\le& C\|\widetilde{v_{k,j}^{\pm}} \Psi_{\ell'}^*( \tau_{\ell'}f) \|_{H_p^{\sigma}(O_{\ell'})}\\
&& + C\|\Psi_{\ell'}^*( \tau_{\ell'}f)\|_{H_p^{{\sigma}+j+k+1-2m}(O_{\ell'})}\\
&\le& C \|\Psi_{\ell'}^*( \tau_{\ell'}f)\|_{H_p^{{\sigma}+j+k+1-2m}(O_{\ell'})}.
\end{eqnarray*}
The first inequality follows because $\Psi_{\ell'}^{-1}\circ\Psi_{\ell}$  restricted to 
$\Psi_{\ell}^{-1}(U_\ell\cap U_{\ell'})$ is a diffeomorphism
from $\Psi_{\ell}^{-1}(U_\ell\cap U_{\ell'})$ to $\Psi_{\ell'}^{-1}(U_\ell\cap U_{\ell'})$.
The second follows from (\ref{pdbo_in_coords}) and the fact that 
$  { \Lambda_{k}}^{\Psi} K  {\Lambda_j^{\t}}^{\Psi}$  
is smoothing.
The last inequality follows because $\widetilde{v_{k,j}^{\pm}}$ is a pseudodifferential operator 
with symbol in $S^{j+k+1-2m}(O_{\ell'})$
and therefore is bounded from $H_{p}^{\sigma}(O_{\ell'})$ to $H_p^{\sigma+2m-j-k-1}(O_{\ell'})$
(this follows from the mapping properties of pseudodifferential operators \cite[Ch. 11, Theorem 2.1]{Tayl}).
\end{proof}

%%%%%%%%%%%%%%%%
%
%Section parametrix
%
%%%%%%%%%%%%%%%%
\subsection{Ellipticity of Matrix Symbols and a Parametrix}
\label{parametrix}
In this section we construct the global right parametrix $R$ for $L$. 
This is done in two stages -- first by generating a local parametrix in coordinates on $O'\subset \reals^{d-1}$
by way of Lemma \ref{local_parametrix_lemma}, and then by carefully piecing together a number of local parametrices with the aid of a partition of unity.

\subsubsection{A local parametrix}\label{local_parametrix}
We consider again a map $\Psi':O'\to U'$, with $O'\subset \reals^{d-1}$ and $U'\subset \partial \Omega$. 
Let $M_1 := \min_{\bfy\in O'} \lambda_1(\bfy)$ and $M_{d-1} := \max_{\bfy\in O'} \lambda_{d-1}(\bfy)$ the least 
 and greatest eigenvalues, respectively, of the  inverse  Gram matrix $\sfg^{-1}(\bfy)$ described in Section \ref{notation}.

The operator
\begin{equation}\label{local_matrix_operator}
\widetilde{\LL} : \begin{pmatrix}s_0\\ \vdots \\ s_{m-1}\end{pmatrix} \mapsto 
\begin{pmatrix}
\widetilde{v}_{00} &\dots& \widetilde{v}_{0, m-1}\\ 
\vdots& \ddots&\vdots\\ 
\widetilde{v}_{m-1,0}& \dots & \widetilde{v}_{m-1,m-1}
\end{pmatrix}
\begin{pmatrix}s_0\\ \vdots \\ s_{m-1}\end{pmatrix},
\end{equation}
is, modulo a smoothing operator,
 the local version of the operator $\LL$.
Its  $k,j$ entry (with
 indices running from $j,k=0\dots m-1$) is a pseudodifferential operator of order 
 $1+j+k -2m$.
 Such operators, having orders that are $m\times m$ Hankel
 matrices are so-called Douglis--Nirenberg elliptic systems, cf. \cite{DN}.

We consider matrix pseudodifferential operators $A$ with symbols having entries $a_{k,j} \in S^0$, 
since such operators have a simple notion of ellipticity. 
The principal symbol $\sigma(A)$ is nonsingular
for large $|\xi|$ if and only if the scalar symbol $\det(\sigma(A)$
is elliptic of order 0.
 Thus, it suffices to check that the determinant of the principal symbol is bounded from below as $|\xi|\to \infty$.

Operators with diagonal symbols comprise another class of operators with a simple notion of ellipticity.
In this case, writing $a(x,\xi) = \bigl(a_{kj}(x,\xi)\bigr)$, 
the off-diagonal entry $a_{kj}(x,\xi)$ (with $j\ne k$) is zero and each
diagonal entry $a_{jj}$ is elliptic.  Because such systems are decoupled, a parametrix of the same type exists -- namely
$b(x,\xi) = \bigl(b_{jk}(x,\xi)\bigr)$ with $b_{jj}$ the (scalar) parametrix of $a_{jj}$ and
$b_{jk} = 0$ when $j\ne k$.

Returning to the operator $\widetilde{L}$, we make the decomposition
$\widetilde{L }= \mathsf{A} \mathsf{L}  \mathsf{S}$, with properly supported pseudodifferential operators 
$\mathsf{A}$ and $\mathsf{S}$ that have diagonal symbols with elliptic entries 
and pseudodifferential operator $\mathsf{L}$ having a matrix symbol 
with entries in $S^0$ and which (as we soon shall see) is elliptic. Specifically,
we require
$$\sigma(\mathsf{A})(\bfy, \bfeta) = 
\begin{pmatrix}
\sfd(\bfy, \bfeta)^{(1-m)/2} &
\dots &0\\
 \vdots&\ddots&\vdots\\
0&\dots
& \sfd(\bfy, \bfeta)^{0}
\end{pmatrix},$$
and
$$
\sigma(\mathsf{S}) (\bfy, \bfeta)= 
\begin{pmatrix}
\sfd(\bfy, \bfeta)^{-m/2}  &\dots &0\\
\vdots &\ddots&\vdots\\
0&\dots& \sfd(\bfy, \bfeta)^{-1/2} 
\end{pmatrix}.
$$
Since $M_1|\bfeta|^2 \le \sfd(\bfy, \bfeta) \le  M_d |\bfeta|^2$, we see that each  diagonal entry is 
elliptic. That is, for $j=0\dots m-1$,  
the diagonal entry $\sigma(\mathsf{A})_{jj}$ is elliptic of order  $1+j-m$ and
 $\sigma(\mathsf{S})_{jj}$ is elliptic of order  $j-m$.
The parametrices for operators $\mathsf{A}$ and $\mathsf{S}$ are the decoupled operators $\mathsf{B}$ and $\mathsf{T}$, respectively. These have symbols
$$\sigma(\mathsf{B}) := 
\begin{pmatrix}
b_{0,0}(\bfy, \bfeta) &\dots &0\\
\vdots &\ddots&\vdots\\
0&\dots& b_{m-1,m-1}(\bfy, \bfeta)
\end{pmatrix}$$
and 
$$
\sigma(\mathsf{T}) := 
\begin{pmatrix}
t_{0,0}(\bfy, \bfeta) &\dots &0\\
\vdots &\ddots&\vdots\\
0&\dots& t_{m-1,m-1}(\bfy, \bfeta)
\end{pmatrix}.
$$
with $b_{jj}(y,\eta)$ the parametrix of $\sfd(\bfy, \bfeta)^{(1+j-m)/2}$ (for $j=0\dots m-1$)
and
similarly with  $t_{jj}(y,\eta) $ the parametrix of 
$\sfd(\bfy, \bfeta)^{(j-m)/2}$ (for $j=0\dots m-1$).

The operator $\mathsf{L}$ is simply defined to be the composition $   \mathsf{B} \widetilde{\LL} \mathsf{T}$, and its principal symbol
can be computed by taking the product $\sigma(\mathsf{B})  \sigma(\widetilde{\LL}) \sigma(\mathsf{T})$. 
Indeed, from
(\ref{princ_symb}) and (\ref{coeffs}) we have
$\sigma(\mathsf{L})_{j,k} \in S^0(O)$ when $j+k$ is even (otherwise it is in $S^{-1}$), and 
for $|\bfeta|\ge1$, we have
$$
\sigma(\mathsf{L})_{j,k}(\bfy, \bfeta) =2^{1+ j+k -2m } \begin{cases}    \mathfrak{b}_{m-(j+k)/2-1} &j,k \text{ both even,}\\
   \mathfrak{c}_{m-(j+k)/2-1}  & j,k \text{ both odd.}
 \end{cases}
$$
Since $\det \bigl(\sigma(\mathsf{L})\bigr) = 2^{m^2}$ by \cite[Proposition 5.2]{HL}
(note that the matrix $\sigma(\mathsf{L})_{j,k}$ differs from the matrix $\mathsf{M}_{k,j}$ of \cite{HL} by 
a change of sign in the odd columns), 
it follows that $\mathsf{L}$ is elliptic and has a parametrix $\mathsf{R}$.
A parametrix for $\widetilde{\LL}$ is then $\widetilde{R}= \mathsf{T}\mathsf{R}\mathsf{B}$, a 
pseudodifferential operator whose $j,k$ entry has order $2m - j-k-1$.

\subsubsection{A global parametrix}
We follow \cite[Theorem 8.6]{Grubb} in combining local parametrices of the various $\widetilde{\LL}$ to obtain a global parametrix $R$ for $L$.

Let $(U_{\ell},\Phi_{\ell})_{\ell=1\dots N}$ be an atlas for $\partial \Omega$, and write $\Psi_{\ell} = \Phi_{\ell}^{-1}:O_{\ell}\to U_{\ell}$.
Let $(\tau_\ell)_{\ell=1\dots N}$ be a smooth partition of unity for $\partial \Omega$ subordinate to $(U_\ell)_{\ell=1\dots N}$.
Consider two families of smooth cut-off functions $(\zeta_{\ell})_{\ell=1\dots N}$ and $(\theta_{\ell})_{\ell=1\dots N}$ so that
$\zeta_\ell:\partial \Omega \to [0,1]$ with $\zeta_\ell(z) =1$ for $z\in \supp{\tau_{\ell}}$ and $\supp{\zeta_{\ell}}\subset U_\ell$
and $\theta_{\ell}:\partial \Omega \to [0,1]$ with $\theta_\ell(z) =1$ for $z\in \supp{\zeta_{\ell}}$ and $\supp{\theta_{\ell}}\subset U_j$.

In each $O_\ell$, let $\widetilde{L}_{\ell}$ denote the operator given by (\ref{local_matrix_operator}).
The construction in Section \ref{local_parametrix} guarantees a right parametrix $\widetilde{R}_{\ell}$; 
for distributions supported in $U_{\ell}$, the change
of coordinates $(\widetilde{R}_{\ell})^{\Phi_{\ell}} = \Phi_{\ell}^*\widetilde{R}_{\ell}\Psi_{\ell}^* $ is well-defined. 
Define the global right parametrix $R$ as
$$
Rf(u) := \sum_{\ell=1}^N   \zeta_{\ell}(u)\bigl[( \widetilde{R}_{\ell} )^{\Phi_{\ell}}(\tau_\ell f)\bigr](u).
$$
We  have 
$LRf 
=
\sum_{\ell=1}^N   L\left(\zeta_{\ell}\bigl[( \widetilde{R}_{\ell} )^{\Phi_{\ell}}(\tau_\ell f)\bigr]\right)
\sim 
\sum_{\ell=1}^N \theta_{\ell} L \left( \zeta_{\ell}\bigl[( \widetilde{R}_{\ell} )^{\Phi_{\ell}}(\tau_\ell f)\bigr]\right)
$.
Note that $\widetilde{L}_{\ell}$ differs from $L^{\Psi_{\ell}}$ on $O_{\ell}$ by a smoothing operator.
A similar statement can be made on $\partial \Omega$: for each $\ell$,  
$\theta_{\ell}L\zeta_{\ell}\sim \theta_{\ell} (\widetilde{L}_{\ell})^{\Phi_{\ell}}\zeta_{\ell} $,
so 
\begin{eqnarray*}
LRf 
&\sim&
\sum_{\ell=1}^N \theta_{\ell}  (\widetilde{L}_{\ell})^{\Phi_{\ell}} \left( \zeta_{\ell}\bigl[( \widetilde{R}_{\ell} )^{\Phi_{\ell}}(\tau_\ell f)\bigr]\right)\\
&\sim&
\sum_{\ell=1}^N \theta_{\ell}  (\widetilde{L}_{\ell})^{\Phi_{\ell}} \left( \bigl[( \widetilde{R}_{\ell} )^{\Phi_{\ell}}(\tau_\ell f)\bigr]\right)\\
&\sim &
\sum_{\ell=1}^N \theta_{\ell} \tau_{\ell} f=f
\end{eqnarray*}
In the second equivalence (modulo a smoothing operator), we have made use of item 1 of Result \ref{results} 
and the fact that $\supp{\tau_{\ell}}$ is contained in the zero set of ${1-\zeta_{\ell}}$.
\begin{proof}[Proof of Lemma \ref{closed}]
Because there exists a right parametrix for 
$\LL$, we have
$$\LL R_{\left|_{Y_{p,s+2m-1}}\right.} = \mathrm{Id}_{Y_{p,s+2m-1}} +K,$$ 
with $K$ a compact operator, it follows that $\ran \left(\LL R_{p,s}\right)$ is finitely complemented.
Since 
$$\LL R(Y_{p,s+2m-1})\subset \LL(X_{p,s}), $$ 
it follows that the range of 
 $\LL$ is also finitely complemented, hence closed in $Y_{p,s+2m-1}$. 
 Finally, because $\ran P$ is finite dimensional, $\ran \left(\LL\right )+ \ran P$ is closed in $Y_{p,s+2m-1}$
 and $\LLs(X_{p,s}^{\sharp})$ is closed in $Y_{p,s+2m-1}^{\sharp}$.
\end{proof}
%%%%%%%%%%%%%%%%%
%
%Subsection: Uniqueness
%
%%%%%%%%%%%%%%%%%
\subsection{Uniqueness}
\label{Uniqueness}
We begin by providing a uniqueness result for boundary layer potential solutions to (\ref{Dirichlet}) in the bounded domain $\Omega$,
and a partial result for $\reals^d\setminus \overline{\Omega}$.
Let us introduce the bilinear form $\cB$ via
$$\cB(w,v) = \begin{cases}
 \int_{\Omega} \langle \nabla \Delta^{(m-1)/2} w, \nabla \Delta^{(m-1)/2} v\rangle \dif x& m\text{ is odd},\\
  \int_{\Omega}  \Delta^{m/2} w(x) \Delta^{m/2} v(x) \dif x& m\text{ is even.}\end{cases}$$

The result for $\Omega$ we include for completeness; 
the fact that the solution has the specific form (\ref{BhRep}) is not important in this case. 
The result would follow easily from the classical uniqueness results in \cite{ADN,LionsMagenes}.
\begin{lemma}\label{inner_uniqueness}
Suppose that $u\in C^{2m}(\Omega)\cap C^{m-1}(\overline{\Omega})$ is a classical solution of (\ref{Dirichlet}) 
in $\Omega$ with homogeneous Dirichlet values (i.e., $h_k =0$ for all $k=0\dots m-1$).  
Then $u=0$ in $\Omega$. 
\end{lemma}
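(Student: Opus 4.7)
The plan is an energy argument combined with induction on $m$. The starting point is a one-sided Green identity: integrating $u\cdot\Delta^m u =0$ over $\Omega$ and iteratively moving the $m$ Laplacians from $\Delta^m u$ back onto $u$ via the divergence theorem, I would obtain an identity of the form
\begin{equation*}
0 \;=\; \int_\Omega u\,\Delta^m u\,\dif x \;=\; \pm\,\cB(u,u) \;+\; \sum_{k=0}^{m-1} (-1)^{k}\int_{\partial\Omega}\lambda_k u\cdot \lambda_{2m-1-k} u \,\dif\sigma.
\end{equation*}
The key structural feature, visible by induction on the number of integration-by-parts steps, is that each boundary term pairs a \emph{low}-order trace $\lambda_k u$ (with $0\le k\le m-1$) with a high-order trace $\lambda_{2m-1-k}u$. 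Under the hypothesis $\lambda_k u\equiv 0$ for $k=0,\dots,m-1$, every boundary integral vanishes and I conclude $\cB(u,u)=0$.

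By definition of $\cB$, this forces $\Delta^{m/2}u\equiv 0$ in $\Omega$ when $m$ is even, and $\nabla\Delta^{(m-1)/2}u\equiv 0$ in $\Omega$ when $m$ is odd. In the odd case $\Delta^{(m-1)/2}u$ is constant; since $m-1$ is even, the hypothesis $\lambda_{m-1}u=\tr\Delta^{(m-1)/2}u=0$ on $\partial\Omega$ forces that constant to vanish. In both parities, $u$ now solves $\Delta^{\lfloor m/2\rfloor}u=0$ in $\Omega$ and inherits the zero Dirichlet traces $\lambda_k u=0$ for $k=0,\dots,\lfloor m/2\rfloor -1$. An induction on $m$ — with base case $m=1$ being classical uniqueness for the Dirichlet problem for $\Delta$ — then delivers $u\equiv 0$.

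The expected obstacle is not the energy identity itself (routine but tedious bookkeeping of signs) but the regularity gap: the stated hypothesis $u\in C^{2m}(\Omega)\cap C^{m-1}(\overline{\Omega})$ does \emph{not} a priori give meaning to the traces $\lambda_{2m-1-k}u$ appearing in the boundary integrals. I would address this by first upgrading regularity — since $\Delta^m$ is an elliptic operator with constant coefficients, $\partial\Omega$ is $C^\infty$, and the boundary data is zero (hence smooth), classical boundary regularity for polyharmonic Dirichlet problems gives $u\in C^\infty(\overline{\Omega})$, which legitimizes all the integrations by parts. An alternative robust approach is to apply the identity on interior exhausting subdomains $\Omega_\epsilon=\{x\in\Omega:\dist(x,\partial\Omega)>\epsilon\}$ and pass to the limit, using non-negativity of the $\cB$-integrand to control the bulk term and $C^{m-1}(\overline\Omega)$ to control the boundary contribution. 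As the author notes, the conclusion can also be obtained directly by invoking the classical uniqueness results for elliptic Dirichlet systems in \cite{ADN} or \cite{LionsMagenes}, which is the quickest route if one wishes to avoid the energy computation altogether.
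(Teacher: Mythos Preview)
Your proposal is correct and follows essentially the same energy argument as the paper: apply Green's first identity with $w=v=u$ so that the boundary terms (each pairing a low-order trace $\lambda_j u$ with a high-order one) vanish, conclude $\cB(u,u)=0$, use the parity analysis to drop to $\Delta^{\lfloor m/2\rfloor}u=0$, and iterate. In fact you are more careful than the paper on one point: the paper applies Green's first identity directly under the hypothesis $u\in C^{2m}(\Omega)\cap C^{m-1}(\overline{\Omega})$ without commenting on the meaning of the high-order traces $\lambda_{2m-1-j}u$, whereas you flag this regularity gap and propose the standard remedies (elliptic boundary regularity or an interior-exhaustion argument).
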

\begin{proof}
Recall Green's first identity 
$$\int_{\Omega} v(x)\Delta^m w(x) \dif x - \cB(w,v)  
= 
\sum_{j=0}^{m-1} (-1)^j \int_{\partial \Omega} \lambda_j v(x) \lambda _{2m-j-1} w(x)\, \dif \sigma(x).$$
Apply this to $w = u$ and $v = u$ and observe that $\cB(u,u) = 0$.

When $m$ is even, we see that  $\Delta^{m/2} u$ vanishes a.e. in $\Omega$.  
When $m$ is odd, $\Delta^{(m-1)/2}u$ must be a constant a.e. in $\Omega$, but since it satisfies
 $\lambda_{m-1}u = 0$,  it vanishes. In either case we have that $u$ satisfies the corresponding
polyharmonic Dirichlet problem of  order $\lfloor m/2\rfloor$.
Repeating this argument a maximum of $\log_2 m$ times, we arrive at $|\nabla u(x)| = 0$
throughout $\Omega$ and $ u|_{\partial \Omega} =0$.
\end{proof}

The corresponding problem for the exterior is more difficult. 
In general, uniqueness does not hold. 
E.g., both $u_1(x,y)= 1+\ln\bigl( x^2+y^2 \bigr)$ and $u_2(x,y)= x^2+y^2$  satisfy $\Delta^2 u =0$ in the complement of the
unit ball and they have the same trace and normal derivative on the unit circle. 
To treat this, we make use of a {\em radiating condition}, which guarantees uniqueness for functions having controlled growth.
In other words, under some additional assumptions of behavior of the function at infinity, 
the solution we propose will be in a unicity class for the unbounded domain
-- cf. \cite{CD-Radiating}. 

For the purposes of this article, we make them as follows:
for sufficiently large $R$ (relative to $\Omega$), 
consider  on $B(0,R)$ the boundary operators  $\lambda_j$ given in (\ref{bdry_operator_def}). 
The function $u$ satisfies the radiating conditions if  there exists $C$ so that
\begin{equation}\label{radiating}
|\lambda_{j}   u(x) |  \le C \begin{cases}R^{m-1-j},&\text{ for }j=0\dots m-1,\\
R^{m-d-j} ,& \text{ for }j=m\dots 2m-1.\end{cases}\end{equation}

\begin{lemma}\label{outer_uniqueness}
Suppose that $u\in C^{2m}(\reals^d\setminus\overline{\Omega})\cap C^{m-1}(\reals^d\setminus{\Omega})$ is a classical solution of (\ref{Dirichlet}) in 
$\reals^d\setminus\overline{\Omega}$ with homogeneous Dirichlet values.
 If $u$ satisfies (\ref{radiating}) as $R\to \infty$, then
 $\Delta^{m/2}u=0$ in $\reals^d\setminus\overline{\Omega}$, if $m$ is even, and
$\nabla \Delta^{(m-1)/2}u =0$  in $\reals^d\setminus\overline{\Omega}$, if $m$ is odd.
\end{lemma}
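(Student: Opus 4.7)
The plan is to mirror the proof of Lemma \ref{inner_uniqueness}, but applied to the bounded annular region $\Omega_R := B(0,R)\setminus\overline{\Omega}$ (for $R$ large enough that $\overline{\Omega}\subset B(0,R)$), and then to send $R\to\infty$. Applying Green's first identity on $\Omega_R$ with $v=w=u$, and using $\Delta^m u=0$, we get
$$-\cB_{\Omega_R}(u,u) = \sum_{j=0}^{m-1}(-1)^j \int_{\partial\Omega_R}\lambda_j u \cdot \lambda_{2m-j-1}u \, \dif\sigma,$$
where $\cB_{\Omega_R}$ denotes $\cB$ with $\Omega$ replaced by $\Omega_R$. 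The boundary $\partial\Omega_R$ splits as $\partial\Omega\sqcup\partial B(0,R)$, and the hypothesis $\lambda_k u|_{\partial\Omega}=0$ for $k=0,\dots,m-1$ kills every term on $\partial\Omega$ (regardless of the orientation convention one uses there, since one factor of each product vanishes).

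Thus only the integrals over $\partial B(0,R)$ survive, and the plan is to show they tend to $0$ as $R\to\infty$ by invoking the radiating conditions (\ref{radiating}). For $j\in\{0,\dots,m-1\}$, the first bound in (\ref{radiating}) gives $|\lambda_j u|\le C R^{m-1-j}$ on $\partial B(0,R)$, while $2m-j-1\in\{m,\dots,2m-1\}$, so the second bound gives $|\lambda_{2m-j-1}u|\le C R^{m-d-(2m-j-1)}=CR^{j+1-m-d}$. The pointwise product is therefore $O(R^{-d})$, and integrating against the surface measure on $\partial B(0,R)$ (which is $O(R^{d-1})$) yields
$$\left|\int_{\partial B(0,R)}\lambda_j u \cdot \lambda_{2m-j-1}u \, \dif\sigma\right| \le C R^{-1} \longrightarrow 0.$$

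Combining the displays, $\cB_{\Omega_R}(u,u)\to 0$ as $R\to\infty$. Since $\cB_{\Omega_R}(u,u)\ge 0$ and is monotone increasing in $R$ (the integrand is a pointwise nonnegative quantity and the domain is expanding), each term in the sequence must already vanish. Hence $\Delta^{m/2}u=0$ a.e.\ on every $\Omega_R$ when $m$ is even, and $\nabla\Delta^{(m-1)/2}u=0$ a.e.\ on every $\Omega_R$ when $m$ is odd; in either case smoothness of $u$ upgrades this to the pointwise statement on all of $\reals^d\setminus\overline{\Omega}$.

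The only delicate point is bookkeeping the contributions on $\partial\Omega$ as a piece of $\partial\Omega_R$ (where the outward unit normal of $\Omega_R$ points into $\Omega$, opposite to $\vec{n}$), but since the homogeneous Dirichlet data makes those terms drop out identically, this sign issue is immaterial. The rest is the exponent arithmetic above, which is arranged by the precise decay rates in (\ref{radiating}) to produce the critical $R^{-1}$ decay.
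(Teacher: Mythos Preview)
Your proof is correct and follows essentially the same route as the paper: apply Green's first identity on the annulus $B(0,R)\setminus\overline{\Omega}$ with $v=w=u$, drop the $\partial\Omega$ terms via the homogeneous Dirichlet data, and use the radiating conditions (\ref{radiating}) to show the $\partial B(0,R)$ contribution is $O(R^{-1})$. Your explicit monotonicity-in-$R$ argument for concluding $\cB_{\Omega_R}(u,u)=0$ is a nice touch that the paper leaves implicit.
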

  \begin{proof}
 Considering Green's first identity in the set $\Upsilon_R = B(0,R)\setminus \overline{\Omega}$ 
 (for sufficiently 
large $R$) 
we have 
\begin{eqnarray*}
\int_{\Upsilon_R} u(x)\Delta^m u(x) \dif x - \cB(u,u)  
&=& 
\sum_{j=0}^{\lfloor m/2\rfloor} (-1)^j \int_{\partial \Upsilon_R} \lambda_ju(x) \lambda _{2m-j-1} u(x)\, \dif \sigma(x)
\end{eqnarray*}
By the homogenous Dirichlet conditions, the boundary integrals over $\partial\Omega$ vanish. 
This leaves
\begin{multline*}\sum_{j=0}^{m-1} (-1)^j \int_{\{|x|=R\}} \lambda_j 
u(x)
 \lambda _{2m-j-1} u(x)\, \dif \sigma(x)\\
\le
C
\sum_{j=0}^{m-1} \int_{\{|x|=R\}} R^{m-1-j}R^{1+j-d-m}   \dif \sigma(x) \\
\le
C
R^{d-1} R^{-d}    
\xrightarrow{R\to \infty} 0.
\end{multline*}
From this it follows that $\cB(u,u)=0$.
  \end{proof}

The radiating condition follows from the fact that the functions $\bfg$ are forced to lie in the kernel of $P^{\t}$.\footnote{This
should be a familiar phenomenon for practitioners of RBF interpolation: for scattered data fitting with conditionally positive definite functions,
 the interpolation matrix $(\phi(\xi-\zeta))$  is augmented by various polynomial side conditions 
 (and simultaneously, the addition of a polynomial, to keep the system square). 
 This has the dual effect of ensuring the interpolant lies in a {\em Native space} (a reproducing kernel semi-Hilbert space) and
 that the augmented system is $1-1$.}

\begin{proof}[Proof of Lemma \ref{oneone}]
Fix $p,s\in \reals$ and consider a solution $v=(A,\bfg) \in X_{p,s}^{\sharp}$ to the 
homogeneous system $\LLs v = 0$.
This implies that $\LL \bfg \in \Pi_{m-1}$, and
the ellipticity of $\LL$ -- in particular, the fact
that $\mathrm{sing}\supp\bfg \subset\mathrm{sing}\supp{ \LL \bfg} = \emptyset$ --  guarantees
that the entries $g_j$ of $\bfg$ are in $C^{\infty}(\partial \Omega)$. Because of this, and 
Corollary \ref{boundary_smoothness},
we can extend each
$V_j g_j$  to $C^{\infty}(\overline{\Omega})$ as well as $C^{\infty}(\reals^d\setminus \Omega)$, 
hence  the same holds for boundary layer potential 
$ u =  
\sum V_j g_j  + \sum A_j p_j$. 
 Thus $u$ satisfies (\ref{Dirichlet}) with homogeneous Dirichlet boundary conditions
 in both components of $\reals^d\setminus \partial\Omega$.

By Lemma \ref{inner_uniqueness}, $u=0$ in $\Omega$.

 To handle $u$ in the exterior of $\Omega$,  write $u = \phi* \mu_{\bfg} +p$, with
$\mu_{\bfg} = \sum_{j=0}^{m-1} \Lambda_j^{\t} (g_j\cdot\delta_{\partial \Omega})$.
Applying the moment conditions $P^{\t} \bfg = 0$, we see that
 $\mu_{\bfg} 
 \perp \Pi_{m-1}$. 
Thus, for any $q\in \Pi_{m-1}$, we have that 
$
\phi *\mu_{\bfg} =  (\phi - q) *\mu_{\bfg}$.
For $x$ sufficiently far from
$\partial \Omega$, let $Q_x$ be the Taylor polynomial of degree $m-1$ to $\alpha \mapsto \phi(x-\alpha)$ 
centered at the origin (or any other point suitably close to $\partial \Omega$). Then 
$$
(\phi - Q_x) *\mu_{\bfg}  = 
\sum_{j=0}^{m-1} 
\int_{\partial \Omega} \lambda_{j,\alpha} \left[\phi(x-\alpha) - Q_x(\alpha)\right] g_j(\alpha) 
\dif \alpha.
$$
Since $\lambda_j Q_x$ is the degree $m-1-j$ Taylor polynomial to $\alpha\mapsto \lambda_{j,\alpha}\phi(x-\alpha)$,
we have that 
$$| \lambda_{j,\alpha} \left[\phi(x-\alpha) - Q_x(\alpha)\right]| \le 
C
(\mathrm{diam}(\Omega))^{m-j}
\sup_{|\alpha|\le m} \max_{\alpha\in\Omega} |D^{\alpha}\phi(x -\alpha)|$$
From the remainder formula in Taylor's theorem and estimates on the derivative of the fundamental solution
(\ref{FS_derivative_estimates}), we have that
$
\phi *\mu_{\bfg} 
= \mathcal{O}(|x|^{m-d}|\log(x)|)$ as $|x|\to \infty$. %$
Repeating this for derivatives of 
$ 
\phi *\mu_{\bfg} $, we observe that for $|\alpha|+d\le m$, we have
$
|D^{\alpha}   (\phi *\mu_{\bfg})(x) | 
 = \mathcal{O}(|x|^{m-d-|\alpha|}|\log x|)$
while for $|\alpha|+d>m$, we have
$
|D^{\alpha}(\phi *\mu_{\bfg})(x) |
= \mathcal{O}(|x|^{m-d-|\alpha|}).$
Because $D^{\alpha} p (x)= \mathcal{O}(|x|^{m-1-|\alpha|})$ for $\alpha \le m-1$,
the radiating conditions (\ref{radiating}) are satisfied by $u$, and Lemma \ref{outer_uniqueness} applies.

Because $\cB(u,u) = 0$ in $ \reals^d\setminus \partial \Omega$, 
we have $\Lambda_{m} u(x)=0$ for $x\in \reals^d\setminus \partial \Omega$. 
On the other hand, 
$$\Lambda_{m} u = \Lambda_{m} (\sum_{j=0}^{m-1} V_j g_j +p)
= \Lambda_{m}V_{m-1}g_{m-1}+  \sum_{j=0}^{m-1} \Lambda_{m}V_j g_j.
$$
By Corollary  \ref{jump_condition}, the sum $ \sum_{j=0}^{m-1} \Lambda_{m}V_j g_j$ is continuous throughout $\reals^d$,
while for $x_0\in \partial \Omega$,
\begin{eqnarray*}
\lim_{\substack{x\to x_0\\x\in \Omega}} \Lambda_{m}V_{m-1}g_{m-1}(x) -
\lim_{\substack{x\to x_0\\x\in \reals^d\setminus\overline{\Omega}}} \Lambda_{m}V_{m-1}g_{m-1}(x) 
= g_{m-1}(x_0)
\end{eqnarray*}
Therefore, 
$$\lim_{\substack{x\to x_0\\x\in \Omega}}\Lambda_{m} u(x) - \lim_{\substack{x\to x_0\\x\in\reals^d\setminus\overline{\Omega} }}  
\Lambda_{m} u(x) = g_{m-1}(x_0)  = 0.$$
The remaining auxiliary functions $g_{m-2},g_{m-3},\dots g_0$ 
can be treated 
using the same argument, with the operators $\Lambda_{m+1},\Lambda_{m+2},\dots,\Lambda_{2m-1}$.
Finally, it follows obviously that $p = u  \in \Pi_{m-1}$ vanishes.
\end{proof}
%%%%%%%%%%%%%%%%%%%%%%%%%%%%%%%%%%%%%%%%%%%%%%%%%%%
%%%                																	%%%
%%%                																	%%%
%%%                																	%%%
%%%                																	%%%
%%%%%%%%%%%%%%%%%%%%%%%%%%%%%%%%%%%%%%%%%%%%%%%%%%%
\section{Proofs of the main results}\label{proofs}
%%%%%%%%%%%%%%%%%%%%%%%%%
%
%
%
%
%%%%%%%%%%%%%%%%%%%%%%%%%
\begin{proof}[Proof of Theorem \ref{polyharmonic_dirichlet_solution}]
For $1<p<\infty$, the inverse of $L_{p,s}$ ensured by Proposition \ref{bounded_inverse} gives the 
auxiliary functions which are the solutions of the boundary potential solution of the Dirichlet problem.

Extend $\bfh \in Y_{p,s}$ to get $(0,\dots,0, h_0,\dots h_{m-1})^{\t} \in Y_{p,s}^{\sharp}$.
The solution is given by 
$$(A_1,\dots A_N, g_0,\dots,g_{m-1})^{\t} = L_{p,s}^{-1}(0,\dots,0, h_0,\dots h_{m-1})^{\t},$$
and one readily observes that $\bfg \in X_{p,s+1-2m}$
and
$\|\bfg\|_{X_{p,s+1-2m}}\le \|L_{p,s}^{-1}\| \|\bfh \|_{Y_{p,s}}$.
As desired,
 $$\|g_j\|_{W_p^{s+j+1-2m}(\partial \Omega)} \le \|L_{p,s}^{-1}\| 
\max\left(\|h_0\|_{H_p^s(\partial \Omega)}, \dots, \|h_{m-1}\|_{H_p^{s-(m-1)}(\partial \Omega)}\right)$$
 for each $j=0\dots m-1$.

\end{proof}
%%%%%%%%%%%%%%%%%%%%%%%%%
%
%%%%%%%%%%%%%%%%%%%%%%%%%
\begin{proof}[Proof of Corollary \ref{Dirichlet_corollary}]
Since $\bfh \in Y_{2,m-1/2}$,  $(0, \bfh)^{\t} \in Y_{2,m-1/2}^{\sharp}$,
and $(\vec{A},\bfg) = L^{-1}(0,\bfh)$ is in $X_{2,-m +1/2}^{\sharp}$. 
Consider the putative solution 
$$u=p+\sum_{j=0}^{m-1} V_j g_j= p+\phi*[(\Lambda_j)^{\t} (g_j \cdot \delta_{\partial \Omega})].$$
Note that for $j=0\dots m-1$, the functional $g_j \cdot \delta_{\partial \Omega}$ is in the dual of $H_2^{m-j}(\reals^d)$: 
 for any $f\in H_2^{m-j}( \reals^d)$, the trace theorem guarantees $\mathrm{Tr}f\in H_2^{m-j-1/2}(\partial \Omega)$. 
By definition,
$|\langle g_j \cdot \delta_{\partial \Omega}, f\rangle| = |\langle g_j, \mathrm{Tr}f\rangle |$, and thus
\begin{eqnarray*}
|\langle g_j \cdot \delta_{\partial \Omega}, f\rangle|
&\le& \|g_j\|_{H_2^{j-m+1/2}(\partial \Omega)} \| \mathrm{Tr}f\|_{H_2^{m-j-1/2}(\partial \Omega)}\\
&\le&
C \|g_j\|_{H_2^{j-m+1/2}(\partial \Omega)} \| f\|_{H_2^{m-j}( \reals^d)}.
\end{eqnarray*}
Thus,  $g_j \cdot \delta_{\partial \Omega} \in H_2^{-m +j}(\reals^d)$
and $\|g_j  \cdot \delta_{\partial \Omega} \|_{H_2^{-m +j}(\reals^d)} \le C \|g_j\|_{H_2^{-m + j+1/2}(\partial \Omega)}$
for each $j$. 

It follows that $(\Lambda_j)^{\t} (g_j \cdot \delta_{\partial \Omega})\in H_2^{-m}(\reals^d)$, 
and that (because $f\mapsto f*\phi$ is, up to a smoothing operator, a pseudodifferential operator of order $-2m$)
$\sum_{j=0}^{m-1}\phi*[(\Lambda_j)^{\t} (g_j \cdot \delta_{\partial \Omega})]\in H_{2,loc}^m(\reals^d)$. 
In short, we have 
$$\|u\|_{H_{2}^m(\Omega)}\le 
 C \max_{j=0\dots m-1} \|g_j \|_{H_2^{j-m+1/2}(\reals^d)}\le C \max_{k=0\dots m-1} \|h_k \|_{H_2^{m-1/2-k}(\reals^d)}.$$

The fact that $\Delta^m u  = 0 $ in $\Omega$ is  clear from the construction.
That the Dirichlet conditions are satisfied follows from a limiting argument: 
by Theorem \ref{polyharmonic_dirichlet_solution} the conditions hold for $\bfh \in (C^{\infty})^m$;
this extends to $\bfh \in Y_{2,m-1/2}$ by density of $(C^{\infty})^m$ and the
continuity of the map  $\bfh \mapsto u$ given by $\|u\|_{H_2^m(\Omega)} \le C \|(\vec{A},\bfg)\|_{X_{2,-m+1/2}^{\sharp}}\le C \|\bfh\|_{Y_{2,m-1/2}}$.
\end{proof}
%%%%%%%%%%%%%%%%%%%%%%%%%
%
%%%%%%%%%%%%%%%%%%%%%%%%%
\begin{proof}[Proof of Theorem \ref{Main_Rep_Theorem}]
We begin by considering $f\in C^{\infty}(\overline{\Omega})$. 
In this case, Theorem \ref{polyharmonic_dirichlet_solution}
ensures that there are $C^{\infty}$ functions $g_j$, $j=0,\dots,m-1$ 
and a polynomial $p\in\Pi_{m-1}$
so that $f_1= \sum_{j=0}^{m-1} V_j g_j +p$
solves the polyharmonic Dirichlet problem (\ref{Dirichlet})
with boundary data $h_k= \lambda_k f \in C^{\infty}(\partial \Omega)$, for $k=0\dots m-1$.

By Proposition \ref{boundary_smoothness}, the remainder  $f_2 := f-f_1$ is in $C^{\infty}(\overline{\Omega})$ and
Green's representation (\ref{GreensRep}) gives
\begin{eqnarray*}
f_2(x) 
&=&
\int_{\Omega} \Delta^m f(\alpha) \phi(x-\alpha) \dif\alpha\\
&&+ \sum_{j=0}^{m-1} (-1)^{j+1} \int_{\partial \Omega} (\lambda_{2m-j-1} (f-f_1))(\alpha) \; \lambda_{j,\alpha}\phi(x-\alpha)\, \dif \sigma(\alpha).
\end{eqnarray*}
The representation of $f$ follows, and we note that 
\begin{equation}\label{enn_jay}
N_j f =  g_j + (-1)^{j+1}(\lambda_{2m-j-1} f - \lambda_{2m-j-1}f_1)
\end{equation}
holds.

For every $s\ge 0$ 
there is $C<\infty$
so that
$\|\lambda_k f \|_{H_p^s(\partial \Omega)}\le C \|f\|_{B_{p,1}^{s+k+1/p}(\Omega)}$ 
by the trace theorem, specifically the fact that
 $\mathrm{Tr}: B_{p,1}^{1/p}(\Omega)\to L_p(\partial \Omega)$ is bounded (this is  \cite[Section 4.4.3]{Trieb}). 
In particular
\begin{equation}\label{first_p_estimate}
\|\lambda_{2m-j-1} f \|_{H_p^s(\partial \Omega)} \le C\|f\|_{B_{p,1}^{s+2m-j-1+1/p}(\Omega)}
\end{equation}
holds for all $s\ge 0$ and all integers $0\le j\le 2m-1$.
It follows from Theorem \ref{polyharmonic_dirichlet_solution} and (\ref{first_p_estimate}), 
that for $k = 0, \dots, m-1$,
\begin{equation}\label{second_p_estimate}
\|g_k\|_{H_p^s(\partial \Omega)}
\le 
C\max_{j=0\dots m-1} \|\lambda_{j} f\|_{H_p^{s+(2m-j-k-1)}(\partial \Omega)}\le
 C \|f\|_{B_{p,1}^{s+2m-k-1+1/p}(\Omega)}.
\end{equation}
It remains to consider $\lambda_{2m-j-1}f_1 = \lambda_{2m-j-1}(\sum_{k=0}^{m-1} V_k g_k +p)$. 
Employing the boundary operators, this  
simplifies to $\sum_{k=0}^{m-1} v_{2m-j-1,k}^- g_k$,
since $p\in \Pi_{m-1}$ and $j\le m-1$.
It follows from Lemma \ref{smoothing_lemma} that
\begin{equation*}
\|\lambda_{2m-j-1}f_1 \|_{H_p^s(\partial \Omega)} 
\le
\sum_{k=0}^{m-1} \|v_{2m-j-1,k}^- g_k\|_{H_p^s(\partial\Omega)}\\
 \le
 \sum_{k=0}^{m-1}\|g_k\|_{H_p^{(s+k-j)}(\partial\Omega)}
 \end{equation*}
holds.
We  use (\ref{second_p_estimate}),  
namely $\|g_k\|_{H_p^{(s+k-j)}(\partial\Omega)}\le C  \|f\|_{B_{p,1}^{s+2m-j-1+1/p}(\Omega)}$,
to establish
\begin{equation}\label{third_p_estimate}
\|\lambda_{2m-j-1}f_1 \|_{H_p^s(\partial \Omega)} 
\le C\,\|f\|_{B_{p,1}^{s+2m-j-1+1/p}(\Omega)}.
\end{equation}
Applying the triangle inequality to (\ref{enn_jay}) gives
\begin{eqnarray*}
\|N_j f\|_{H_p^{s}(\partial \Omega)}
&\le &
 \|g_j\|_{H_p^{s}(\partial \Omega)} + \|\lambda_{2m-j-1} f\|_{H_p^{s}(\partial \Omega)} +\| \lambda_{2m-j-1}f_1\|_{H_p^{s}(\partial \Omega)}\\
&\le& C\|f\|_{B_{p,1}^{s+2m-j-1}(\Omega)},
\end{eqnarray*}
where the final inequality follows from the three estimates  (\ref{second_p_estimate}), (\ref{first_p_estimate}) and (\ref{third_p_estimate}). 

For a general $f\in W_p^{2m}(\Omega)$, the representation (\ref{Main_Rep}) holds by density of $C^{\infty}(\overline{\Omega})$ 
and continuity of the operators $\Delta^{m}$ and $N_j$, $j=0\dots m-1$.

\end{proof}

 \section{Surface spline approximation}\label{S:SSA}
\subsection{Approximation scheme}
We now develop the approximation scheme based on the integral identity introduced in Section \ref{S:ID}. 
The scheme and the accompanying error estimate
are generalizations of the scheme in \cite{Hdisk}. 
Specifically, the  approximation scheme takes the form
\begin{equation*}\label{4E:scheme}
T_{\Xi} f (x) =
\int_{\Omega} \Delta^m f(\alpha) k(x,\alpha)\, \dif\alpha
\mbox{}+\sum_{j=0}^{m-1}\int_{\partial \Omega} N_jf(\alpha) k_j(x,\alpha)\, \dif\sigma(\alpha) 
+p(x)
\end{equation*}
with $g_j =N_jf$ the 
auxiliary terms  and $p$ the polynomial from Theorem \ref{Main_Rep_Theorem}. 
The challenge is to
find suitable {\em replacement kernels}  $k (x,\alpha)  = \sum_{\xi}a(\alpha,\xi) \phi(x-\xi)$ and $k_j(x,\alpha) = \sum_{\xi}a_j(\alpha,\xi) \phi(x-\xi)$ so that the {\em error kernels}
\begin{eqnarray}
E(x,\alpha)&:=&|k(x,\alpha)-\phi(x-\alpha)|,\label{4errker1}\\
E_j(x,\alpha)&:=&|k_j(x,\alpha)-\lambda_{j,\alpha}\phi(x-\alpha)|,\ j = 0\dots m-1\label{4errker2}
\end{eqnarray}
are uniformly small and decay rapidly as $|x-\alpha|\to \infty$.

It follows that the (pointwise) error incurred from the approximation scheme can be estimated (for sufficiently smooth $f$) by
$$
|f(x) - T_{\Xi} f(x)|\le 
\int_{\Omega}
E(x,\alpha)
 |\Delta^m f(\alpha) |
\, \dif\alpha
+
\sum_{j=0}^{m-1}\int_{\partial \Omega} E_j(x,\alpha) |N_jf(\alpha)| \, \dif\sigma(\alpha). 
$$
From this it is clear that for $f\in W_p^{2m}(\Omega)$, the $L_p$ error is bounded by
\begin{equation}\label{Approx_Lp_error_operator_norms}
\|f - T_{\Xi} \|_{L_p(\Omega)}
\le 
\|\mathcal{E}\|_{p\to p}
 \|\Delta^m f |\|_{L_p(\Omega)}
+
\sum_{j=0}^{m-1}\| \mathcal{E}_j\|_{p\to p} \|N_j f\|_{L_p(\partial \Omega)}. 
\end{equation}
where $\mathcal{E}$ and $\mathcal{E}_j$ are the integral operators induced by the error kernels $E$ and $E_j$.

In the next subsection, we state the conditions on the centers necessary for a high rate of convergence, 
how these conditions imply uniform bounds and rapid decay of the error kernels -- 
measured in terms of a density parameter introduced in the conditions on the centers -- 
and we demonstrate that the fill distance $h$, 
cf. (\ref{fill_dist}), is connected to this density parameter. 
The following subsection states the main theorem and its consequences, 
including a corollary showing that an increase in density of centers 
near the boundary yields the optimal (boundary-free) approximation order.

\subsection{Error kernels}\label{SS:Error_kernels}
In this section we describe how to construct replacement kernels $k$ and $k_j$, $j=0\dots m-1$
and give pointwise estimates for the corresponding 
error kernels $E$ and $E_j$, $j=0\dots m-1$.
Following this, we give operator norms for the integral operators defined by $E$ and $E_j$, which leads to
components of the approximation error in (\ref{Approx_Lp_error_operator_norms}).

\paragraph{Interior kernel}
To construct $k(x,\alpha) = \sum_{\xi\in\Xi} a(\alpha,\xi)\phi(x-\xi)$, we require the properties of the 
coefficient kernel $(\alpha,\xi) \mapsto a(\alpha,\xi)$ of the following type.
 For $K,R>0$, and $M\in\nats$ 
 \begin{enumerate}
\item $\max_{\alpha\in \overline{\Omega}} \sum_{\xi\in\Xi} |a(\alpha,\xi)| \le K$
\item  For every $p\in \Pi_{M}$, $\sum_{\xi\in\Xi} a(\alpha,\xi) p (\xi) = p(\alpha)$.
\item If $|\alpha - \xi| >R$ then $a(\alpha,\xi) = 0$.
\end{enumerate}
We call such a coefficient kernel a stable, local polynomial
reproduction of order $M$, radius $R$ and stability $K$. Local polynomial reproductions
have a long history in RBF approximation and  related fields -- 
one may see their use in \cite{WuSch,JSW,Wend,DeRo,Hdisk}, for example.

We now recall a  result guaranteeing that such local polynomial reproductions exist for  regions with Lipschitz boundary 
satisfying an interior cone condition\footnote{a much weaker condition on $\Omega$ than we assume in this article}
with aperture given by the angle $\theta$ 
and radius $r$ -- namely, for regions $\Omega$ having the property that for every $\alpha \in \overline{\Omega}$ 
there is $\nu_{\alpha}$ so that the  cone
$$C(\alpha, r, \theta, \nu_{\alpha}): = 
\left\{x\in \reals^d\mid |x-\alpha|\le r,
\left\langle\frac{x-\alpha}{|x-\alpha|},\nu_{\alpha}\right\rangle
\ge
\cos \alpha\right\} 
$$
is contained in $\overline{\Omega}$.

The result we cite is  the so-called {\em norming set} result \cite[Theorem 3.14]{Wend}, which  
ensures that
for every 
$M\in \nats$, and $\Xi$ sufficiently dense (with $h = \max_{x\in\Omega}\dist(x,\Xi)$ sufficiently small -- 
i.e., bounded above by a constant depending on  $\Omega$ and $M$), appropriately rescaled cones $C$ contain subsets of $\Xi$ 
so that the  norm of a polynomial of degree $M$ over $C$ is controlled by its values on $\Xi\cap C$.
In short, there is a $\Gamma>0$ depending on the cone parameters $r,\theta$ of $\Omega$
so that
for every $p\in \Pi_M$ 
the uniform norm over the rescaled cone $C(\alpha) = C(\alpha,\Gamma M^2h, \theta, \nu_{\alpha})$ 
is controlled by the finite subset obtained from $\Xi$ (i.e., the norming set). Indeed,
\begin{equation}\label{norming_set}
\|p\|_{L_{\infty}(C(\alpha))}\le 2 \left\|p\left|_{\Xi\cap C(\alpha)}\right.\right\|_{\ell_{\infty}(\Xi\cap C(\alpha))}.
\end{equation}
Note that beside the requirement that $h$ is sufficiently small, the geometry of $\Xi$ does not play a role in this estimate.

Using (\ref{norming_set}), it  is possible to construct a functional $\mu_{\alpha}$ in the dual of $\ell_{\infty}(\Xi\cap C(\alpha))$ which represents  
$\delta_{\alpha} :p \mapsto p(\alpha)$. 
Namely, $\delta_{\alpha} p = \mu_{\alpha} (p\left|_{\Xi\cap C(\alpha)}\right.) =\sum_{\xi\in \Xi\cap C(\alpha)} a(\alpha,\xi) p(\xi)$
for some sequence $\bigl( a(\alpha,\xi) \bigr)_{\xi\in \Xi\cap C(\alpha)}\in \ell_1$.
 Because 
$\|\mu_{\alpha}\|_{\ell_{\infty}'} \le 2\|\delta_{\alpha}\|_{L_{\infty}'} \le 2$, we 
have $\sum_{\xi\in \Xi\cap C(\alpha)} |a(\alpha,\xi)| \le 2.$
We extend the sequence $ \bigl( a(\alpha,\xi) \bigr)_{\xi\in \Xi\cap C(\alpha)}$ by zero (i.e., $a(\alpha,\xi) =0$ for $\xi\notin C(\alpha)$) so that 
$\bigl( a(\alpha,\xi) \bigr)_{\xi\in \Xi }\in  \ell_1(\Xi)$

Let $M= 2m$ and use \cite[Theorem 3.14]{Wend} to generate the stable, local polynomial reproduction $a$. 
Then $k(x,\alpha)= \sum_{\xi\in\Xi} a(\alpha,\xi)\phi(x-\xi)$ is the replacement kernel. The error kernel satisfies, for every $x,\alpha\in \Omega$
$$E(x,\alpha) = |\phi(x-\alpha) - k(x,\alpha)| \le C h^{2m-d} \left(1+\frac{\dist(x,\alpha)}{h}\right)^{-(d+1)}$$ 
with a constant $C$ depending only on $M$ and the cone parameters $\theta$ and $\rho$.
In particular, $E$ satisfies
$$\max_{x\in\Omega}\int_{\Omega} E(x,\alpha) \dif \alpha \le C h^{2m},
\qquad \max_{\alpha\in\Omega}\int_{\Omega} E(x,\alpha) \dif x \le C h^{2m}.$$
It follows that for $1\le p\le \infty$, 
the integral operator $\mathcal{E}:f\mapsto \int_{\Omega}E(x,\alpha) f(\alpha) \dif \alpha$ is bounded, in $L_p$
like $\|\mathcal{E}\|_{p\to p} \le Ch^{2m}$ and for $f\in W_p^{2m}(\Omega)$
\begin{equation}\label{interior_error}
\left|\int_{\Omega}\Delta^m f(\alpha) \phi(x-\alpha) \dif \alpha -
 \sum_{\xi\in\Xi} A_{\xi} \phi(x-\xi) \right| \le 
 Ch^{2m}\|\Delta^m f\|_{L_p(\Omega)}.
 \end{equation}
 where $A_{\xi} :=\int_{\Omega} a(\alpha,\xi) \Delta^m f(\alpha)\dif \alpha$.
\paragraph{Boundary kernels}
To construct $k_j(x,\alpha) = \sum_{\xi\in\Xi} a_j(\alpha,\xi)\phi(x-\xi)$, we require similar properties of the 
coefficient kernels. For $K,R>0$, and $M\in\nats$  we seek $a_j$ so that
 \begin{enumerate}
\item  For every $p\in \Pi_{M}$, $\sum_{\xi\in\Xi} a(\alpha,\xi) p (\xi) = \lambda_jp(\alpha)$.
\item If $|\alpha - \xi| >R$ then $a_j(\alpha,\xi) = 0$.
\item $\max_{\alpha\in \partial \Omega} \sum_{\xi\in\Xi} |a_j(\alpha,\xi)| \le K h^{-j}$
\end{enumerate}

We can again use the norming set result (\ref{norming_set}) to build representers for the functionals 
$\delta_{\alpha} \Lambda_j$, which have norms given by Bernstein's inequality
$| \Lambda_j p(\alpha)| \le C_{\Omega} ( M^2/h)^{j} \|p\|_{L_{\infty}(C(\alpha))} $
 (Bernstein's inequality is given, for example, in \cite[Proposition 11.6]{Wend}).
  It follows that
there is  a representer $\mu_{j,\alpha}$ in the dual of $\ell_{\infty}(\Xi\cap C(\alpha))$ for the functional $p \mapsto \Lambda_j p(\alpha)$
in the sense that
$ \Lambda_j p(\alpha) = \mu_{j,\alpha} (p\left|_{\Xi\cap C(\alpha)}\right.) =\sum_{\xi\in \Xi\cap C(\alpha)} a_j(\alpha,\xi) p(\xi)$
where $\bigl(a_j(\alpha,\xi)\bigr)_{\xi\in\Xi\cap C(\alpha)}$ so that
$$
\sum_{\xi\in \Xi\cap C(\alpha)} |a_j(\alpha,\xi)| 
= 
\|\mu_{j,\alpha}\|
\le 
2 \|p\mapsto \Lambda_j p(\alpha)\| 
\le 
C_{\Omega}M^{2j} h^{-j}  
$$
(this is \cite[Theorem 11.8]{Wend}). We extend by zero so that $a_j(\alpha,\xi)= 0$ for  $\xi \notin C(\alpha)$.

The replacement kernels are given by $k_j(x,\alpha) = \sum_{\xi\in\Xi}a_j(\alpha,\xi) \phi(x-\xi)$. 
In this case, it suffices to take $M = 2m-j$.
The error kernel satisfies, for every $x,\alpha\in \Omega$
$$
E_j(x,\alpha) = |\lambda_{j,\alpha}\phi(x-\alpha) - k_j(x,\alpha)| 
\le 
C h^{2m-d-j} \left(1+\frac{\dist(x,\alpha)}{h}\right)^{-(d+1)}
$$ %2m-d -j -(2m-j+1)$$
with a constant $C$ depending only on $M$ and the cone parameters $\theta$ and $\rho$.
In particular, $E_j$ satisfies
$$\max_{x\in\Omega}\int_{\partial \Omega} E_j(x,\alpha) \dif \sigma(\alpha) \le C h^{2m-j-1},
\qquad \max_{\alpha\in\partial \Omega}\int_{\Omega} E_j(x,\alpha) \dif x \le C h^{2m-j}.$$
It follows that for $1\le p\le \infty$, 
the  operator 
$\mathcal{E}_j:f\mapsto \int_{\partial \Omega}E_j(x,\alpha) f(\alpha) \dif \sigma(\alpha)$ 
is bounded, in $L_p$ like 
$\|\mathcal{E}_j\|_{p\to p} \le Ch^{2m-j-1+1/p}$ and for $f\in W_p^{2m}(\Omega)$
\begin{multline}\label{boundary_errors}
\left|\int_{\partial \Omega}N_j f(\alpha) \lambda_{j,\alpha} \phi(x-\alpha) \dif \sigma(\alpha) -
 \sum_{\xi\in\Xi} A_{j,\xi} \phi(x-\xi) \right| \\
 \le 
 Ch^{2m-j-1+1/p}\|N_j f\|_{L_p(\Omega)}.
 \end{multline}
 where $A_{j,\xi} := \int_{\partial\Omega} a_j(\alpha,\xi) N_j f(\alpha)\dif \sigma(\alpha)$.
 
 We are now in a position to give approximation rates for the operators $T_{\Xi}$  for functions of {\em full} smoothness. 
 %%%%%
 %
 %: approximation_by_operator
 %
 %%%%%
 \begin{lemma} \label{approximation_by_operator}
 Let $f\in W_p^{2m}(\Omega)$ (or $C^{2m}(\overline{\Omega})$ in case $p=\infty$). Then there are 
positive constants $h_0$ and $C$ (depending on $\Omega$ and $m$) so that for all $h\le h_0$,
 $$\|f - T_{\Xi} f\|_{L_p(\Omega)} \le C\left(h^{2m}  \|\Delta^m f\|_{L_p(\Omega)}
 +\sum_{j=0}^{m-1} h^{2m-j-1+\frac1{p}} \|N_j f\|_{L_p(\partial \Omega)}\right).$$
 \end{lemma}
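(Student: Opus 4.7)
The plan is to subtract the discrete approximation scheme from the integral representation \eqref{Main_Rep} and estimate the resulting expression in $L_p$ by appealing to the kernel-wise operator norm bounds established in Section \ref{SS:Error_kernels}. Concretely, for $f \in W_p^{2m}(\Omega)$ (or $C^{2m}(\overline{\Omega})$ when $p=\infty$), Theorem \ref{Main_Rep_Theorem} grants the representation \eqref{Main_Rep}, and the definition of $T_\Xi f$ shares the same polynomial term $p(x)$ as well as the same functions $\Delta^m f$ and $N_j f$ in the integrands; so forming the difference causes $p$ to cancel and collapses the problem onto the error kernels $E$ and $E_j$ from \eqref{4errker1} and \eqref{4errker2}.

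Next, I would write the pointwise bound
\begin{equation*}
|f(x) - T_\Xi f(x)| \le \int_{\Omega} E(x,\alpha)\,|\Delta^m f(\alpha)|\, \dif\alpha + \sum_{j=0}^{m-1} \int_{\partial \Omega} E_j(x,\alpha)\,|N_j f(\alpha)|\, \dif\sigma(\alpha),
\end{equation*}
which holds for a.e.\ $x\in\Omega$ by the triangle inequality. Taking the $L_p(\Omega)$ norm in $x$ and applying Minkowski's inequality yields
\begin{equation*}
\|f - T_\Xi f\|_{L_p(\Omega)} \le \|\mathcal{E}(|\Delta^m f|)\|_{L_p(\Omega)} + \sum_{j=0}^{m-1} \|\mathcal{E}_j(|N_j f|)\|_{L_p(\Omega)},
\end{equation*}
where $\mathcal{E}$ and $\mathcal{E}_j$ are the integral operators introduced in \eqref{Approx_Lp_error_operator_norms}.

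At this point I simply invoke the operator-norm estimates developed in Section \ref{SS:Error_kernels}: Schur's test applied to the pointwise decay bounds $E(x,\alpha) \le Ch^{2m-d}(1+\dist(x,\alpha)/h)^{-(d+1)}$ and $E_j(x,\alpha) \le Ch^{2m-d-j}(1+\dist(x,\alpha)/h)^{-(d+1)}$ gives $\|\mathcal{E}\|_{p\to p} \le Ch^{2m}$ and $\|\mathcal{E}_j\|_{p\to p} \le Ch^{2m-j-1+1/p}$ respectively (the asymmetry in the exponents of $\mathcal{E}_j$ comes from integrating in $\alpha$ over the $(d-1)$-dimensional boundary versus integrating in $x$ over a $d$-dimensional neighborhood). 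Substituting these norms into the displayed inequality yields exactly the asserted bound.

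The only precondition to keep track of is that $h$ must be sufficiently small for the norming-set construction \eqref{norming_set} (and its Bernstein-type analogue for $\lambda_j$) to produce the stable local polynomial reproductions $a(\alpha,\xi)$ and $a_j(\alpha,\xi)$ of the requisite orders $M=2m$ and $M=2m-j$. This constant $h_0$ depends only on $\Omega$ and $m$, as indicated in the cited results \cite[Theorems 3.14 and 11.8]{Wend}. There is no real obstacle here; the lemma is essentially the synthesis of the representation theorem, the definition of $T_\Xi$, and the kernel-level estimates already proved.
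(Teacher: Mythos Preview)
Your proposal is correct and follows essentially the same route as the paper, which simply cites Theorem \ref{Main_Rep_Theorem} together with the error-kernel estimates \eqref{interior_error} and \eqref{boundary_errors}. You have supplied the details (cancellation of $p$, pointwise bound, Minkowski, Schur-type operator norms, and the $h_0$ threshold from the norming-set construction) that the paper leaves implicit, but the argument is the same.
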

  %%%%%
 %
 %%%%%
 \begin{proof}
 The lemma follows directly from Theorem \ref{Main_Rep_Theorem}, (\ref{interior_error}) and (\ref{boundary_errors}).
 \end{proof}
  %%%%%
 %
 %%%%%
 %%%%%%%%
 %
 %%%%%%%%
 \subsection{Approximation results}
 Our first result about surface spline approximation is broken into  three parts, treating approximation in $L_p$, with 
 $1<p<\infty$, treating approximation in $L_1$ and approximation in $L_{\infty}$. 
 The error estimates follow along the lines of \cite{Hdisk}. 
 
 In each case we make use of  a $K$-functional argument to allow the operator to handle functions of lower smoothness.
  Let $f_e$ be the universal extension to $\reals^d$ of the target function $f$ defined on $\Omega$
   guaranteed by 
 \cite[Theorem 2.2]{Rych}.
   Let $\eta:\reals^d\to [0,1]$ be a compactly supported $C^{\infty}$ function satisfying 
 $\int_{\reals^d}  x^{\alpha}\eta(x) \dif x= \delta(|\alpha|)$ for $|\alpha|\le 2m$. 
 This ensures that $\eta* p = p$ for  all $p\in \Pi_{2m}$.
 The $L_1$ preserving 
 dilation of $\eta$ by $h$ is $\eta_h := h^{-d} \eta(\cdot/h)$.
 We define  $S_h f\in W_p^{2m}(\reals^d)$ 
 as $S_h f = f_e * \eta_h\in C^{\infty}(\reals^d)$.
 
 In short, for a function having smoothness $m+1/p$ (this is made precise below) we have that 
 $S_h f\in C^{\infty}(\Omega)$. Consequently,
 \begin{itemize}
\item $\|f-S_h f\|_p=\mathcal{O}( h^{m+1/p} )$;
\item $\|S_h f\|_{W_p^{2m}(\Omega)} = \mathcal{O}(h^{1/p-m})$;
\item for every $0<s<2m$, $\|S_h f\|_{B_{p,1}^{s}(\Omega)} = \mathcal{O}(h^{m+1/p-s})$.
 \end{itemize}
 It follows that we can apply Lemma \ref{approximation_by_operator} to estimate 
 $\|S_h f-T_{\Xi} S_h f\|_{p}$. 
  We have
 that 
 $\|N_j S_h f\|_{L_p(\partial \Omega)} \le C \|S_h f\|_{B_{p,1}^{2m-j-1+1/p}( \Omega)}
 =\mathcal{O}(
 h^{m+1/p-(2m-j-1+1/p)} )= \mathcal{O}(h^{j+1-m})
 $
  by Theorem 
 \ref{Main_Rep_Theorem}.
Using this, we control the boundary error terms appearing in Lemma \ref{approximation_by_operator}:
 \begin{eqnarray*}
 h^{2m-j-1+1/p}\|N_j S_h f\|_{L_p(\partial \Omega)} 
& \le& 
 C h^{2m-j-1+1/p}\|S_h f\|_{B_{p,1}^{2m-j-1+1/p}( \Omega)}\\
& =&\mathcal{O}(h^{m+1/p}).
 \end{eqnarray*}
  \begin{theorem}[Approximation in $L_p$, $1<p<\infty$]
 Let  $1<p<\infty$ and suppose $f\in B_{p,1}^{m+1/p}(\Omega)$. There exist positive constants $C,h_0$ so that
 for every $\Xi\subset \Omega$ satisfying $h\le h_0$  there is $s_{f}\in S_{m-1}(\Xi)$ so that
 $$\|f-s_f\|_p \le h^{m+1/p} \|f\|_{B_{p,1}^{m+1/p}(\Omega)}.$$
 \end{theorem}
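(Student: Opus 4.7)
The plan is to invoke the $K$-functional setup from the paragraphs immediately preceding the theorem and take $s_f := T_\Xi(S_h f)$, where $S_h f = f_e \ast \eta_h$ is the mollified universal extension. By the triangle inequality,
\[
\|f - s_f\|_p \le \|f - S_h f\|_p + \|S_h f - T_\Xi(S_h f)\|_p,
\]
and the goal is to show each term is $\mathcal{O}\bigl(h^{m+1/p}\|f\|_{B_{p,1}^{m+1/p}(\Omega)}\bigr)$.

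The first term is bounded by standard mollification estimates in Besov spaces: because $\eta$ has vanishing polynomial moments through degree $2m$ and Rychkov's extension preserves Besov regularity, the estimate $\|f - S_h f\|_p \le C h^{m+1/p}\|f\|_{B_{p,1}^{m+1/p}(\Omega)}$ is one of the three bulleted properties of $S_h f$ recorded in the text. For the second term, $S_h f\in C^\infty(\reals^d)\subset W_p^{2m}(\Omega)$, so Lemma \ref{approximation_by_operator} yields
\[
\|S_h f - T_\Xi S_h f\|_p \le C\Bigl( h^{2m}\|\Delta^m S_h f\|_p + \sum_{j=0}^{m-1} h^{2m-j-1+1/p}\|N_j S_h f\|_{L_p(\partial \Omega)}\Bigr).
\]
The interior term is $h^{2m}\|S_h f\|_{W_p^{2m}(\Omega)} = \mathcal{O}(h^{m+1/p})$ by the second bullet, while for each boundary term Theorem \ref{Main_Rep_Theorem} gives $\|N_j S_h f\|_{L_p(\partial\Omega)} \le C\|S_h f\|_{B_{p,1}^{2m-j-1+1/p}(\Omega)} = \mathcal{O}(h^{j+1-m})$, so each is $\mathcal{O}(h^{m+1/p})$ after multiplying by $h^{2m-j-1+1/p}$.

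The step I expect to be the main obstacle is verifying that $s_f$ actually lies in $S_{m-1}(\Xi)$: that the coefficients $A_\xi$ of $T_\Xi(S_h f) = \sum_\xi A_\xi \phi(\cdot-\xi) + p$ satisfy $\sum_\xi A_\xi q(\xi) = 0$ for every $q\in\Pi_{m-1}$. Using the polynomial reproduction properties of $a$ and $a_j$ listed in Section \ref{SS:Error_kernels}, this reduces to showing
\[
\int_{\Omega} \Delta^m (S_h f)\cdot q\,\dif x + \sum_{j=0}^{m-1}\int_{\partial\Omega} N_j (S_h f)\cdot \lambda_j q\,\dif\sigma = 0.
\]
I would establish this by substituting (\ref{enn_jay}) for $N_j$ and applying the polyharmonic Green's identity (\ref{GreensFormula}) to the pairs $(S_h f, q)$ and $(F_1,q)$, where $F_1$ denotes the polyharmonic Dirichlet solution associated to $S_h f$: because $\Delta^m q = 0 = \Delta^m F_1$ in $\Omega$, and because $\lambda_k q = 0$ for $k\ge m$, these identities eliminate the $\lambda_{2m-k-1}(S_h f)$ and $\lambda_{2m-k-1}(F_1)$ boundary contributions. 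What remains is $\sum_k \int_{\partial\Omega} g_k\,\lambda_k q\,\dif\sigma$, which vanishes by the side constraint $P^{\t}\bfg = 0$ built into the modified system $\LLs$. Once this moment condition is in hand, combining the two error bounds delivers the stated inequality.
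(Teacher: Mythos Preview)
Your proposal is correct and follows essentially the same route as the paper: set $s_f = T_\Xi(S_h f)$, split via the triangle inequality, bound $\|f-S_hf\|_p$ by the mollification estimate, and bound $\|S_hf - T_\Xi S_hf\|_p$ by Lemma \ref{approximation_by_operator} together with $\|S_hf\|_{W_p^{2m}} = \mathcal{O}(h^{1/p-m})$ and $\|N_j S_hf\|_{L_p(\partial\Omega)} \le C\|S_hf\|_{B_{p,1}^{2m-j-1+1/p}} = \mathcal{O}(h^{j+1-m})$ from Theorem \ref{Main_Rep_Theorem}.

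The one difference is that you explicitly address the moment condition $\sum_\xi A_\xi q(\xi)=0$ for $q\in\Pi_{m-1}$, which the paper's proof simply omits (it ends with ``from which the theorem follows'' after the error estimate). Your argument for this is sound: the polynomial reproduction properties of $a$ and $a_j$ reduce the moment condition to the vanishing of $\int_\Omega q\,\Delta^m(S_hf) + \sum_j\int_{\partial\Omega}\lambda_j q\cdot N_j(S_hf)$; substituting (\ref{enn_jay}) and applying Green's formula (\ref{GreensFormula}) to the pairs $(S_hf,q)$ and $(f_1,q)$, using $\Delta^m q=0=\Delta^m f_1$ and $\lambda_k q=0$ for $k\ge m$, kills the high-order trace terms; the residual $\sum_j\langle g_j,\lambda_j q\rangle$ vanishes because the Dirichlet solution in Theorem \ref{polyharmonic_dirichlet_solution} is obtained as $(L^\sharp)^{-1}(0,\bfh)$, which forces $P^{\t}\bfg=0$. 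So you have in fact supplied a detail the paper leaves implicit.
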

 \begin{proof}
 It is an easy exercise to demonstrate the inequalities
 \begin{eqnarray*}
 \|f_e -S_h f\|_{L_p(\reals^d)} &\le& C h^{m+1/p} \|f_e\|_{B_{p,1}^{m+1/p}(\reals^d)} \le C h^{m+1/p} \|f\|_{B_{p,1}^{m+1/p}(\Omega)},\\
 \|S_h f\|_{W_p^{2m}(\reals^d)} &\le& C h^{1/p-m} \|f_e\|_{B_{p,1}^{m+1/p}(\reals^d)} \le C h^{1/p-m} \|f_e\|_{B_{p,1}^{m+1/p}(\Omega)}
 \end{eqnarray*}
 and, for $0<s<2m$, 
  $$
  \|S_h f\|_{B_{p,1}^{s}(\reals^d)} 
  \le C h^{m+1/p-s} \|f_e\|_{B_{p,1}^{m+1/p}(\reals^d)} \le 
  C h^{m+1/p-s} \|f\|_{B_{p,1}^{m+1/p}(\Omega)}.
  $$
  
By the second inequality, we have   
  $
  \|\Delta^m S_h f\|_{L_p(\Omega)}\le C h^{1/p-m} \|f\|_{B_{p,1}^{m+1/p}(\Omega)}
  $. Theorem \ref{Main_Rep_Theorem} and the third estimate imply that
  $$
  \|N_j S_h f\|_{L_p(\partial \Omega)} 
  \le   
C  \|S_h f\|_{B_{p,1}^{2m-j-1+1/p}(\Omega)}
  \le 
  C h^{j+1-m} \|f\|_{B_{p,1}^{m+1/p}(\Omega)}.
  $$ 
  We   now   apply $T_{\Xi}$ to $S_h f\left|_{\Omega}\right.$ (the restriction of $S_h f$ to $\Omega$).
By %
Lemma \ref{approximation_by_operator}, we see that 
 $\| S_h f\left|_{\Omega}\right.- T_{\Xi}(S_h f\left|_{\Omega}\right.)\|_{L_p(\Omega)} \le Ch^{2m} h^{1/p-m} \|f_e\|_{B_{p,1}^{m+1/p}(\Omega)}$ and 
 \begin{equation}
 \|f - T_{\Xi}(S_h f\left|_{\Omega})\right.\|_{L_p(\Omega)} 
 \le
 C h^{m+1/p} \|f\|_{B_{p,1}^{m+1/p}(\Omega)}   
  \end{equation}
 from which the theorem follows.
\end{proof}
%%%%%%%%%%%
%
%%%%%%%%%%%
In order to get a result  result for  approximation in $L_1$, we assume slightly more more smoothness for the target function 
\footnote{This is because of challenges in bounding pseudo-differential operators $N_j$  on spaces measuring smoothness in $L_1$. 
Although it may appear to be an artifact of working in this setting  
(after all, there are many pseudo-differential operators that do not have such difficulties: constant coefficient differential operators, for instance) 
in the case where $\Omega$ is the disk in 
$\reals^2$, it is known that the operators $N_j$ are not bounded from $W_1^{m+1}$ to $L_1$.}

 \begin{theorem}[Approximation in $L_1$]
 Let  $\epsilon>0$ and suppose $f\in B_{1,1}^{m+1+\epsilon}(\Omega)$. 
 There exist positive constants $C_{\epsilon},h_0$ so that
 for every $\Xi\subset \Omega$ satisfying $h\le h_0$  there is $s_{f}\in S_{m-1}(\Xi)$ so that
 $$\|f-s_f\|_1 \le C_{\epsilon} h^{m+1/p} \|f\|_{B_{1,1}^{m+1/p}(\Omega)}.$$
 \end{theorem}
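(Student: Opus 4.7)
The plan is to mirror the mollification strategy of the $1 < p < \infty$ case, with the central new obstacle being that Theorem~\ref{Main_Rep_Theorem} does not provide $L_1$ boundedness of the trace operators $N_j$. I would let $f_e$ denote the universal extension of $f$ to $\reals^d$ from \cite{Rych}, and set $S_h f := f_e * \eta_h$. Standard mollifier estimates yield $\|f - S_h f\|_{L_1(\Omega)} \le C h^{m+1}\|f\|_{B_{1,1}^{m+1+\epsilon}(\Omega)}$ and $\|\Delta^m S_h f\|_{L_1(\Omega)} \le C h^{1-m}\|f\|_{B_{1,1}^{m+1}(\Omega)}$, so the interior contribution of Lemma~\ref{approximation_by_operator} applied to $S_h f|_\Omega$ is already bounded by $h^{2m}\|\Delta^m S_h f\|_{L_1(\Omega)} \le C h^{m+1}\|f\|_{B_{1,1}^{m+1+\epsilon}(\Omega)}$.

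The heart of the proof is controlling the boundary terms $h^{2m-j}\|N_j S_h f\|_{L_1(\partial \Omega)}$ for $j = 0, \dots, m-1$, since Theorem~\ref{Main_Rep_Theorem} provides no estimate for $N_j$ at $p = 1$. I would exploit the compactness of $\partial \Omega$, writing, for any $p > 1$,
$$\|N_j S_h f\|_{L_1(\partial \Omega)} \le C_p \|N_j S_h f\|_{L_p(\partial \Omega)} \le C_p \|S_h f\|_{B_{p,1}^{2m-j-1+1/p}(\Omega)},$$
the second inequality being Theorem~\ref{Main_Rep_Theorem} applied with $s = 0$. Combining the Besov embedding $B_{1,1}^{m+1+\epsilon}(\reals^d) \hookrightarrow B_{p,1}^{m+1+\epsilon - d(1-1/p)}(\reals^d)$ with the standard mollifier bound $\|\eta_h * g\|_{B_{p,1}^s} \le C h^{s' - s}\|g\|_{B_{p,1}^{s'}}$ for $s \ge s'$ would give
$$\|S_h f\|_{B_{p,1}^{2m-j-1+1/p}(\reals^d)} \le C h^{(j+1-m) + \epsilon - (d-1)(1 - 1/p)}\|f\|_{B_{1,1}^{m+1+\epsilon}(\Omega)}.$$
Choosing $p$ sufficiently close to $1$ (explicitly $p \le 1 + \epsilon/(d-1-\epsilon)$) forces $\epsilon - (d-1)(1 - 1/p) \ge 0$, and multiplying by $h^{2m-j}$ then gives $h^{2m-j}\|N_j S_h f\|_{L_1(\partial \Omega)} \le C_\epsilon h^{m+1}\|f\|_{B_{1,1}^{m+1+\epsilon}(\Omega)}$.

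Assembling the pieces via Lemma~\ref{approximation_by_operator} and the triangle inequality $\|f - T_\Xi S_h f\|_{L_1(\Omega)} \le \|f - S_h f\|_{L_1(\Omega)} + \|S_h f - T_\Xi S_h f\|_{L_1(\Omega)}$ then yields the claimed rate, with $s_f := T_\Xi(S_h f|_\Omega) \in S_{m-1}(\Xi, \phi_{m,d})$. The hardest step is the boundary estimate above: the embedding $L_p(\partial \Omega) \hookrightarrow L_1(\partial \Omega)$ forces $p > 1$, which incurs a loss $h^{-d(1-1/p)}$ in the Besov-embedding step; it is precisely the $\epsilon$ of extra smoothness beyond $B_{1,1}^{m+1}$ that permits $p$ to be chosen close enough to $1$ to absorb this loss, explaining (as the footnote foreshadows) why strict regularity above $B_{1,1}^{m+1}$ is needed and why the constant $C_\epsilon$ must blow up as $\epsilon \to 0$.
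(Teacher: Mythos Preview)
Your proposal is correct and follows essentially the same approach as the paper. Both arguments use the mollification $S_h f$, the compactness embedding $L_p(\partial\Omega)\hookrightarrow L_1(\partial\Omega)$, Theorem~\ref{Main_Rep_Theorem} at an auxiliary exponent $p>1$, and a Besov Sobolev-type embedding to return to the $L_1$ scale, with the same critical choice $p=(d-1)/(d-1-\epsilon)$; the only cosmetic difference is that the paper applies the mollifier estimate in the $B_{1,1}$ scale and then embeds into $B_{p,1}$, whereas you embed $f_e$ first and mollify in the $B_{p,1}$ scale, which amounts to interchanging two commuting steps.
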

 \begin{proof}
As in the previous case, one easily demonstrate the inequalities
 \begin{eqnarray*}
 \|f_e - S_h f\|_{L_1(\reals^d)} &\le& C h^{m+1} \|f_e\|_{B_{1,1}^{m+1+\epsilon}(\reals^d)} \le C h^{m+1} \|f\|_{B_{1,1}^{m+1+\epsilon}(\Omega)},\\
 \|S_h f\|_{W_1^{2m}(\reals^d)} &\le& C h^{1-m} \|f_e\|_{B_{1,1}^{m+1}(\reals^d)} \le C h^{1-m} \|f_e\|_{B_{1,1}^{m+1}(\Omega)}
 \end{eqnarray*}
 and, for $0<s<2m$ and $\epsilon>0$
  $$
  \|S_h f\|_{B_{1,1}^{s+\epsilon}(\reals^d)} 
  \le C h^{m+1-s} \|f_e\|_{B_{1,1}^{m+1+\epsilon}(\reals^d)} \le 
  C h^{m+1-s} \|f\|_{B_{1,1}^{m+1+\epsilon}(\Omega)}.
  $$
Let $p = \frac{d-1}{d-1-\epsilon}$. 
 By compactness, we have $\|N_j S_h f\|_{L_1(\partial \Omega)} \le C \|N_j S_h f\|_{L_p(\partial \Omega)} $.
 By  Theorem \ref{Main_Rep_Theorem}, 
$ \|N_j S_h f\|_{L_p(\partial \Omega)}\le C \|S_h f\|_{B_{p,1}^{2m-j-1+1/p}( \Omega)}$.
Finally, an application of the Sobolev   embedding theorem for Besov spaces, \cite[Theorem 2.7.1]{Trieb1}, we have 
$\|S_h f\|_{B_{p,1}^{2m-j-1+1/p}( \Omega)}\le C \|S_h f\|_{B_{1,1}^{2m-j-1+\epsilon}( \Omega)}$.
Together,
  we obtain 
 $$
 \|N_j S_h f\|_{L_1(\partial \Omega)} 
 \le 
 C \|S_h f\|_{B_{1,1}^{2m-j-1+\epsilon}( \Omega)}
   \le 
  C h^{j+1-m} \|f\|_{B_{1,1}^{m+1+\epsilon}(\Omega)}.
 $$ 
 
 Applying $T_{\Xi}$ to the restriction $S_h f\left|_{\Omega}\right.$, we see that 
 $$\| S_h f\left|_{\Omega}\right.- T_{\Xi}(S_h f\left|_{\Omega}\right.)\|_{L_1(\Omega)} \le Ch^{2m} h^{1-m} \|f_e\|_{B_{1,\infty}^{m+1}(\Omega)}$$ 
 and 
 \begin{equation}
 \|f - T_{\Xi}(S_hf\left|_{\Omega})\right.\|_{L_1(\Omega)} 
 \le
 C h^{m+1} \|f\|_{B_{1,\infty}^{m+1+\epsilon}(\Omega)}   
  \end{equation}
 from which the theorem follows.
\end{proof}

The final case follows the same lines. 
 \begin{theorem}[Approximation in $L_{\infty}$]
 Let  $\epsilon>0$ and suppose $f\in C^{m+\epsilon}(\overline{\Omega})$. 
 There exist positive constants $C,h_0$ so that
 for every $\Xi\subset \Omega$ satisfying $h\le h_0$  there is $s_{f}\in S_{m-1}(\Xi)$ so that
 $$\|f-s_f\|_{\infty} \le h^{m} \|f\|_{C^{m+\epsilon}(\overline{\Omega})}.$$
 \end{theorem}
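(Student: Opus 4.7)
The plan is to follow the template of the preceding two theorems: extend $f$ to $f_e\in C^{m+\epsilon}(\reals^d)$ via Rychkov's universal extension, mollify to form $S_h f := f_e*\eta_h\in C^{\infty}(\reals^d)$, apply Lemma \ref{approximation_by_operator} to $S_h f\in C^{2m}(\overline{\Omega})$, and conclude by the triangle inequality. The moment conditions on $\eta$ together with Taylor's theorem applied at H\"older order $m+\epsilon$ yield the standard mollifier estimates
\begin{equation*}
\|f_e - S_h f\|_{L_\infty(\reals^d)}\le C h^{m+\epsilon}\|f\|_{C^{m+\epsilon}(\overline{\Omega})},\qquad \|S_h f\|_{C^k(\reals^d)}\le C h^{m+\epsilon-k}\|f\|_{C^{m+\epsilon}(\overline{\Omega})}
\end{equation*}
for any integer $k>m+\epsilon$. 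In particular the interior contribution in Lemma \ref{approximation_by_operator} satisfies $h^{2m}\|\Delta^m S_h f\|_{L_\infty(\Omega)}\le C h^{m+\epsilon}\|f\|_{C^{m+\epsilon}(\overline{\Omega})}$, which is already $O(h^m)$.

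The main obstacle is controlling $h^{2m-j-1}\|N_j S_h f\|_{L_\infty(\partial\Omega)}$: Theorem \ref{Main_Rep_Theorem} gives the mapping properties of $N_j$ only in the range $1<p<\infty$, and the $p=\infty$ form of Lemma \ref{approximation_by_operator} offers no $h^{1/p}$ reserve at the boundary. My plan is to exploit the excess H\"older smoothness $\epsilon$ via the Sobolev embedding $H_p^s(\partial\Omega)\hookrightarrow L_\infty(\partial\Omega)$, valid for any $s>(d-1)/p$ with $1<p<\infty$. Given $\epsilon>0$, fix positive $\delta,\delta'$ and a finite $p$ with $d/p+\delta+\delta'<\epsilon$; set $s:=(d-1)/p+\delta$ and $\tau:=s+2m-j-1+1/p=2m-j-1+d/p+\delta$. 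Sobolev embedding together with Theorem \ref{Main_Rep_Theorem} yields
\begin{equation*}
\|N_j S_h f\|_{L_\infty(\partial\Omega)}\le C\,\|N_j S_h f\|_{H_p^s(\partial\Omega)}\le C\,\|S_h f\|_{B_{p,1}^{\tau}(\Omega)}.
\end{equation*}
The chain of embeddings $C^k(\overline{\Omega})\hookrightarrow W_p^k(\Omega)\hookrightarrow B_{p,1}^{k-\delta'}(\Omega)$ on the bounded domain $\Omega$, valid for any integer $k>\tau+\delta'$ with $k>m+\epsilon$, combined with the mollifier estimate, produces $\|S_h f\|_{B_{p,1}^\tau(\Omega)}\le C h^{m+\epsilon-k}\|f\|_{C^{m+\epsilon}}$. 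Choosing $k$ just above $\max(\tau+\delta',m+\epsilon)$ then yields
\begin{equation*}
h^{2m-j-1}\|N_j S_h f\|_{L_\infty(\partial\Omega)}\le C h^{m+\epsilon-d/p-\delta-\delta'}\|f\|_{C^{m+\epsilon}(\overline{\Omega})},
\end{equation*}
which by our choice of parameters is $O(h^m)$.

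Assembling the pieces, Lemma \ref{approximation_by_operator} yields $\|S_h f-T_\Xi S_h f\|_{L_\infty(\Omega)}\le C h^m\|f\|_{C^{m+\epsilon}(\overline{\Omega})}$, and the triangle inequality combined with the $L_\infty$ mollifier bound closes the argument with $s_f := T_\Xi S_h f\in S_{m-1}(\Xi,\phi_{m,d})$. The hard part is thus the parameter balancing in the boundary estimate: the Sobolev embedding on $\partial\Omega$, the mapping property of $N_j$ on $\Omega$, and the mollifier bound must all be calibrated simultaneously so that the combined regularity loss $d/p+\delta+\delta'$ in passing through the intermediate space $B_{p,1}^{\tau}(\Omega)$ remains strictly smaller than the available smoothness surplus $\epsilon$.
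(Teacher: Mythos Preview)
Your overall strategy matches the paper's exactly: mollify via $S_h f$, apply Lemma~\ref{approximation_by_operator} at $p=\infty$, and handle $\|N_j S_h f\|_{L_\infty(\partial\Omega)}$ by first embedding into $H_p^s(\partial\Omega)$ for a large finite $p$ (Sobolev embedding on the $(d-1)$-dimensional boundary) and then invoking Theorem~\ref{Main_Rep_Theorem}. The paper chooses $s=\epsilon/2$ with $p$ large enough that $H_p^{\epsilon/2}(\partial\Omega)\hookrightarrow L_\infty(\partial\Omega)$; your parametrisation $s=(d-1)/p+\delta$ is equivalent.

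There is, however, a genuine quantitative gap in your final embedding step. You bound $\|S_h f\|_{B_{p,1}^{\tau}(\Omega)}$ by passing through an \emph{integer-order} space $C^k(\overline\Omega)$, obtaining $\|S_h f\|_{B_{p,1}^\tau}\le C h^{m+\epsilon-k}$. Your claimed exponent $m+\epsilon-d/p-\delta-\delta'$ would require $k=\tau+\delta'=2m-j-1+d/p+\delta+\delta'$ exactly, but this quantity is not an integer. For $0<\epsilon<1$ (forcing $d/p+\delta+\delta'<1$) the smallest admissible integer is $k=2m-j$, and the exponent you actually produce is
\[
h^{2m-j-1}\cdot h^{m+\epsilon-(2m-j)}=h^{\,m-1+\epsilon},
\]
which falls short of $h^m$. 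So the argument as written fails precisely for the small-$\epsilon$ regime the theorem is meant to cover.

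The remedy is the one the paper uses: replace the integer $C^k$ step by a \emph{non-integer} H\"older estimate. The mollifier bound $\|S_h f\|_{C^s(\reals^d)}\le C h^{m+\epsilon-s}\|f\|_{C^{m+\epsilon}(\overline\Omega)}$ holds for every real $s$ with $m<s<2m$, and on the bounded domain one has
\[
C^s(\overline\Omega)=B_{\infty,\infty}^s(\Omega)\hookrightarrow B_{p,\infty}^s(\Omega)\hookrightarrow B_{p,1}^{\tau}(\Omega)
\]
for any $\tau<s$. Taking $s=\tau+\delta'$ (now a legitimate non-integer choice) yields $\|S_h f\|_{B_{p,1}^\tau}\le C h^{m+\epsilon-\tau-\delta'}$, and hence $h^{2m-j-1}\|N_j S_h f\|_{L_\infty(\partial\Omega)}\le C h^{m+\epsilon-d/p-\delta-\delta'}$, closing the argument. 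Equivalently, one can bypass H\"older spaces altogether and prove $\|S_h f\|_{B_{p,1}^\tau(\Omega)}\le C h^{m+\epsilon-\tau}\|f\|_{C^{m+\epsilon}}$ directly by real interpolation between two integer-order $W_p^k$ mollifier bounds.
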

 \begin{proof}
In this case, we have
 \begin{eqnarray*}
 \|f_e - S_h f\|_{L_{\infty}(\reals^d)} &\le& C h^{m} \|f_e\|_{C^{m}(\reals^d)} \le C h^{m} \|f\|_{C^{m}(\overline{\Omega})},\\
 \|S_h f\|_{C^{2m}(\reals^d)} &\le& C h^{-m} \|f_e\|_{C^{m}(\reals^d)} \le C h^{-m} \|f_e\|_{C^{m}(\overline{\Omega})}
 \end{eqnarray*}
 and, for $m<s<2m$, 
  $$
  \|S_h f\|_{C^{s}(\reals^d)} 
  \le C h^{m+\epsilon-s} \|f_e\|_{C^{m+\epsilon}(\reals^d)} \le 
  C h^{m+\epsilon-s} \|f\|_{C^{m+\epsilon}(\overline{\Omega})}.
  $$
  In this case, we apply Theorem \ref{Main_Rep_Theorem} to $S_hf$ as follows. 
  For any $2d/\epsilon<p<\epsilon$, the Sobolev embedding theorem guarantees the embedding 
  $H_p^{\epsilon/2}(\partial \Omega) \subset L_{\infty}(\partial \Omega)$. 
  Applying this to $N_j S_h f$ gives
   $
 \|N_j S_h f\|_{L_{\infty}(\partial \Omega)} 
 \le 
C  \|N_j S_h f\|_{H_p^{\epsilon/2}(\partial \Omega)} 
 $.
By Theorem \ref{Main_Rep_Theorem},  $ \|N_j S_h f\|_{L_{\infty}(\partial \Omega)} \le C \|f\|_{B_{p,1}^{2m-j-1+\epsilon/2}({\Omega})}$ holds.
Finally, we have the  embedding $C^{\sigma}(\overline{\Omega}) \subset B_{p,\infty}^{\sigma}(\Omega)$, valid for any $\sigma>0$ by the compactness
of $\Omega$. 
This gives $C^{2m-j-1+\epsilon}(\overline{\Omega})\subset B_{p,1}^{2m-j-1+\epsilon/2}({\Omega})$. 
and, therefore,
 $$
 \|N_j S_h f\|_{L_{\infty}(\partial \Omega)} 
 \le 
 C \|S_h f\|_{C^{2m-j-1+\epsilon}( \Omega)}.
 $$ 
In particular this holds for $s = 2m-j-1+\epsilon$, which satisfies $m<s<2m$.
 
  Applying $T_{\Xi}$ to the restriction $S_h f\left|_{\overline{\Omega}}\right.$, Lemma \ref{approximation_by_operator} and the above estimates ensure that 
 $$\| S_h f\left|_{\overline{\Omega}}\right.- T_{\Xi}(S_h f\left|_{\overline{\Omega}}\right.)\|_{L_{\infty}(\Omega)} \le Ch^{m} \|f_e\|_{C^{m+\epsilon}(\overline{\Omega})}$$ 
 and 
 \begin{equation}
 \|f - T_{\Xi}(S_hf\left|_{\overline{\Omega}})\right.\|_{L_{\infty}(\Omega)} 
 \le
 C h^{m} \|f\|_{C^{m+\epsilon}(\overline{\Omega})}   
  \end{equation}
 from which the theorem follows.
\end{proof}
%%%%%%%%%%

%%%%%%%%%%
\subsection{Overcoming boundary effects}
We now demonstrate that the ``free space'' approximation order of $2m$ can be attained by increasing the density of centers in a small neighborhood of the boundary. 
This approach was shown to be successful in \cite{Hdisk}, and is
similar to  quadratic oversampling used by Rieger and Zwicknagl \cite{RZ}.
We now describe how to modify the error estimate from Lemma \ref{approximation_by_operator}, 
to respond to oversampling near the boundary.

We add an extra assumption about $\Xi$ -- namely that the sampling density of $\Xi$ near the boundary is $h^{\nu}$ rather than $h$. 
In this case ``near'' means within a tube which has thickness $\propto h^{\nu}$.

To proceed, we fix an ``oversampling factor'' $\nu\ge1$. 
By the smoothness and compactness of the boundary,
$\Omega$ satisfies an interior cone condition.
Indeed, for every aperture $0\le \theta< \pi/2$
there is a  radius $r$ so that
for every $\alpha\in \partial \Omega$, 
the cone $C(\alpha, r,\theta,-\vec{n}_{\alpha})$
lies in  $\overline{\Omega}$.

It follows from \cite[Theorem 3.8]{Wend} 
that if %the set 
$\Omega_{h,\nu} = \{\xi\in\Omega \mid \dist(\xi,\partial \Omega) \le 12 h^{\nu} m^2\}$
satisfies the estimate
$\max_{x\in \Omega_{h,\nu} }\dist(x,(\Xi)) \le h^{\nu}$  
then for every  $\alpha \in \partial \Omega$
the boundary cone $C(\alpha) =  C(\alpha,\Gamma (2m)^2h^{\nu}, \theta, -\vec{n}_{\alpha})$,  
has the norming set property:
$$
\forall p\in \Pi_{2m}, 
\forall \alpha \in \partial \Omega, \ 
\quad 
    \|p\|_{L_{\infty}(C(\alpha))}
    \le 
    2 \left\|p\left|_{\Xi\cap C(\alpha)}\right.\right\|_{\ell_{\infty}(\Xi\cap C(\alpha))}.
$$
As in Section \ref{SS:Error_kernels}, we have that
$|\Lambda_jp(\alpha) |\le C_2\|p\left|_{\Xi\cap C(\alpha)}\right.\|_{\ell_{\infty}}$.
This is sufficient to ensure that  boundary kernels $a_j:\partial \Omega\times \Xi\to \reals$ 
exist 
so that the following three properties hold.
Namely, 
\begin{enumerate}
\item 
$\sum_{\xi\in\Xi}a_j(\alpha,\xi) p (\xi) = \lambda_{j}p(\alpha)$ for all $p\in \Pi_{2m}$,
 \item %(support) 
 $|\alpha - \xi| >\Gamma (2m)^2 h^{\nu}$ implies $a_j(\alpha,\xi) = 0$ 
\item %(stability) 
$\max_{\alpha\in \partial \Omega} \sum_{\xi\in\Xi} |a_j(\alpha,\xi)| \le K h^{-\nu j}$, with $K$ depending only on $m$ and $\Omega$.
\end{enumerate}
Consequently,
$$E_j(x,\alpha) = |\lambda_{j,\alpha}\phi(x-\alpha) - k_j(x,\alpha)| \le 
C h^{\nu(2m-d-j)} \left(1+\frac{\dist(x,\alpha)}{h^{\nu}}\right)^{-(d+1)}$$
and the corresponding operator has norm
$\|\mathcal{E}_j\|_{p\to p} \le Ch^{\nu(2m-j-1+1/p)}$. This ensures the following theorem
 %%%%%
 %
 %: approximation_by_operator
 %
 %%%%%
 \begin{theorem} \label{overcoming}
  There are positive constants $h_0$ and $C$ (depending on $\Omega$ and $m$) so that for all 
  $\Xi$ with fill distance $h\le h_0$,
  and satisfying the extra condition 
   $\max_{x\in \partial \Omega }\dist(x,(\Xi\cap\Omega_{h,\nu})) \le h^{\nu}$,
if $f\in W_p^{2m}(\Omega)$ (or $C^{2m}(\overline{\Omega})$ in case $p=\infty$)
then
 $$\|f - T_{\Xi} f\|_{L_p(\Omega)} \le C\left(h^{2m}  \|\Delta^m f\|_{L_p(\Omega)}
 +\sum_{j=0}^{m-1} h^{\nu(2m-j-1+\frac1{p})} \| f\|_{W_{p}^{2m}(\Omega)}\right).$$
 \end{theorem}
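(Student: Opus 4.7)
The plan is to mimic the proof of Lemma \ref{approximation_by_operator}, retaining the original (non-oversampled) interior kernel $k(x,\alpha)$ constructed in Section \ref{SS:Error_kernels} but replacing the boundary kernels $k_j(x,\alpha)$ by the oversampled versions whose construction is carried out in the paragraph immediately preceding the theorem. The hypothesis $\max_{x\in\partial\Omega}\dist(x,\Xi\cap\Omega_{h,\nu})\le h^\nu$ is precisely what Wendland's norming-set theorem \cite[Theorem 3.8]{Wend} needs to produce, for each $\alpha\in\partial\Omega$, a coefficient kernel $\xi\mapsto a_j(\alpha,\xi)$ supported in a boundary cone of radius $\Gamma(2m)^2h^\nu$ that reproduces $\lambda_j$ on $\Pi_{2m}$ and has $\ell_1$-norm bounded by $Kh^{-\nu j}$. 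This is the one place where the oversampling hypothesis is used; no other modification of Section \ref{SS:Error_kernels} is required.

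Given the replacement kernels, the resulting error kernels satisfy the pointwise estimate
$$E_j(x,\alpha)\le C\,h^{\nu(2m-d-j)}\Bigl(1+\frac{|x-\alpha|}{h^\nu}\Bigr)^{-(d+1)},$$
so integrating out either argument and applying Schur's test (as in Section \ref{SS:Error_kernels}) gives the operator norm bound $\|\mathcal{E}_j\|_{p\to p}\le Ch^{\nu(2m-j-1+1/p)}$ for $1\le p\le\infty$. The interior error kernel is unchanged, so $\|\mathcal E\|_{p\to p}\le Ch^{2m}$ as before. Applying the integral representation of Theorem \ref{Main_Rep_Theorem}, subtracting $T_\Xi f$, taking $L_p$ norms, and invoking (\ref{Approx_Lp_error_operator_norms}) yields
$$\|f-T_\Xi f\|_{L_p(\Omega)}\le Ch^{2m}\|\Delta^m f\|_{L_p(\Omega)}+C\sum_{j=0}^{m-1}h^{\nu(2m-j-1+1/p)}\|N_j f\|_{L_p(\partial\Omega)}.$$

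The remaining step is to dominate each $\|N_j f\|_{L_p(\partial\Omega)}$ by $\|f\|_{W_p^{2m}(\Omega)}$. This is immediate from Theorem \ref{Main_Rep_Theorem} with $s=0$: $N_j$ is bounded from $B_{p,1}^{2m-j-1+1/p}(\Omega)$ to $L_p(\partial\Omega)$, and since $2m-j-1+1/p\le 2m$ for $j\ge 0$ and $1<p<\infty$, the standard embedding $W_p^{2m}(\Omega)\hookrightarrow B_{p,1}^{2m-j-1+1/p}(\Omega)$ completes the bound (with the obvious substitution of $C^{2m}(\overline{\Omega})\hookrightarrow B_{\infty,1}^{s}(\Omega)$ for any $s<2m$ in the case $p=\infty$).

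I do not anticipate a genuine obstacle here, since the substantive analytic work -- the mapping properties of the trace operators $N_j$ (Theorem \ref{Main_Rep_Theorem}) and the existence of the oversampled polynomial reproduction -- has already been done. The only point worth being careful about is the clean separation of scales: the interior reproduction only needs cones entirely inside $\Omega$ and therefore uses the full fill distance $h$, while the boundary reproduction uses inward-pointing cones at scale $h^\nu$ and is precisely the reason the oversampling hypothesis is imposed on the tube $\Omega_{h,\nu}$ rather than on all of $\Omega$.
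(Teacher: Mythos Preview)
Your proposal is correct and follows the same route as the paper, which simply says the result ``follows from the argument used in Lemma \ref{approximation_by_operator}'' and leaves the details to the reader; you have supplied exactly those details, including the extra step of absorbing $\|N_j f\|_{L_p(\partial\Omega)}$ into $\|f\|_{W_p^{2m}(\Omega)}$, which the paper's statement requires but its one-line proof does not spell out. One small caveat: your parenthetical for $p=\infty$ appeals to $B_{\infty,1}^s$, but Theorem \ref{Main_Rep_Theorem} is stated only for $1<p<\infty$, so at the endpoints you should route through an intermediate $p$ via Sobolev embedding (as the paper does in its $L_1$ and $L_\infty$ approximation theorems) rather than invoke $N_j$ on an $L_\infty$-based Besov space directly.
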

  %%%%%
 %
 %%%%%
 \begin{proof}
 The result follows from the argument used in Lemma \ref{approximation_by_operator}. The details are left to the reader.
 \end{proof}
  %%%%%
 %
 %%%%%
 This indicates how we may ``oversample'' $\Xi$.
 For $1\le p\le \infty$, let $\nu = \frac{2mp}{mp+1}$ (or $\nu =2$ when $p=\infty$). 
 This is the critical exponent that delivers $L_p$ approximation order $2m$.
 Then the lowest order term in Theorem
 \ref{overcoming} is controlled by $h^{2m}$ and
 $$\|f - T_{\Xi} f\|_{L_p(\Omega)} \le C h^{2m}  \| f\|_{W_p^{2m}(\Omega)}.$$
 
\paragraph{Selecting points in $\Omega_{h,\nu}$} 
To accomplish this practically, given a set of centers $\Xi\subset \Omega$ with fill distance $h$,
we  sample points $\Xi_{\partial,0}$ on $\partial \Omega$ with a density of 
 $\max_{x\in \partial \Omega} \dist(x,\Xi_{\partial,0}) =h^{\nu}$. 
 Extend this into $\Omega$ by choosing
 $2m$ layers of the form $\Xi_{\partial,j} = \{\xi^* = \xi + jh^{\nu} \vec{n}_{\xi}\mid \xi \in \Xi_{\partial,0}\}$. In that
 case, we have (for sufficiently small $h$) that $\bigcup_{j=0}^{2m} \Xi_{\partial, j}$ is a norming set for $\Omega_{h,\nu} = \{x\in \Omega\mid
 \dist(x,\partial \Omega)\le 2m h^{\nu}\}$.
 
 \paragraph{When is it feasible?} A set of centers $\Xi\subset \Omega$ has cardinality 
 $\#\Xi \ge C \mathrm{vol}(\Omega) h^{-d}$.  
 Since $\#\Xi_{\partial,j}\sim \#\Xi_{\partial,0}\sim Ch^{-\nu(d-1)}$, 
 the set of additional points $\bigcup_{j=0}^{2m} \Xi_{\partial, j}$ may have cardinality bounded by
 $(\#\Xi_{\partial,0})(2m+1)\le C m h^{-\nu(d-1)}.$
If we desire that the supplementary points do not exceed $C h^{-d}$ asymptotically 
(meaning that the number or extra centers required to achieve approximation order ${2m}$ is kept on par with the number of original centers), then 
for fixed $d$,  the $L_p$ approximation order $2m$  can be achieved for $1\le p\le \frac{d}{(d-2)m}$ without increasing (asymptotically) the number of centers.

\section{Beppo-Levi Extension}\label{S:Ext}
In this section we present a boundary layer representation of Duchon's  
norm minimizing extension operator \cite{D2},
which takes functions in $W_2^m(\Omega)$ to functions in the Beppo-Levi space 
$$D^{-m}L_2(\reals^d) = \{g\in W_{2,loc}^m\mid \forall |\beta|=m,\, D^{\beta}g \in L_2(\reals^d)\}.$$
This is the semi-Hilbert space of functions whose $m$th derivatives are globally $L_2$ equipped with 
 the Beppo-Levi semi-norm 
$$|g |_{D^{-m}L_2(\reals^d)} = 
\left(\sum_{|\alpha| = m} \begin{pmatrix} m\\ \alpha\end{pmatrix} \int_{\reals^d}|D^{\alpha} g(x)|^2 \dif x\right)^{1/2}.$$
Since $m>d/2$, $D^{-m}L_2(\reals^d)\subset C(\reals^d)$, by the Sobolev embedding theorem 
(this follows from the chain of embeddings $D^{-m}L_2(\reals^d)\subset W_{2,loc}^m(\reals^d) \subset C(\reals^d)$).
We consider the extension operator  $E_1 :W_2^m(\Omega)\to D^{-m}L_2(\reals^d): f\mapsto f_e$ 
that minimizes the Beppo-Levi semi-norm 
$$E_1 f =f_e := \mathrm{argmin} \{|g|_{D^{-m}L_2(\reals^d)} \, \mid \,g\left|_{\Omega} \right.= f \}$$
Such extensions can be written as 
$$f_e = \phi*\mu_f +\tp$$ with
$\tp$ a polynomial in $\Pi_{m-1}$ and
$\mu_f$ a distribution supported in $\overline{\Omega}$ that annihilates $\Pi_{m-1}$.
Unfortunately, not much more can be said about $\mu_f$ or $p$.

\subsection{Extension of functions in $W_2^m(\reals^d)$}
For $f\in W_2^m(\reals^d)$, Corollary \ref{Dirichlet_corollary} ensures that the solution to (\ref{Dirichlet}) with $h_k = \lambda_k f\in W_2^{m-k- 1/2}(\partial \Omega)$
satisfies $f_1 = \sum_{j=0}^{m-1} V_j g_j +p$, and that this functions lies in $W_2^m(\Omega)$. Because $g_j \in W_2^{j+1/2-m}(\partial\Omega)$,
we have that $\Lambda_j^{\t}(g_j \cdot \delta_j)\in W_2^{-m}(\reals^d)$, and $f_1 = \sum_{j=0}^{m-1}\Lambda_j^{\t}(g_j \cdot \delta_j)* \phi + p$.

The remainder $f_2 = f-f_1$ satisfies  $\lambda_j(f-f_1) = 0$ for $j=0\dots m-1$. In other words, its Dirichlet data vanishes, 
and the zero extension of $f-f_1$, denoted
$(f-f_1)_z$, lies in $W_2^m(\reals^d)$.
Therefore, $\Delta^m (f-f_1)_z\in W_2^{-m}(\reals^d)$, and it has support in $\overline{\Omega}$.
 
 We define $\nu_f := \Delta^m (f-f_1)_z + \sum_{j=0}^{m-1} \Lambda_j^{\t}(g_j \cdot \delta_j)$, and note that $f\mapsto \nu_f$ is bounded from
 $W_2^m(\Omega)$ to $W_2^{-m}(\reals^d)$. Consequently, $\nu_f*\phi \in W_{2,loc}^m(\reals^d)$.

 To guarantee that  $\nu_f*\phi$ resides in $D^{-m} L_2$, we need to demonstrate a polynomial annihilation property of $\nu_f$. 
 This is done
 below in Lemma \ref{annihilation}.
The result then follows from the
 fact that 
  for $|\alpha|=m$, $D^{\alpha}\nu_f$ is a compactly supported distribution that annihilates polynomials of degree $2m-1$,
  and therefore 
 $D^{\alpha} \nu_f *\phi =(D^{\alpha} \nu_f)*\phi$. 
From Lemma \ref{conv_with_FS} %(\ref{FS_derivative_estimates}) 
we have that $|D^{\alpha} \nu_f *\phi (x)|\le C(1+ |x|)^{-d}$ which shows that $D^{\alpha} \nu_f *\phi (x)\in L_2(\reals^d)$ globally.

\begin{lemma} \label{annihilation}
For $q\in \Pi_{m-1}$, $\langle \nu_f , q\rangle = 0$.
\end{lemma}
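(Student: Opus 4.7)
The plan is to test $\nu_f$ against an arbitrary $q\in\Pi_{m-1}$ by splitting the pairing into its two natural pieces,
$$\langle \nu_f, q\rangle = \langle \Delta^m(f-f_1)_z, q\rangle + \sum_{j=0}^{m-1} \langle \Lambda_j^{\t}(g_j\cdot\delta_{\partial\Omega}), q\rangle,$$
and showing that each of them vanishes.

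For the first term, my approach will be to move $\Delta^m$ onto $q$. This is legitimate because $(f-f_1)_z\in W_2^m(\reals^d)$ is compactly supported in $\overline{\Omega}$, so $\Delta^m(f-f_1)_z$ is a compactly supported element of $W_2^{-m}(\reals^d)$ and pairs freely with any $C^\infty$ function. The resulting $\langle (f-f_1)_z,\Delta^m q\rangle$ will be zero outright because $\deg q\le m-1<2m$ forces $\Delta^m q\equiv 0$.

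For the boundary piece, I would transpose $\Lambda_j^{\t}$ back onto $q$, giving $\langle g_j\cdot\delta_{\partial\Omega}, \Lambda_j q\rangle = \langle g_j,\lambda_j q\rangle_{\partial\Omega}$; the identity $(\Lambda_j q)|_{\partial\Omega}=\lambda_j q$ holds because $\Lambda_j$ is the smooth tubular-neighborhood extension of $\lambda_j$. Summing in $j$, the boundary contribution to $\langle\nu_f,q\rangle$ equals $\sum_{j=0}^{m-1}\langle g_j,\lambda_j q\rangle$, which is precisely $P^{\t}\bfg$ paired with the coefficient vector of $q$ relative to the chosen basis $p_1,\ldots,p_N$ of $\Pi_{m-1}$.

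The one genuinely substantive point — and the reason the augmentation of the integral equation was built in the first place — is that $P^{\t}\bfg=0$ by construction. Tracing back through the proof of Corollary \ref{Dirichlet_corollary}, the auxiliary datum $\bfg$ arises by inverting $\LLs$ against the augmented vector $(0,\ldots,0,h_0,\ldots,h_{m-1})^{\t}$; the first $N$ rows of that system read exactly $P^{\t}\bfg=0$, i.e., $\sum_{j=0}^{m-1}\langle g_j,\lambda_j q\rangle=0$ for every $q\in\Pi_{m-1}$. Combining the two vanishings completes the argument.
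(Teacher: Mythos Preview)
Your proof is correct and follows essentially the same approach as the paper: split $\langle\nu_f,q\rangle$ into the bulk and boundary pieces, kill the bulk term via $\Delta^m q=0$, and kill the boundary sum via the side condition $P^{\t}\bfg=0$. Your write-up is in fact more detailed than the paper's, which compresses the argument to two lines.
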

\begin{proof}
We have that
$
\langle \nu_f,q\rangle
= 
\langle (f-f_1)_z, \Delta^m q\rangle
+ 
 \sum_{j=0}^{m-1} \langle g_j, \lambda_j q\rangle
$.
 Because $q\in \Pi_{m-1}$,
$\Delta^m q = 0$, and
employing the side conditions $P^{\t} \bfg = 0$ shows that the final sum vanishes.
\end{proof}

We are now ready to prove the  main theorem for this section. Its proof is similar to that of Lemma  \ref{oneone}.
\begin{theorem}
For $f\in W_2^{m}(\overline{\Omega})$,
$f_e = \nu_f *\phi+p$.
\end{theorem}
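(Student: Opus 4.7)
The plan is to verify that $u := \nu_f * \phi + p$, for a suitable polynomial $p \in \Pi_{m-1}$, satisfies the two defining properties of the Beppo--Levi minimizing extension: $u|_{\Omega} = f$, and $u$ is orthogonal (in the Beppo--Levi inner product) to $K := \{g \in D^{-m}L_2(\reals^d) : g|_{\Omega} = 0\}$. Since $f_e$ is the unique element of $D^{-m}L_2(\reals^d)$ with these properties (it is the orthogonal projection onto $K^{\perp}$ in the quotient Hilbert space $D^{-m}L_2/\Pi_{m-1}$), verifying them will identify $u$ with $f_e$.

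The crucial step is identifying $\nu_f * \phi$ explicitly on $\Omega$. Split $\nu_f = A + B$, with $A := \Delta^m(f-f_1)_z$ and $B := \sum_{j=0}^{m-1} \Lambda_j^{\t}(g_j \cdot \delta_{\partial\Omega})$; then $B * \phi = \sum_j V_j g_j = f_1 - p_0$, writing $f_1 = \sum_j V_j g_j + p_0$. For the interior part, observe that $A$ enjoys the strong annihilation $A \perp \Pi_{2m-1}$: for any $q \in \Pi_{2m-1}$, $\Delta^m q = 0$, and so $\langle A, q\rangle = \int_{\Omega} (f - f_1)\Delta^m q\, dx = 0$. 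Lemma~\ref{conv_with_FS} then yields the decay $|A * \phi(x)| = O(|x|^{-d})$ at infinity. On the other hand, $\Delta^m(A * \phi) = A = \Delta^m(f-f_1)_z$ globally, so $A * \phi - (f-f_1)_z$ is a tempered polyharmonic distribution on $\reals^d$, hence a polynomial; since $(f-f_1)_z$ has compact support and $A * \phi$ decays at infinity, that polynomial must vanish identically. Hence $A * \phi = (f-f_1)_z$, and so $\nu_f * \phi = (f-f_1)_z + f_1 - p_0$, which restricts on $\Omega$ to $f - p_0$. Choosing $p = p_0$ gives $u|_{\Omega} = f$.

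For the orthogonality, let $g \in C_c^{\infty}(\reals^d)$ with $g|_{\Omega} \equiv 0$. Smoothness then forces all derivatives of $g$ to vanish identically on $\Omega$, so $\lambda_j g = 0$ on $\partial\Omega$ for every $j$ and $\Delta^m g$ vanishes on $\Omega$. Unwinding $\nu_f$ via duality,
\[
\langle \nu_f, g\rangle = \int_{\Omega} (f - f_1)\Delta^m g\, dx + \sum_{j=0}^{m-1} \int_{\partial\Omega} g_j\, \lambda_j g\, d\sigma = 0.
\]
Since $u \in D^{-m}L_2(\reals^d)$, integrating by parts in the Beppo--Levi inner product against the smooth compactly supported $g$ yields
\[
\langle u, g\rangle_{BL} = (-1)^m \langle u, \Delta^m g\rangle = (-1)^m \langle \Delta^m u, g\rangle = (-1)^m \langle \nu_f, g\rangle = 0,
\]
using $\Delta^m u = \nu_f$. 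A density argument approximating an arbitrary $g \in K$ by test functions vanishing on $\Omega$, together with continuity of the Beppo--Levi inner product in its second argument, extends the orthogonality to all of $K$.

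The main obstacle I foresee is the identity $A * \phi = (f-f_1)_z$: it is essential that $A$ inherits the sharper annihilation $A \perp \Pi_{2m-1}$ from its structure $\Delta^m(\cdot)$, since the weaker $\Pi_{m-1}$-annihilation of $\nu_f$ itself (Lemma~\ref{annihilation}) produces decay too slow for the Liouville-type argument to eliminate the polynomial correction. The density step for $K$ is a secondary but nontrivial technicality, as elements of $K$ only vanish on $\Omega$ (not on a neighborhood of $\overline{\Omega}$), so the approximation must respect the trace condition at $\partial\Omega$; this is a standard Sobolev-type approximation but one worth flagging.
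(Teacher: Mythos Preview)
Your proposal is correct in its essentials but takes a genuinely different route from the paper's proof, and the comparison is instructive.

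The paper does not argue variationally. Instead it writes $F := (\nu_f - \mu_f)*\phi + (p-\tilde p)$, where $f_e = \mu_f*\phi + \tilde p$ is Duchon's extension, and shows $F\equiv 0$ directly: $F=0$ on $\Omega$ (both sides restrict to $f$), $F$ is smooth and polyharmonic on $\reals^d\setminus\overline\Omega$ with zero Dirichlet data, and the $\Pi_{m-1}$-annihilation of $\nu_f-\mu_f$ together with Lemma~\ref{conv_with_FS} gives decay $|D^\beta F(x)|\le C(1+|x|)^{m-1-|\beta|}$. This is exactly the radiating condition~(\ref{radiating}), so the exterior uniqueness argument (as in Lemmas~\ref{outer_uniqueness} and the proof of Lemma~\ref{oneone}) forces $F=0$ outside $\Omega$ as well; continuity across $\partial\Omega$ and a Fourier argument then finish.

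Your Step~1 is actually more explicit than the paper on one point: the paper \emph{asserts} that $\nu_f*\phi+p$ restricts to $f$ on $\Omega$ without proof (presumably by density from Theorem~\ref{Main_Rep_Theorem}, which only treats $W_p^{2m}$). Your Liouville-type argument---observing that $A=\Delta^m(f-f_1)_z$ annihilates $\Pi_{2m-1}$, hence $A*\phi$ decays like $|x|^{-d}$ by Lemma~\ref{conv_with_FS}, and that $A*\phi-(f-f_1)_z$ is a tempered polyharmonic distribution vanishing at infinity---is a clean and self-contained justification of this, and is a genuine contribution.

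Where your route costs something is Step~2. The orthogonality $\langle u,g\rangle_{BL}=0$ for test functions $g$ vanishing on $\Omega$ is fine, but the density of such $g$ in $K=\{g\in D^{-m}L_2:g|_\Omega=0\}$ with respect to the Beppo--Levi seminorm is not automatic: elements of $D^{-m}L_2$ can grow polynomially, so truncation at infinity needs the Deny--Lions machinery, and mollification near $\partial\Omega$ must respect the vanishing condition. This is doable but nontrivial. You can sidestep it entirely: once your Step~1 gives $u|_\Omega=f$ and $u\in D^{-m}L_2(\reals^d)$, simply follow the paper's direct comparison with $f_e$ via exterior uniqueness. That route uses only the weaker $\Pi_{m-1}$-annihilation of $\nu_f-\mu_f$ (Lemma~\ref{annihilation}) and avoids any density claim about $K$.
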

\begin{proof}
We write $f_e = \mu_f + \tp$, and
let  $F= (\nu_f-\mu_f)*\phi+p-\tp.$ Note that $F \in D^{-m}L_2(\reals^d)$. 
For $\mu_f* \phi+\tp$ this is clear (it is the least extension in $D^{-m} L_2(\reals^d)$), while for $\nu_f*\phi+p$ it has been shown above.  

Observe that  $\Delta^m F(x) = 0 $ for $x\in \reals^d\setminus \partial \Omega$. 
Indeed, $F =0$ inside $\Omega$, because this is where both extension operators equal $f$.

  We focus on $\reals^d\setminus\overline{ \Omega}$, where $F$ is smooth, thanks to the fact  that  $\nu_f$ and $\mu_f$ are both supported in $\overline{\Omega}$.
Here $F\in W_{2,loc}^m(\reals^d)$  satisfies the $m$-fold Laplace equation $\Delta^m F(x) = 0$  with homogeneous Dirichlet conditions $\lambda_jF =0,$ for $ j=0\dots m-1$.
The polynomial annihilation property $(\nu_f -\mu_f)\perp \Pi_{m-1}$ 
in conjunction with Lemma \ref{conv_with_FS}
implies that 
$D^{\beta} F(x) \le C (1+|x|)^{m-1 -|\beta|}$, which means that $F = 0$ in $\reals^d\setminus \Omega$. 
Since $F\in C(\reals^d)$  this implies hat $F = 0$ throughout $\reals^d$. 

Finally, this implies that $(\nu_f-\mu_f)*\phi\in \Pi_{m-1}$. 
Since $\nu_f-\mu_f$ is supported in $\overline{\Omega}$, 
$\widehat{\nu_f-\mu_f}$ is entire, and simultaneously supported at $\{0\}$. Thus,
$\nu_f=\mu_f$ and $p=\tp$.
\end{proof}
%%%%%
%
%
%
%%%%%%
\subsection{Extension of functions in $W_2^{2m}(\Omega)$}
If $f:\Omega \to \reals$ has greater smoothness, we may be able to say more about the distribution $\nu_f$. 
In that case, we use the extended representation (\ref{Main_Rep}) given by Theorem \ref{Main_Rep_Theorem}.
Namely, we have that 
\begin{equation*}
\nu_f *\phi
=
\left(\bigl(\Delta^m f\bigr)_z
+\sum_{j=0}^{m-1} \bigl(\Lambda_j^{\t}(N_jf\cdot\delta_{\partial\Omega}   )\bigr) \right)*\phi
\end{equation*}
Note that this does not hold in general for $f\in W_2^m(\reals^d)$. 
In particular,  the operators $N_j$ (which have a component the higher order boundary operator $\lambda_{2m-j-1}$),
are only defined on $B_{2,1}^{2m -j-1/2}(\Omega)$.

Note that for $f\in W_2^{2m}(\Omega)$, 
$ \Delta^m f$ is in $L_2(\Omega)$, and its zero extension $(\Delta^m f)_z$ is in $L_2(\reals^d)$. 
Similarly, $N_j f\in L_2(\partial \Omega)$, 
and $N_jf\cdot \delta_{\partial \Omega}\in W_2^{-1/2-\epsilon}(\reals)$ for any $\epsilon>0$.
From this, we obtain $\Lambda_j^{\t} (N_jf\cdot \delta_{\partial \Omega})\in W_2^{-1/2 - j - \epsilon}(\reals^d)\subset W_2^{-m}(\reals^d)$, provided
$\epsilon\le 1/2$ (because $j\le m-1$).

\bibliographystyle{siam}
\bibliography{LayerPots}

\end{document}